\newtheorem{theorem}{Theorem}[section]
\newtheorem{lemma}[theorem]{Lemma}
\newtheorem{corollary}[theorem]{Corollary}
\newtheorem{remark}[theorem]{Remark}
\newtheorem{proposition}[theorem]{Proposition}
\newtheorem{propdef}[theorem]{Proposition-Definition}
\newtheorem{construction}[theorem]{Construction}
\newtheorem{example}[theorem]{Example}
\newtheorem*{claim}{Claim}
\newcommand{\bb}[1]{\mathbb{#1}}
\DeclareMathOperator{\sing}{Sing}
\newcommand{\cal}[1]{\mathcal{#1}}
\newcommand{\Spec}[1]{\text{Spec}#1}
\title{Foliation adjunction}
\author{Paolo Cascini}
\address{Department of Mathematics\\
Imperial College London\\
180 Queen's Gate\\
London SW7 2AZ, UK}
\email{p.cascini@imperial.ac.uk}
\author{Calum Spicer}
\address{Department of Mathematics, King's College London, Strand,
London WC2R 2LS, UK}
\email{calum.spicer@kcl.ac.uk}
\subjclass[2010]{14E30, 37F75}
\begin{document}

\setcounter{tocdepth}{1}

\begin{abstract}
We present an adjunction formula for foliations on varieties and we consider applications
of the adjunction formula to the cone theorem for rank one foliations and 
the study of foliation singularities. 
\end{abstract}
\maketitle

\tableofcontents

\section{Introduction}
Our primary goal is to present an adjunction formula for foliations on 
varieties defined over a field of characteristic zero.

Various special cases of the adjunction formula have appeared in
\cite{Brunella00}, \cite{McQuillan08}, \cite{AD13}, \cite{AD14}, 
\cite{Spicer20}, \cite{CS18}, \cite{CS20} and \cite{ACSS}. 
We will give a treatment of the adjunction formula which
unifies these other cases, and which is in line with the treatment
of the adjunction formula for varieties (see for instance 
\cite{Kollar13}). 

Recall that 
the adjunction formula for varieties relates the canonical divisor of a smooth variety $X$
and the canonical divisor of a smooth codimension one subvariety $D \subset X$
by the linear equivalence \[(K_X+D)\vert_D \sim K_D.\]

Given a foliation $\mathcal F$ of rank $r$ on a smooth variety $X$ and a smooth codimension one
subvariety $D \subset X$ we can define a restricted foliation $\mathcal F_D$ on $D$.
Roughly speaking, the leaves of $\mathcal F_D$ are the leaves of $\mathcal F$ intersected
with $D$.  Thus, $\mathcal F_D$ is a foliation of rank $r-\epsilon(D)$ where 
$\epsilon(D) = 1$ if $D$ is not invariant by $\mathcal F$
and $\epsilon(D) = 0$ otherwise.
The adjunction formula for foliations relates the divisors $(K_{\mathcal F}+\epsilon(D)D)\vert_D$
and $K_{\mathcal F_D}$.  Even if $X$, $D$ and $\mathcal F$ are all smooth, these two divisors
will not in general be linearly equivalent. In analogy with the adjunction formula for singular
varieties (e.g. see \cite[\S 16]{Kollaretal}), we introduce a correction term ${\rm Diff}_D(\mathcal F) \geq 0$, 
called the different.  With this correction term, we have a linear equivalence
\[(K_{\mathcal F}+\epsilon(D)D)\vert_D \sim K_{\mathcal F_D}+{\rm Diff}_D(\mathcal F).\]
We refer to Proposition-Definition \ref{prop_adjunction} for the definition and construction
of the different. Note that a similar statement appeared in \cite[Definition 3.11]{AD14} in the case that $D$ is invariant by $\cal F$. 

In the case where $X$ is a smooth surface and $C$ is a smooth curve, this 
formula has been discussed
in \cite{Brunella00} in slightly different terms.
When $C$ is invariant by $\mathcal F$ the restricted foliation is the foliation with one leaf
and so $K_{\mathcal F_C} = K_C$.
In this case we have by \cite[Proposition 2.3]{Brunella00}
\[K_{\mathcal F}\vert_C \sim K_{\mathcal F_C}+Z(\mathcal F, C)\] where
$Z(\mathcal F, C)$ depends on the singularities of $\mathcal F$ along $C$.
When $C$ is not invariant by $\mathcal F$ the restricted foliation is the foliation by
points and so $K_{\mathcal F_C} = 0$. 
In this case we have by \cite[Proposition 2.2]{Brunella00}
\[(K_{\mathcal F}+C)\vert_C \sim K_{\mathcal F_C}+{\rm tang}(\mathcal F, C)\]
where ${\rm tang}(\mathcal F, C)$ is the tangency divisor between $\mathcal F$ and $C$.

We will see that the correction term, $Z(\mathcal F, C)$ or ${\rm tang}(\mathcal F, C)$,
in each of these cases is equal to the different 
${\rm Diff}_C(\mathcal F)$.  
Assuming that $X$ and $D$ are possibly singular and that $S\to D$ is the normalisation map, 
we will explain the construction of the restricted foliation $\mathcal F_S$ on $S$ and the induced different,
 see Proposition-Definition \ref{prop_adjunction}.  
We will also calculate the different in many cases, see 
Proposition \ref{prop_explicit_calc}.

\medskip

Another important feature of the adjunction formula is that it gives a
way to relate the singularities of $\mathcal F$ to $\mathcal F_D$. As an example
of this type of statement we have the following:

\begin{theorem}[cf. Theorem \ref{thm_adj_sing}]
Let $X$ be a $\mathbb Q$-factorial variety, 
let $\mathcal F$ be a foliation of rank $r$ and let $D$ be a prime divisor which is not 
$\mathcal F$-invariant.
Suppose that $(\mathcal F, D)$
is canonical (resp. log canonical).  
Let $S \rightarrow D$ be the normalisation of $D$.  

Then $(\mathcal F_S, {\rm Diff}_S(\cal F))$
is canonical (resp. log canonical).
\end{theorem}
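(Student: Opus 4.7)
The plan is to translate every divisor $F$ over $S$ into a divisor over $X$ whose behaviour is controlled by the singularity hypothesis on $(\cal F, D)$. I will fix a prime divisor $F$ over $S$, realised by a proper birational morphism $\mu : T \to S$, and aim to show $a(F, \cal F_S, {\rm Diff}_S(\cal F)) \geq -\epsilon(F)$ (resp.\ $\geq 0$) in the log canonical (resp.\ canonical) case.

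The first step is to construct a projective birational morphism $\pi : Y \to X$ such that the strict transform $D_Y \subset Y$ has normalisation admitting a birational morphism to $T$, under which $F$ is realised as an exceptional divisor, and such that $(\cal F_Y)_T$ coincides with $\mu^{-1}\cal F_S$. Since $X$ is $\bb Q$-factorial, such a $Y$ can be obtained by iteratively blowing up centres in $X$ containing the centres of $F$ on the successive strict transforms of $D$, followed by an auxiliary log resolution. Applying Proposition-Definition \ref{prop_adjunction} to $(\cal F_Y, D_Y)$ on $Y$ then yields the adjunction relation
\[(K_{\cal F_Y} + D_Y)\vert_T \sim K_{(\cal F_Y)_T} + {\rm Diff}_T(\cal F_Y).\]

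Writing $K_{\cal F_Y} + D_Y = \pi^*(K_{\cal F} + D) + \sum a_i E_i$, the singularity hypothesis on $(\cal F, D)$ gives $a_i \geq -\epsilon(E_i)$ (resp.\ $a_i \geq 0$). Restricting this identity to $T$ and comparing with the adjunction relation on $S$, the coefficient of $F$ on the left-hand side reads off as $a(F, \cal F_S, {\rm Diff}_S(\cal F))$ plus a non-negative contribution from ${\rm Diff}_T(\cal F_Y)$, which is effective by Proposition \ref{prop_explicit_calc}. The desired bound then follows, once one verifies that $\epsilon(F)$ on $S$ is bounded above by $\epsilon(E)$ for the divisor $E \subset Y$ whose restriction to $T$ contains $F$; this is straightforward, since $\cal F_S$-invariance of $F$ forces $\cal F_Y$-invariance of $E$.

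The main technical obstacle is the compatibility step: one needs to identify ${\rm Diff}_T(\cal F_Y)$ as $\mu^*{\rm Diff}_S(\cal F)$ plus contributions from the exceptional divisors of $\pi$ that restrict non-trivially to $T$. This is essentially a birational functoriality property of the different from Proposition-Definition \ref{prop_adjunction}, and will most naturally be proved in parallel with that construction, in analogy with the variety case of \cite[\S 16]{Kollaretal}. With this functoriality in hand, the argument reduces to bookkeeping driven by the effectivity of the different.
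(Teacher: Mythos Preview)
Your overall strategy---lift a divisor $F$ over $S$ to a divisor $E$ over $X$ and compare discrepancies via adjunction on a model $Y$---is the same as the paper's. However, the proposal contains a genuine gap at the key inequality, and one of your claimed ``straightforward'' facts is false.

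\medskip

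\textbf{The sign of the different goes the wrong way.} Carrying out your bookkeeping explicitly: with $\nu\colon T\to S$ the induced map and $(\cal F_Y)_T=\nu^{-1}\cal F_S$, combining the two adjunction identities gives
\[
a(F,\cal F_S,{\rm Diff}_S(\cal F)) \;=\; \sum_i a_i\, m_F(E_i\vert_T)\;-\;m_F\bigl({\rm Diff}_T(\cal F_Y)\bigr).
\]
Thus the effectivity of ${\rm Diff}_T(\cal F_Y)$ pushes the discrepancy \emph{down}, not up. In the canonical case you get only $a(F,\cal F_S,{\rm Diff}_S(\cal F))\geq -m_F({\rm Diff}_T(\cal F_Y))$, which is useless unless you know $m_F({\rm Diff}_T(\cal F_Y))=0$. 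The paper secures exactly this via Lemma~\ref{lem_weak_resolution}: after further blow-ups of $Y$ (in centres contained in $D_Y$) one may arrange that the strict transform of $F$ is \emph{not} in the support of ${\rm Diff}_{D_Y}(\cal F_Y)$. This lemma is the main technical input and is missing from your sketch; the ``effectivity of the different'' from Proposition~\ref{prop_explicit_calc} does not suffice.

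\medskip

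\textbf{The invariance comparison is false as stated.} Your claim that ``$\cal F_S$-invariance of $F$ forces $\cal F_Y$-invariance of $E$'' fails already in the smooth case: on $\bb A^3_{x,y,z}$ with $\cal F_Y=\langle\partial_x,\partial_y\rangle$ and $D_Y=\{x=0\}$, the restricted foliation is $\langle\partial_y\rangle$; the divisor $E=\{x+z=0\}$ is not $\cal F_Y$-invariant, yet $E\cap D_Y=\{x=z=0\}$ is $\cal F_{D_Y}$-invariant. The paper handles the log canonical comparison differently: once Lemma~\ref{lem_weak_resolution} is in force, $E_S$ lies outside $\sing\cal F_Y$, and if $E_S$ were $\cal F_{S_Y}$-invariant one blows up $Y$ along $E_S$ to extract a $\cal F$-invariant divisor of strictly negative discrepancy, contradicting log canonicity of $(\cal F,D)$. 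This is how the inequality $\epsilon(\cal F_{S_Y},E_S)\geq -a(E,\cal F,D)$ is obtained, not by comparing $\epsilon(F)$ with $\epsilon(E)$ directly.
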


As examples show, see Section \ref{s_failure}, 
the non-invariance of $D$ in the above statement is necessary.

\medskip

Finally, we turn to some applications of the adjunction formula.
Our first application is a proof of the cone theorem for pairs $(\mathcal F, \Delta)$
where $\mathcal F$ is a rank one foliation
(see also 
\cite[Corollary IV.2.1]{bm16} and \cite{McQ05}):

\begin{theorem}[= Theorem \ref{thm_cone}]
Let $X$ be a normal projective variety, let $\cal F$ be a rank one foliation 
	and let $\Delta \geq 0$ so that $K_{\mathcal F}$ and $\Delta$ are $\mathbb Q$-Cartier and 
$(\mathcal F, \Delta)$ is log canonical.

Then there are $\mathcal F$-invariant rational curves $C_1,C_2,\dots$ such that 
\[
0<-(K_{\cal F}+\Delta)\cdot C_i\le 2\dim X\]
and
$$\overline{\rm NE}(X)=\overline{\rm NE}(X)_{K_{\cal F}+\Delta\ge 0}+
\sum_i \mathbb R_+[C_i].$$
\end{theorem}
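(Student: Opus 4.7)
The plan is to follow the classical Kleiman--Mori strategy, with the foliated adjunction formula playing the role that ordinary adjunction plays for $K_X$. Since the cone decomposition itself is a formal consequence of the existence, in every $(K_{\mathcal F}+\Delta)$-negative extremal ray $R$, of an $\mathcal F$-invariant rational curve of bounded length (via the standard local-discreteness and Kleiman separation argument from \cite{Kollar13}), the proof reduces to producing one such curve per ray.

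For a given ray $R$, the crucial observation is that the rank one hypothesis dichotomises every irreducible curve in $X$: either $C$ is $\mathcal F$-invariant, or $C$ is everywhere transverse to $\mathcal F$. In the transverse case, the nonzero map $\nu^{\ast}\mathcal F\to T_{\tilde C}$ induced on the normalisation $\nu\colon\tilde C\to C$ by the derivative of $\nu$ forces $\deg\nu^{\ast}\mathcal F\le 2-2g(\tilde C)$ and hence $K_{\mathcal F}\cdot C\ge 2g(\tilde C)-2\ge -2$, so $-(K_{\mathcal F}+\Delta)$ is bounded on transverse classes. I would then run a foliated Mori bend-and-break inside the space of $\mathcal F$-tangent maps $\mathbb P^1\to X$, following \cite{bm16}, \cite{McQ05} and \cite{Spicer20}, to exhibit an $\mathcal F$-invariant rational curve $C_R$ with $[C_R]\in R$.

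For the length bound on $C_R$, I apply Proposition-Definition~\ref{prop_adjunction} to $\nu\colon\tilde C_R\to C_R$. Since $\mathcal F$ has rank one and $C_R$ is $\mathcal F$-invariant, the restricted foliation $\mathcal F_{\tilde C_R}$ coincides with $T_{\tilde C_R}$, so the adjunction formula specialises to
\[
\nu^{\ast}(K_{\mathcal F}+\Delta)\sim K_{\tilde C_R}+{\rm Diff}_{\tilde C_R}(\mathcal F,\Delta),
\]
where the different is effective by the log canonicity hypothesis on $(\mathcal F,\Delta)$. Negativity of the left-hand side then forces $g(\tilde C_R)=0$, so $C_R$ is rational, and yields $-(K_{\mathcal F}+\Delta)\cdot C_R\le 2$, which is already much sharper than the stated bound $2\dim X$.

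The main obstacle is the production of the invariant rational representative in the first place. Classical Mori bend-and-break has no a priori reason to preserve $\mathcal F$-tangency, so the argument has to be carried out entirely inside the space of maps $\mathbb P^1\to X$ whose derivative factors through $\mathcal F$, and one must show that this space is nonempty and admits enough deformations to break long curves. This is precisely the content of the rank one foliated Mori theory of \cite{bm16}, \cite{McQ05} and \cite{Spicer20}, and is the place where the hypothesis that $\mathcal F$ has rank one (so that ``leaves are curves'' and tangency to $\mathcal F$ coincides with invariance) is genuinely used.
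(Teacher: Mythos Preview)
Your dichotomy and the argument in the ``transverse'' case are not correct. For a non-invariant curve $C$ there is no natural nonzero map $\nu^{*}T_{\mathcal F}\to T_{\tilde C}$; such a map exists precisely when $C$ \emph{is} $\mathcal F$-invariant (this is essentially the content of invariance). So the bound $K_{\mathcal F}\cdot C\ge -2$ for transverse curves is unjustified. In fact on a surface one only has $(K_{\mathcal F}+C)\cdot C\ge 0$ for non-invariant $C$, which gives no uniform lower bound on $K_{\mathcal F}\cdot C$ without controlling $C^2$. You have also not addressed curves lying in $\sing\mathcal F$ (which are automatically invariant but for which the naive adjunction argument is delicate). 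Consequently the key step---showing that every $(K_{\mathcal F}+\Delta)$-negative extremal ray actually contains an $\mathcal F$-invariant curve to which bend-and-break can be applied---is a genuine gap: the cited foliated Mori theory produces rational curves tangent to $\mathcal F$, but only once one has already located a negative \emph{invariant} subvariety carrying the ray.

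The paper's route is quite different and avoids this issue. Rather than arguing curve by curve, one fixes an exposed extremal ray $R$, takes a nef supporting divisor $H_R\sim_{\mathbb R}K_{\mathcal F}+\Delta+H$, and uses Nakamaye's theorem to find a component $W$ of ${\rm Null}(H_R)$ whose cone of curves contains $R$. One then argues by cases on $W$. If $W\subset\sing^{+}\mathcal F$, a separate lemma (using \cite{bm16,McQ05}) shows $(K_{\mathcal F}+\Delta)\cdot C\ge 0$ for every curve $C\subset W$ under the log canonical hypothesis, so this case does not occur. If $W$ is $\mathcal F$-invariant (and not in the singular locus), one applies higher-codimension foliated adjunction to the normalisation $S\to W$ and then the foliated bend-and-break of \cite{Spicer20} on $S$ to produce the required invariant rational curves of bounded degree. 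Finally, the case where $W$ is \emph{not} $\mathcal F$-invariant is eliminated by contradiction: one passes to a model on which an invariant surface germ $\Gamma\supset C$ exists (Lemma~\ref{lem_produce_germ}), restricts to $\Gamma$, and uses surface foliated adjunction together with $C^2<0$ to contradict negativity. Your adjunction computation for the length bound on an invariant rational curve is correct in spirit and indeed gives $2$ rather than $2\dim X$, but it only applies once the existence of such a curve in $R$ has been established by the above machinery.
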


In \cite[Theorem 1.2]{CouPer06} a dynamical characterisation of ample line bundles on smooth 
surfaces was provided.
As a consequence of the above theorem we are able to extend this to higher dimensions.

\begin{theorem}[= Corollary \ref{cor_ampleness}]
Let $X$ be a normal projective variety and let $L$ be a $\mathbb Q$-Cartier divisor. 
Suppose that
\begin{enumerate}
\item $L^{\dim X} \neq 0$; 

\item for some $q \in \mathbb Q_{>0}$ there exists a rank one foliation $\mathcal F$ with 
$K_{\mathcal F} \equiv qL$; and 

\item $\mathcal F$ has isolated singularities and $\mathcal F$ admits no
invariant positive dimensional subvarieties.
\end{enumerate}
Then $L$ is ample.
\end{theorem}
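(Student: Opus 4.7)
The plan is to apply the cone theorem (Theorem~\ref{thm_cone}) to $(\mathcal F,0)$, use hypothesis (3) to conclude that $K_{\mathcal F}$, and hence $L$, is nef, and then upgrade nefness to ampleness using the bigness input coming from (1).

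First I would verify the input to the cone theorem. Since $K_{\mathcal F}\equiv qL$ with $L$ a $\mathbb Q$-Cartier divisor, $K_{\mathcal F}$ is itself $\mathbb Q$-Cartier. To ensure $(\mathcal F,0)$ is log canonical, I would rely on the fact that a rank one foliation with only isolated singularities is automatically log canonical; this can be verified on a log resolution of the zero locus of a local defining vector field, or by invoking the classification of isolated rank one foliation singularities.

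With these hypotheses satisfied, the cone theorem gives
\[
\overline{\mathrm{NE}}(X)=\overline{\mathrm{NE}}(X)_{K_{\mathcal F}\ge 0}+\sum_i\mathbb R_+[C_i]
\]
with each $C_i$ an $\mathcal F$-invariant rational curve. Hypothesis (3) rules out any $\mathcal F$-invariant positive-dimensional subvariety, so no such $C_i$ exist and the sum is empty. Hence $K_{\mathcal F}$ is nef, and numerical proportionality with $q>0$ makes $L$ nef as well; together with (1), $L$ is big and nef.

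To promote nef-and-big to ample is the most delicate step, and I expect it to be the main obstacle. By the Nakai--Moishezon criterion, it suffices to rule out a positive-dimensional subvariety $V\subset X$ with $L^{\dim V}\cdot V=0$. In the basic case $\dim V=1$, such an irreducible curve $C$ with $L\cdot C=0$ satisfies $K_{\mathcal F}\cdot C=0$ and, by (3), is not $\mathcal F$-invariant. On a surface one reaches a contradiction by combining Hodge index ($C^2<0$) with the rank one case of adjunction (Proposition-Definition~\ref{prop_adjunction}), which in the non-invariant case gives $K_{\mathcal F}\cdot C+C^2=\deg\mathrm{Diff}_C(\mathcal F)\ge 0$, exactly as in \cite[Theorem~1.2]{CouPer06}. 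In higher dimensions one reduces to this surface situation by cutting down with a very general complete intersection surface $S$ through $C$: bigness of $L$ ensures $(L|_S)^2>0$ so that Hodge index applies on $S$, and hypothesis (3) ensures the intermediate hyperplane sections can be chosen non-$\mathcal F$-invariant so that the rank one adjunction formula continues to contribute the required non-negativity. The higher-codimensional cases $\dim V\geq 2$ are reduced similarly by intersecting $V$ with general hyperplane sections down to the curve case.
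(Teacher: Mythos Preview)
There are two genuine gaps.

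\textbf{Log canonicity does not follow from isolated singularities.} The claim that a rank one foliation with isolated singularities is automatically log canonical is false: already on $\mathbb A^2$ the vector field $x^2\partial_x+y^2\partial_y$ has an isolated zero but vanishing semi-simple part, hence is not log canonical by \cite[Fact~I.ii.4]{McQP09}, and the same behaviour persists in every dimension. The nefness step is salvaged by using Theorem~\ref{thm_cone} in its full form, which carries the extra subcone $Z_{-\infty}$ supported on non-log canonical centres. One then argues that $Z_{-\infty}=0$: by Lemma~\ref{l_terminal} any non-lc centre of $(\mathcal F,0)$ is either $\mathcal F$-invariant or contained in $\sing^+\mathcal F$, and hypothesis~(3) forces both possibilities to be zero-dimensional. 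This is how the paper obtains nefness.

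\textbf{Cutting by hyperplanes kills the rank of the foliation.} Your reduction of the Nakai--Moishezon step to a complete-intersection surface $S$ through $C$ cannot work as written. A hypersurface $H$ that is not $\mathcal F$-invariant satisfies $\epsilon(H)=1$, so by Proposition-Definition~\ref{prop_adjunction} the restricted foliation $\mathcal F_H$ has rank $r-1=0$; after a single cut there is no longer a rank one foliation on which the inequality $K_{\mathcal F}\cdot C + C^2 = \deg\mathrm{Diff}_C(\mathcal F)\ge 0$ makes sense. The paper instead appeals to Corollary~\ref{cor_null}, whose proof (following the final case of Theorem~\ref{thm_cone}) passes through an $\mathcal F$-\emph{invariant} analytic surface germ $\Gamma\supset C$ supplied by Lemma~\ref{lem_produce_germ}. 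Because $\Gamma$ is invariant, the restricted foliation $\mathcal F_\Gamma$ keeps rank one, and on its normalisation one can legitimately run the surface adjunction argument to derive the contradiction. Corollary~\ref{cor_null} then shows that every component of ${\rm Null}(K_{\mathcal F})$ is $\mathcal F$-invariant or lies in $\sing^+\mathcal F$; hypothesis~(3) makes both impossible in positive dimension, so ${\rm Null}(K_{\mathcal F})=\emptyset$ and Nakai--Moishezon gives ampleness.
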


In fact, \cite{CouPer06} proves a converse to the above theorem: if $L$ is an ample $\mathbb Q$-Cartier divisor, $n \gg 0$ is sufficiently
divisible and 
$\mathcal F$ is a general foliation with $K_{\mathcal F} \equiv nL$, then $\mathcal F$ admits no invariant positive dimensional subvarieties
and has isolated singularities.

%

Furthermore, we consider 
the study of singularities of foliations with a non-trivial algebraic part:

\begin{theorem}[cf. Theorem \ref{thm_alg_almost_hol}]
Let $X$ be a $\mathbb Q$-factorial klt projective variety and let $\mathcal F$ be a foliation with canonical singularities.

Then the algebraic part of $\mathcal F$ is induced by an almost holomorphic map.
\end{theorem}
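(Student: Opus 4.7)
The plan is the following. Let $\mathcal{G} \subseteq \mathcal{F}$ denote the algebraic part of $\mathcal{F}$, and let $\phi : X \to Y$ be the associated dominant rational map whose general fibre is the closure of a leaf of $\mathcal{G}$; the goal is to show that $\phi$ is almost holomorphic. I argue by contradiction. Take a birational model $p : \tilde{X} \to X$ which resolves $\phi$ to a proper morphism $q := \phi \circ p : \tilde{X} \to Y$ and for which $\tilde{X}$ is smooth, with the transformed foliation $\tilde{\mathcal{F}}$ on $\tilde{X}$ having algebraic part $\tilde{\mathcal{G}}$ whose general leaves coincide with the general fibres of $q$. If $\phi$ fails to be almost holomorphic, there exists a $p$-exceptional prime divisor $E \subset \tilde{X}$ with $q(E) = Y$. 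For general $y \in Y$, the fibre $F := q^{-1}(y)$ is smooth and irreducible, $p|_F$ is birational onto the closure $L_y$ of a general leaf of $\mathcal{G}$, and $F \cap E$ is a non-empty effective divisor on $F$ whose image lies in $p(E)$, a proper closed subset of $L_y$.

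Next I would apply the foliated adjunction of Proposition-Definition~\ref{prop_adjunction}. Treating first the case $\mathcal{G} \subsetneq \mathcal{F}$: since $F$ is tangent to $\tilde{\mathcal{G}}$ but not $\tilde{\mathcal{F}}$-invariant, the index $\epsilon(F) = 1$ and
\[
(K_{\tilde{\mathcal{F}}} + F)|_F \sim K_{\tilde{\mathcal{F}}_F} + {\rm Diff}_F(\tilde{\mathcal{F}}).
\]
The canonicity of $\mathcal{F}$ gives the discrepancy formula $K_{\tilde{\mathcal{F}}} = p^* K_{\mathcal{F}} + \sum_i a_i E_i$ with all $a_i \geq 0$. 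Substituting, the right hand side of the adjunction becomes a pullback from $X$ plus non-negative contributions from those $E_i$ meeting $F$, including a genuinely effective term supported on $F \cap E$. Theorem~\ref{thm_adj_sing} applied to the non-invariant divisor $F$ then yields that the restricted pair $(\tilde{\mathcal{F}}_F, {\rm Diff}_F(\tilde{\mathcal{F}}))$ is canonical on the normalisation of $F$.

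The contradiction comes from comparing this restricted pair with the foliated adjunction applied directly on $X$ to the leaf closure $L_y$: pushing the relation on $F$ forward through $p|_F$ should agree with the corresponding relation on $L_y$, but the effective contribution from $F \cap E$ lives only on the $\tilde{X}$ side (since $E$ is $p$-exceptional), and its positivity violates the canonicity of the restricted pair. A more conceptual incarnation of the same obstruction is that such an $E$ with $q(E) = Y$ would let us enlarge $\mathcal{G}$ by attaching $p(E)$ to the leaf closures, contradicting the maximality of the algebraic part. The residual case $\mathcal{G} = \mathcal{F}$, where $F$ is $\tilde{\mathcal{F}}$-invariant, $\epsilon(F) = 0$ and $K_{\tilde{\mathcal{F}}_F} = K_F$, is handled by a parallel argument using $K_{\tilde{\mathcal{F}}}|_F \sim K_F + {\rm Diff}_F$.

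The hard part is the final contradiction step: precisely matching the discrepancies of $\tilde{\mathcal{F}}$ along the chosen exceptional $E$ with the foliated different on $F$, and deducing that the canonicity of the restricted pair via Theorem~\ref{thm_adj_sing} rules out a positive contribution from $F \cap E$. The leaf closure $L_y \subset X$ may itself be singular even under the klt/canonical hypotheses, so one must invoke the normalisation-based form of the adjunction from Proposition-Definition~\ref{prop_adjunction} and carefully track how the different is generated along $F \cap E$ when pushed forward through $p|_F$.
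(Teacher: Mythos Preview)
Your proposal has a genuine structural gap and misses the key mechanism of the proof.

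First, the codimension issue: Proposition-Definition~\ref{prop_adjunction} and Theorem~\ref{thm_adj_sing} both require $D$ to be a prime \emph{divisor}. Your $F = q^{-1}(y)$ has codimension $\dim Y$ in $\tilde X$, which is typically larger than one, so you cannot write $(K_{\tilde{\mathcal F}}+F)\vert_F$ with $\epsilon(F)=1$ as an instance of the adjunction formula. Proposition-Definition~\ref{prop_inv_subadj_analytic} does handle higher codimension, but only for \emph{invariant} centres, and in your main case $\mathcal G\subsetneq\mathcal F$ the fibre $F$ is not $\tilde{\mathcal F}$-invariant.

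Second, even granting some iterated adjunction, your contradiction does not close. From canonicity of $\mathcal F$ you only get $a_i\geq 0$, not $a_i>0$; so there is no ``genuinely effective term supported on $F\cap E$'' unless you first prove $a(E,\mathcal F)>0$, which you have not. The paper's argument produces negativity from the \emph{other} foliation: it uses the family-of-leaves model $\beta\colon\widehat X\to X$ of Construction~\ref{c_fam}, where Lemma~\ref{lem_neg_disc} (due to Druel) gives $a(E,\mathcal H)<0$ for any $\beta$-exceptional divisor $E$ that is not $\widehat{\mathcal H}$-invariant, in particular for an $E$ dominating the base. The bridge to $\mathcal F$ is then made by the identity $K_{\widehat{\mathcal H}}+\widehat f^{\,*}K_{\widehat{\mathcal G}}+R(\widehat f)_{\mathrm{n\text{-}inv}}=K_{\widehat{\mathcal F}}$ (Lemma~\ref{lem_comparison_ram}), the pseudo-effectivity of $K_{\widehat{\mathcal G}}$ for the purely transcendental quotient (Campana--P\u aun), and the negativity lemma; together these force $a(E,\mathcal F)\leq a(E,\mathcal H)<0$, contradicting canonicity. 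None of these ingredients --- Druel's discrepancy lemma, the ramification comparison, or the Campana--P\u aun input --- appear in your outline, and they are what actually drives the result. Your alternative ``conceptual'' remark, that one could enlarge $\mathcal G$ by attaching $p(E)$ to leaf closures, does not yield a foliation and hence does not contradict maximality of the algebraic part.
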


We expect the Minimal Model Program
to have interesting implications for the study of foliation singularities.
Indeed, following \cite[Theorem 1.6]{CS18} and \cite[Lemma 2.8]{CS20}, we believe that there is a close
relation between the classes of singularities of the Minimal Model Program
and the dicriticality properties of the foliation.  In particular, we expect 
that canonical singularities satisfy some suitable non-dicritical condition.
Theorem \ref{thm_alg_almost_hol} is a partial confirmation of this 
in the case of foliations with non-trivial algebraic part (see \cite[Conjecture 4.2]{CS23} in the case of algebraically integrable foliations). 

Building off of work of \cite{Ou21} this theorem has implications for the study 
of foliations where $-K_{\mathcal F}$ is nef.

\begin{corollary}[cf. Corollary \ref{cor_main}]
Let $X$ be a smooth projective variety and let $\mathcal F$ be a 
foliation with canonical 
singularities.
Suppose that $-K_{\mathcal F}$ is nef and is not numerically trivial.

Then the algebraic part of $\mathcal F$ is induced by an equidimensional fibration.
\end{corollary}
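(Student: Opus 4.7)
The plan is to combine Theorem \ref{thm_alg_almost_hol} with the results of Ou21 on foliations whose anticanonical class is nef. Since $X$ is smooth, it is $\mathbb Q$-factorial and klt, so Theorem \ref{thm_alg_almost_hol} applies directly: the algebraic part $\mathcal F_{\mathrm{alg}}$ of $\mathcal F$ is induced by an almost holomorphic map $\pi \colon X \dashrightarrow Y$ onto some normal projective variety $Y$. Hence the problem is reduced to upgrading $\pi$ from almost holomorphic to an actual equidimensional morphism.

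The next step is to verify that $\mathcal F_{\mathrm{alg}}$ inherits the required hypotheses from $\mathcal F$. Using the short exact sequence $0 \to \mathcal F_{\mathrm{alg}} \to \mathcal F \to \mathcal F/\mathcal F_{\mathrm{alg}} \to 0$ on the locus where both foliations are smooth, one has $K_{\mathcal F} = K_{\mathcal F_{\mathrm{alg}}} + K_{\mathcal F/\mathcal F_{\mathrm{alg}}}$. Pseudo-effectivity of $K_{\mathcal F/\mathcal F_{\mathrm{alg}}}$ for the purely transcendental quotient (of Campana--P\u{a}un type) together with nefness and non-triviality of $-K_{\mathcal F}$ then gives that $-K_{\mathcal F_{\mathrm{alg}}}$ is pseudo-effective and non-trivial. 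The canonicity of $\mathcal F_{\mathrm{alg}}$ should similarly descend from that of $\mathcal F$ by restricting the adjunction formula of this paper (Proposition-Definition \ref{prop_adjunction}) to a general leaf of the transcendental quotient.

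With $\mathcal F_{\mathrm{alg}}$ now an algebraically integrable foliation with canonical singularities and non-trivial nef (or pseudo-effective) anticanonical class, I would invoke the main results of Ou21, whose content in this setting is precisely that such a foliation, if induced by an almost holomorphic map, must in fact be induced by an equidimensional morphism. Applied to $\pi$, this gives the desired conclusion.

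The main obstacle — and the content of the Ou21 input — is ruling out degenerate fibres of $\pi$. The natural strategy is by contradiction: a component $E$ of a fibre of dimension strictly larger than $\mathrm{rank}\, \mathcal F_{\mathrm{alg}}$ would be $\mathcal F_{\mathrm{alg}}$-invariant, and the adjunction formula applied to $E$ should constrain $K_{\mathcal F}|_E$ (via $K_{\mathcal F_{\mathrm{alg}}}|_E$) in such a way that, combined with the canonicity hypothesis limiting the different, one produces a curve $C \subset E$ with $(-K_{\mathcal F}) \cdot C < 0$, contradicting the nefness of $-K_{\mathcal F}$. Carrying out this computation — effectively localising the positivity obstruction onto special fibres — is the technical heart of the argument.
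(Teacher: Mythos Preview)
Your overall architecture matches the paper's: apply Theorem \ref{thm_alg_almost_hol} to get an almost holomorphic map, then feed the algebraic part into \cite{Ou21}. But there are two genuine gaps.

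First, and most importantly, you only obtain that $-K_{\mathcal F_{\mathrm{alg}}}$ is \emph{pseudo-effective}, whereas \cite[Corollary 1.4]{Ou21} requires $-K_{\mathcal H}$ \emph{nef}. The paper closes this gap by invoking \cite[Claim 4.3]{MR3935068} to show that in fact $K_{\mathcal F} \equiv K_{\mathcal H}$ numerically, so that nefness of $-K_{\mathcal F}$ passes directly to $-K_{\mathcal H}$. Your decomposition $K_{\mathcal F} = K_{\mathcal H} + K_{\mathcal F/\mathcal H}$ together with pseudo-effectivity of $K_{\mathcal F/\mathcal H}$ only yields $-K_{\mathcal H} = -K_{\mathcal F} + K_{\mathcal F/\mathcal H}$ pseudo-effective, which is not enough. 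The numerical triviality of the transcendental piece is the actual content needed here, and it does not follow from Campana--P\u{a}un alone.

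Second, your attempt to establish canonicity of $\mathcal F_{\mathrm{alg}}$ is both unnecessary and flawed. It is unnecessary because \cite[Corollary 1.4]{Ou21} does not require the foliation to have canonical singularities, only that $X$ be smooth and $-K_{\mathcal H}$ nef. It is flawed because ``restricting the adjunction formula to a general leaf of the transcendental quotient'' makes no sense: the transcendental quotient is purely transcendental, so it has no algebraic leaf along which one could restrict. Finally, you also omit the preliminary observation (via \cite[Corollary 4.13]{CP19}) that the algebraic part is non-trivial in the first place; without this, the statement that $\mathcal H$ is induced by an almost holomorphic map is vacuous.
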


\subsection{Acknowledgements} We would like to thank Mengchu Li, Jihao Liu, James M\textsuperscript cKernan and Stefania Vassiliadis for many useful discussions. 
We are grateful to the referee for carefully reading the paper and for several useful suggestions and corrections.
Both the authors are partially funded by EPSRC.

\section{Preliminaries}
All our schemes
 are Noetherian, pure dimensional 
and separated over an uncountable algebraically closed field $K$ of characteristic zero, unless otherwise specified.
The results here hold equally well for algebraic spaces
and for complex analytic varieties. We will use the fact that many results in the Minimal Model Program for algebraic varieties have been recently generalised to  projective morphisms between complex analytic spaces
 (see \cite{DHP24, Fujino22, LM22}).

\subsection{Line and divisorial sheaves}
Let $X$ be a
not necessarily reduced $S_2$ scheme.  A {\bf line sheaf} $L$ on $X$ is 
a coherent rank one $S_2$ sheaf such that there exists a closed subscheme 
$Z \subset X$ of codimension at least two
such that $L\vert_{X \setminus Z}$ is locally free. 

We recall that if $E$ is a coherent $S_2$ sheaf, $Z \subset X$ is a closed subscheme of 
codimension at least two and $i\colon X \setminus Z \rightarrow X$ is the inclusion 
then the natural morphism $E \rightarrow i_*E\vert_{X \setminus Z}$ is an isomorphism 
\cite[Chapter III, Exercise 3.5]{Hartshorne77}. 
Moreover, by \cite[Proposition 51.8.7]{stacks-project} if $Z \subset X$ is a closed subscheme of codimension at least two, 
$i\colon X \setminus Z \rightarrow X$ is the inclusion  
and   $L$ is a locally free sheaf on $X \setminus Z$ then $i_*L$ is a coherent sheaf.  
It is immediate that $i_*L$ is an $S_2$ sheaf. Given an  integer $n$, we may define the line sheaf $L^{[n]}$ to be $i_*(L\vert_{X\setminus Z}^{\otimes n})$
where $i\colon X\setminus Z \rightarrow X$ is the inclusion.

We denote by  ${\rm LSh}(X)$ the group of isomorphism classes of such sheaves
and we define ${\rm LSh}(X)_{\mathbb Q} \coloneqq {\rm LSh}(X)\otimes \mathbb Q$
to be the group of $\mathbb Q$-line sheaves.

If $X$ is reduced, a {\bf divisorial sheaf} on $X$ is the data of a line sheaf $L$ together with a choice
of an embedding $L \rightarrow K(X)$ 
and we denote the group of isomorphism classes of such sheaves by ${\rm WSh}(X)$.  
We likewise define the group of $\mathbb Q$-divisorial sheaves ${\rm WSh}(X)_{\mathbb Q}$.

Consider a scheme $X$ and a $\mathbb Q$-line sheaf $L$.  Let $Z \subset X$ be a codimension
two subscheme and let $n>0$ be an integer such that $L^{[ n]}\vert_{X\setminus Z}$
is locally free on $X \setminus Z$.
Let $D$ be an $S_2$ scheme and let $f\colon D \rightarrow X$ be a morphism such that
$f^{-1}(Z) \subset D$ is of codimension at least two.
We define the divisorial pullback $f^wL$ to be $\frac{1}{n}j_*(f^*(L^{[n]}\vert_{X\setminus Z}))$
where $j\colon D \setminus f^{-1}(Z) \rightarrow D$ is the inclusion. Note that $f^\omega L$ is a $\mathbb Q$-line sheaf on $D$.
We can likewise define the restriction (and pullback) of a divisorial sheaf.
If $L$ is the sheaf defined by a $\mathbb Q$-Cartier divisor, then these notions all agree with the restriction and pullback
of $\mathbb Q$-Cartier divisors. Similarly, if $G$ is a prime divisor on $X$ such that $mG$ is Cartier on $X\setminus Z$ for some positive integer $m$ and no component of  $f(D)$ is  contained in the support of $G$, then we define $f^wG$ to be $\frac 1 m G'$ where $G'$ is the closure of $f^*(mG|_{X\setminus Z})$ in $D$. By linearity, we can extend the definition to any $\mathbb Q$-divisor on $X$ which is $\mathbb Q$-Cartier on $X\setminus Z$.

\subsection{Integrable distributions and foliations}

Let $X$ be a not necessarily reduced $S_2$ scheme.
A rank $r$ {\bf integrable distribution} $\mathcal F$ on $X$ is the data of a 
\begin{enumerate}
\item a line sheaf $L$; and
\item a Pfaff field, i.e., a morphism $\phi\colon \Omega^r_X \rightarrow L$,
satisfying the following integrability condition:
in some neighbourhood $U$
of the generic point of each irreducible component of 
$X$ there exists a coherent rank $r$ sheaf $E$
and a surjective morphism $q\colon \Omega^1_U \rightarrow E$ such that the $r$-th wedge
power of this morphism agrees with $\phi|_U$ 
and its dual $E^*\hookrightarrow T_U$ is closed under Lie bracket. 
\end{enumerate}

We define the {\bf canonical class} of the integrable distribution $\cal F$ to be any Weil divisor
$K_{\mathcal F}$ on $X$
such that $\mathcal O_X(K_{\mathcal F}) \cong L$.
  A rank $r$ {\bf foliation} on $X$ is a rank $r$ integrable distribution on $X$ whose Pfaff field $\phi$ is such that ${\rm coker}~ \phi$ 
is supported in codimension at least two. 
 Given a rank $r$ integrable distribution $\mathcal F$ on a normal scheme $X$ 
we define the {\bf singular locus} of $\cal F$, denoted $\sing \, {\mathcal F}$, to be the co-support
of the ideal sheaf defined by the image of  the induced map 
$(\Omega^r_X\otimes \mathcal O_X(-K_{\mathcal F}))^{**} \rightarrow \mathcal O_X$.

For foliations on normal varieties, this definition agrees with the usual definition, see
Section \ref{s_normal} below.
Elsewhere in the literature there are differing definitions for foliations
on general schemes, we refer to Section \ref{s_otherdef} for a discussion of this point.
 
 \medskip

Let $X$ be a not necessarily reduced $S_2$ scheme.
We define a {\bf $\mathbb Q$-integrable distribution} of rank $r$ on $X$
to be the data of a line sheaf $L$  on $X$ 
and a non-zero morphism for some $m>0$ 
\[\phi\colon (\Omega^r_X)^{\otimes m} \rightarrow L\]
such that any generic point of $X$ admits a neighbourhood $U$ and
an integrable distribution $\mathcal F$ on $U$ defined by the Pfaff field $\phi_0\colon \Omega^r_U \rightarrow \mathcal O_U(K_{\mathcal F})$
and such that $\phi|_U = \phi_0^{\otimes m}$.
 We say that $m$ is the {\bf index}
of the $\mathbb Q$-integrable distribution.
We will refer to $\mathcal F$ as the {\bf associated integrable distribution}.

 \medskip
We make note of the following:

\begin{lemma}
\label{lem_sat_pfaff}
Let $X$ be an $S_2$ 
scheme, let $E$ be a coherent sheaf on $X$ and let 
$L_1, L_2$ be line sheaves on $X$ 
with morphisms $\psi_i\colon E \rightarrow L_i$ such that
\begin{enumerate}
\item $\psi_1 = \psi_2$ at the generic points of $X$; and

\item ${\rm coker}~ \psi_1$ is supported in codimension at least two.
\end{enumerate}

Then there exists a non-zero morphism $L_1 \rightarrow L_2$.
In particular, if $X$ is normal then there exists a
uniquely defined effective Weil divisor $B$ such that $L_2 = L_1\otimes \mathcal O_X(B)$.

\end{lemma}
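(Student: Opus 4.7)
The plan is to work on a large open subset of $X$ where the relevant sheaves are locally free, build the morphism there by a kernel-containment argument, and then extend across the codimension-two complement using the $S_2$ hypothesis.

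First, I would choose a closed subscheme $Z\subset X$ of codimension at least two such that on $U\coloneqq X\setminus Z$ both $L_1|_U$ and $L_2|_U$ are locally free and $\psi_1|_U$ is surjective; this is possible because $L_1,L_2$ are line sheaves and by hypothesis (2). The key step is to show that $\ker(\psi_1|_U)\subseteq \ker(\psi_2|_U)$ as subsheaves of $E|_U$. Consider the composition
\[
\ker(\psi_1|_U)\hookrightarrow E|_U \xrightarrow{\psi_2|_U} L_2|_U.
\]
By hypothesis (1), this composition vanishes at every generic point of $X$. Since $L_2|_U$ is a line sheaf on an $S_2$ scheme, its associated points are exactly the generic points of $X$, so any coherent subsheaf of $L_2|_U$ with zero stalk at every generic point must vanish. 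Hence the composition is zero, as required.

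Because $\psi_1|_U$ is surjective onto $L_1|_U$, this containment produces a unique morphism $\alpha\colon L_1|_U\to L_2|_U$ satisfying $\psi_2|_U=\alpha\circ\psi_1|_U$. Pushing forward along the inclusion $i\colon U\hookrightarrow X$ and using the isomorphisms $L_j\cong i_*(L_j|_U)$ that follow from the $S_2$ property, one obtains a morphism $L_1\to L_2$ on all of $X$. It is non-zero because at each generic point $\eta$ the stalk $\alpha_\eta$ is, via hypothesis (1), the canonical identification $L_{1,\eta}\cong L_{2,\eta}$.

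For the final assertion, suppose $X$ is normal. The constructed non-zero morphism $L_1\to L_2$ corresponds to a non-zero global section of the rank one reflexive sheaf $\mathcal{H}om(L_1,L_2)$. On a normal scheme the zero locus of such a section is a uniquely determined effective Weil divisor $B$ with $\mathcal{H}om(L_1,L_2)\cong\mathcal{O}_X(B)$, giving the claimed identification $L_2\cong L_1\otimes\mathcal{O}_X(B)$. The main obstacle in the argument is the kernel-containment step; the $S_2$ hypothesis on $X$ is essential there, as it forces $L_2$ to be torsion-free, which is what allows the generic-point identity in (1) to be promoted to an honest inclusion of subsheaves on $U$.
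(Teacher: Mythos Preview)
Your proposal is correct and follows essentially the same approach as the paper: remove a codimension-two locus so that $L_1,L_2$ are locally free and $\psi_1$ is surjective, observe that the image of $\ker\psi_1$ in $L_2$ is torsion and hence zero, and conclude. Your write-up is more explicit about the extension step via the $S_2$ property and about the final Weil-divisor assertion, but the argument is the same.
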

\begin{proof}
We may freely remove subschemes of codimension at least two from $X$ and so we may assume that 
$L_1$ and $L_2$ are locally free and that $\psi_1$ is surjective.  Let $Q$ be the kernel of $\psi_1$.
By item (1) we have that $\psi_2(Q) \subset L_2$ is a torsion subsheaf, and is therefore identically
zero.  Our result then follows.
\end{proof}

\begin{lemma}
\label{lem_uniqueness}
Let $X$ be an $S_2$ scheme, let $\mathcal F^{\circ}$ be an integrable
distribution on $X$ and let $U \subset X$ be a dense open subset. 
\begin{enumerate}
\item If $X \setminus U$ is of codimension at least two  then $\mathcal F^{\circ}$ is 
uniquely determined by its restriction to $U$.

\item  If $\mathcal F^{\circ}$ is a foliation, then it is 
uniquely determined by its restriction
to $U$.  

\item Suppose that $X$ is normal and that $\mathcal G_U$ is a foliation on $U$. Then there exists
a unique foliation $\mathcal G$ on $X$ whose restriction to $U$ is $\mathcal G_U$.

\item Suppose that $X$ is normal.
Then there exists a unique foliation $\mathcal F$
on $X$ which agrees with $\mathcal F^\circ$ at the generic point of $X$. In particular, 
there exists a canonically defined 
effective divisor $B$ such that $K_{\mathcal F}+B \sim K_{\mathcal F^\circ}$.
\end{enumerate}

\end{lemma}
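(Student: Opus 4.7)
The plan is to handle the four parts in order, each resting on the $S_2$ hypothesis on $X$ and on Lemma \ref{lem_sat_pfaff}. Part (1) is formal: a line sheaf is $S_2$ by definition, so if $i\colon U \hookrightarrow X$ denotes the inclusion and $X \setminus U$ has codimension at least two, then $L = i_*(L|_U)$, and the adjunction
\[
\mathrm{Hom}_X(\Omega^r_X, L) \;=\; \mathrm{Hom}_U(\Omega^r_U, L|_U),
\]
together with $i^*\Omega^r_X = \Omega^r_U$, recovers both the line sheaf and the Pfaff field from their restrictions to $U$.

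For part (2), the complement $X\setminus U$ may contain divisorial components, so the adjunction used above does not apply directly. Given two foliations $(L_1,\phi_1)$ and $(L_2,\phi_2)$ whose restrictions to $U$ agree, both Pfaff fields are compatible at the generic points of $X$ and each has cokernel supported in codimension at least two, so Lemma \ref{lem_sat_pfaff} applied in both directions yields morphisms $L_1\to L_2$ and $L_2\to L_1$ that restrict to the identity on $U$. Their composition is an endomorphism of $L_1$ equal to the identity on a dense open, hence on the locally free locus of $L_1$ (whose complement is codimension $\geq 2$), and then everywhere by the $S_2$ property; the resulting isomorphism identifies $\phi_1$ with $\phi_2$.

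For part (3), uniqueness is immediate from (2). For existence, I would extend $\mathcal G_U$ in two steps: first from $U$ to the open $V\subset X$ obtained by deleting the codimension $\geq 2$ components of $X\setminus U$, and then from $V$ to $X$ using (1) (since $X\setminus V$ is codimension $\geq 2$). The first step is where normality enters. At the generic point $\eta$ of each codimension one component of $V\setminus U$, the ring $\mathcal O_{X,\eta}$ is a DVR and, on a smooth neighborhood, $\Omega^r$ is locally free; one defines the extended stalk $L_\eta$ to be the $\mathcal O_{X,\eta}$-submodule of $L_U\otimes K(X)$ generated by the image of $\phi_U$ applied to $\Omega^r_{X,\eta}$. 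This stalk is a rank one free module, the extended Pfaff field is automatically surjective at $\eta$, and the integrability condition, being a condition at the generic points of $X$, is inherited. The local extensions glue because each is determined by the common generic point data.

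For part (4), the integrability axiom provides a neighborhood $U$ of each generic point of $X$ on which $\phi|_U$ factors as $\wedge^r q$ for a surjection $q\colon \Omega^1_U\to E$; in particular $\mathrm{coker}(\phi|_U)=0$, so $\mathcal F^\circ|_U$ is already a foliation. Part (3) then produces a unique foliation $\mathcal F$ on $X$ restricting to $\mathcal F^\circ|_U$, and hence agreeing with $\mathcal F^\circ$ at the generic point of $X$. Lemma \ref{lem_sat_pfaff} next yields a nonzero morphism $L_\mathcal F\to L_{\mathcal F^\circ}$ which, by normality, corresponds to a uniquely defined effective Weil divisor $B$ with $K_\mathcal F + B\sim K_{\mathcal F^\circ}$. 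The main technical step in the whole argument is the codimension one extension in (3); the other verifications are formal consequences of the $S_2$ hypothesis and Lemma \ref{lem_sat_pfaff}.
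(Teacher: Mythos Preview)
Your argument is correct and follows the approach the paper intends: the paper's own proof is the single sentence ``All the items are easy consequences of Lemma \ref{lem_sat_pfaff},'' and your write-up simply unpacks how each item reduces to the $S_2$ extension property and that lemma. The one place you genuinely add content is the existence construction in (3)---extending across the codimension-one locus by taking images in the DVR stalks---which the paper leaves implicit; this is the right idea and is consistent with the saturated-subsheaf viewpoint of Section~\ref{s_normal}.
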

\begin{proof}
All the items are easy consequences of Lemma \ref{lem_sat_pfaff}.
\end{proof}

In Item (4), we will refer to $\cal F$ as 
the foliation induced by the integrable distribution
$\mathcal F^\circ$.

\subsection{Invariant subschemes}
Given an $S_2$ scheme $X$ and a rank $r$ integrable distribution $\mathcal F$ on $X$, we say that an irreducible
subscheme
$W \subset X$ is {\bf $\mathcal F$-invariant} (or simply {\bf invariant} if the integrable distribution is
understood) if $K_{\mathcal F}$ is Cartier at the generic point of $W$
and in a neighbourhood of the generic point of $W$
there is a factorisation

\begin{center}
\begin{tikzcd}
\Omega^r_X\vert_W \arrow[r, "\phi"] \arrow[d] & \mathcal O_W(K_{\mathcal F})\\
\Omega^r_W \arrow[ur] & 
\end{tikzcd}
\end{center}
More generally, we say that a subscheme $W \subset X$ is $\cal F$-invariant if each irreducible component
is invariant.

Given an $S_2$ scheme $X$, an integrable distribution $\mathcal F$  
and a  divisor $D$ on $X$, such that either $D$ is $\cal F$-invariant or  every component of 
$D$ is not $\cal F$-invariant, we define
$\epsilon(\mathcal F, D) \coloneqq 0$ if $D$ is $\mathcal F$-invariant, and $\epsilon(\mathcal F, D) \coloneqq 1$ if each component of $D$ is not $\cal F$-invariant.  
When $\mathcal F$ is clear from context we will write $\epsilon(D)$
in place of $\epsilon(\mathcal F, D)$.

Given any $\mathbb Q$-divisor $D$ on $X$, we denote  $D_{{\rm inv}}$ to be the part of $D$ supported
on invariant divisors and $D_{{\rm n-inv}} \coloneqq D-D_{{\rm inv}}$.

\begin{remark}
The definition above should be compared with \cite[Definition 3.4]{AD14}. Away from $\sing X \cup \sing \mathcal F$ these two definitions agree.
\end{remark}

\subsection{Foliations on normal varieties}
\label{s_normal}
When $X$ is a normal scheme, the above definition of foliation 
is equivalent to the usual definition of a 
foliation $\mathcal F$
in terms of a saturated subsheaf $T_{\mathcal F} \subset T_X$ closed under Lie bracket.
In this case, we may take $U\coloneqq X\setminus (\sing X\cup \sing\, \cal F)$, $E\coloneqq T_{\cal F}^*|_U$ and 
$L \coloneqq \mathcal O_X(K_{\mathcal F})$.
To verify this equivalence we first note that by Lemma \ref{lem_uniqueness} it suffices to verify this equivalence
away from a subvariety of  codimension at least two and, in particular, we may replace $X$ by $U$.
Given the saturated subsheaf $T_{\mathcal F} \subset T_X$ we get a Pfaff field
by considering the morphism
\[\Omega^r_X \rightarrow (\bigwedge^r T_{\mathcal F})^{*} 
\cong \mathcal O_X(K_{\mathcal F})\]
where $r$ is the rank of $\mathcal F$.

Conversely, consider a Pfaff field $\phi\colon \Omega^r_X \rightarrow L$ 
satisfying our integrability condition.  By assumption, over a dense open subset $U$ of $X$, we have 
a surjective morphism  $\Omega^1_U \rightarrow E$ whose dual defines a foliation on $U$, which extends uniquely to a foliation on $X$.

\begin{lemma}
\label{lem_invariant_vector_fields}
Let $X$ be a normal scheme,  let $\mathcal F$ be a rank $r$ foliation on $X$
and let $W \subset X$ be an irreducible subscheme 
such that $T_{\mathcal F}$ is locally free in a neighbourhood of the generic point of
$W$. Let $I_W$ be the ideal of $W$. 

If in a neighbourhood of the generic point of $W$ we have $\partial(I_W) \subset I_W$ for all local sections $\partial \in T_{\mathcal F}$
then $W$ is $\cal F$-invariant.
%
\end{lemma}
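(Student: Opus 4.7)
The plan is to work locally around the generic point of $W$, where by hypothesis $T_\mathcal F$ is free, pick a local frame $\partial_1,\ldots,\partial_r$ of $T_\mathcal F$, and verify the factorisation of the Pfaff field through $\Omega^r_W$ by a direct Leibniz-style computation on exterior forms.

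First, I would shrink $X$ to a Zariski open neighbourhood $U$ of the generic point of $W$ on which $T_\mathcal F$ is free, and choose a basis of local sections $\partial_1,\ldots,\partial_r \in T_\mathcal F(U)$. Each $\partial_i$ satisfies $\partial_i(I_W) \subset I_W$ by assumption. The line sheaf $\mathcal O_X(K_\mathcal F)$ is then trivialised on $U$ by $\partial_1^\ast \wedge \cdots \wedge \partial_r^\ast$, so $K_\mathcal F$ is Cartier at the generic point of $W$, and under this trivialisation the Pfaff field becomes the contraction
\[
\phi\colon \Omega^r_U \longrightarrow \mathcal O_U, \qquad \alpha \longmapsto \alpha(\partial_1,\ldots,\partial_r).
\]

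Next, to establish $\mathcal F$-invariance the aim is to factor $\phi|_W$ through the canonical surjection $\Omega^r_U|_W \twoheadrightarrow \Omega^r_W$. The kernel of this surjection can be identified via the conormal sequence $I_W/I_W^2 \xrightarrow{d} \Omega^1_U|_W \to \Omega^1_W \to 0$: its image in $\Omega^1_U|_W$ is generated by classes $df$ with $f \in I_W$, and a standard wedge-power argument then shows that the kernel of $\Omega^r_U|_W \to \Omega^r_W$ is generated by expressions of the form $df \wedge \omega$ with $f \in I_W$ and $\omega \in \Omega^{r-1}_U|_W$. The key computation is the Leibniz expansion
\[
(df \wedge \omega)(\partial_1,\ldots,\partial_r) \;=\; \sum_{i=1}^r (-1)^{i-1}\partial_i(f)\,\omega(\partial_1,\ldots,\widehat{\partial_i},\ldots,\partial_r),
\]
in which every term lies in $I_W$ since $\partial_i(f) \in \partial_i(I_W) \subset I_W$; hence the expression vanishes modulo $I_W$.

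Consequently $\phi|_W$ descends to a morphism $\Omega^r_W \to \mathcal O_W(K_\mathcal F)$, which is precisely the factorisation required by the definition of $\mathcal F$-invariance. The step needing the most care is the identification of the kernel of $\Omega^r_U|_W \to \Omega^r_W$ via the conormal sequence; once this is in place the remainder is a single application of the defining contraction identity, and no smoothness of $X$ or $W$ beyond the assumed normality and local freeness of $T_\mathcal F$ near the generic point of $W$ is required.
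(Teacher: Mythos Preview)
Your proof is correct and follows essentially the same approach as the paper: localise so that $T_{\mathcal F}$ is free with basis $\partial_1,\dots,\partial_r$, identify the kernel of $\Omega^r_X|_W \to \Omega^r_W$ with the submodule generated by $df\wedge\beta$ for $f\in I_W$, and use the contraction identity to see that $(\partial_1\wedge\dots\wedge\partial_r)(df\wedge\beta)\in I_W$. Your version is slightly more explicit in invoking the conormal sequence and in spelling out why $K_{\mathcal F}$ is Cartier at the generic point of $W$, but the argument is the same.
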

\begin{proof}
Everything is local about the generic point of $W$, so we may freely replace 
$X$ by a neighbourhood of the generic point of $W$ and therefore we may assume that $T_{\mathcal F}$
is locally free.
Let $\partial_1, \dots, \partial_r$ be generators of $T_{\mathcal F}$.

Suppose that  $\partial_i(I_W) \subset I_W$ for all $i$ then 
$(\partial_1 \wedge \dots\wedge \partial_r)(df \wedge \beta)$ vanishes along $W$ for any $f \in I_W$
and any $(r-1)$-form $\beta$.  In particular, 
$(dI_W\wedge \Omega_X^{r-1})\vert_W = \ker (\Omega^r_X\vert_W \rightarrow \Omega^r_W)$
is contained in the kernel of $\Omega^r_X\vert_W \rightarrow \mathcal O_W(K_{\mathcal F})$.
This implies that in a neighbourhood of the generic point of W,
the morphism factors through $\Omega^r_X\vert_W \rightarrow \Omega^r_W$ 
and so $W$ is $\cal F$-invariant.
%
%
%
\end{proof}

When $X$ is smooth the above Lemma is proven in \cite[Lemma 2.7]{AD13}.  Moreover it is shown there that 
if $W$ is $\cal F$-invariant and not contained in the singular
locus of $\mathcal F$ then in a neighbourhood of the generic point of $W$ we have $\partial(I_W) \subset I_W$ for all local sections 
$\partial \in T_{\mathcal F}$.

\subsection{Singularities of foliations from the perspective of the MMP}
We refer to \cite{KM98} for general notions of singularities 
coming from the MMP. 
We refer to \cite[\S 2.4]{CS18} for a recollection on the definition of foliation singularities
from the perspective of the MMP. 
We say that a variety $X$ is {\bf potentially klt} if there exists a $\mathbb Q$-divisor $\Gamma \geq 0$
such that $(X, \Gamma)$ is klt.

If $D$ is a $\mathbb Q$-divisor on a normal variety $X$ and $\Sigma$ is a prime divisor in $X$, 
then we denote by $m_\Sigma D$ the coefficient of $D$ along $\Sigma$.

\begin{lemma}
\label{rmk_1}
Let $X$ be a normal variety, let $\mathcal F$ be a foliation on $X$ and let $\Delta \geq 0$
be a $\mathbb Q$-divisor on $X$ such that $(\mathcal F, \Delta)$ is log canonical.  

Then
no component of $\Delta$ is $\mathcal F$-invariant.
\end{lemma}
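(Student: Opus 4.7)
The plan is to derive an immediate contradiction with the log canonical inequality by computing the foliated discrepancy along the putative invariant component of $\Delta$ itself. Recall that $(\mathcal F, \Delta)$ being log canonical means that for every birational morphism $\pi \colon Y \to X$ and every prime divisor $F$ on $Y$, the discrepancy satisfies
\[
a(F, \mathcal F, \Delta) \;\geq\; -\epsilon(\mathcal F_Y, F),
\]
where $\mathcal F_Y$ denotes the pulled-back foliation and, by definition, $\epsilon(\mathcal F_Y, F) = 0$ precisely when $F$ is $\mathcal F_Y$-invariant.

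First I would take a log resolution $\pi \colon Y \to X$ of $(\mathcal F, \Delta)$. Suppose for contradiction that a prime component $E$ of $\Delta$ with coefficient $c = m_E(\Delta) > 0$ is $\mathcal F$-invariant, and let $\widetilde E \subset Y$ denote its strict transform. The first small observation is that $\widetilde E$ remains $\mathcal F_Y$-invariant: invariance of a prime divisor is a condition on the Pfaff field at the generic point, and $\pi$ restricts to an isomorphism between neighbourhoods of the generic points of $\widetilde E$ and of $E$ which carries $\mathcal F_Y$ to $\mathcal F$. Hence $\epsilon(\mathcal F_Y, \widetilde E) = 0$.

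Next I would read off from the standard defining equation for foliated discrepancies that the strict transform of a component of $\Delta$ contributes exactly minus its coefficient, so that $a(\widetilde E, \mathcal F, \Delta) = -c$. Plugging into the log canonical inequality yields $-c \geq 0$, contradicting $c > 0$. The argument is essentially one line once the conventions are pinned down; the only real bookkeeping obstacle is to confirm against the paper's conventions that (i) invariance is preserved under strict transform along a birational map, and (ii) the foliated discrepancy of the strict transform of a component of $\Delta$ with coefficient $c$ is indeed $-c$. Both are standard and the entire proof then reduces to the observation that $\epsilon(\widetilde E) = 0$ has no positive slack to absorb the coefficient $c$.
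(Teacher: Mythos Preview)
Your argument is correct and is essentially the content of \cite[Remark 2.3]{CS18}, which is all the paper cites for this lemma. One small simplification: the passage to a log resolution is unnecessary---you may take $Y=X$ and work with $E$ itself, since the log canonical condition applies to all prime divisors over $X$, including those already on $X$; then $a(E,\mathcal F,\Delta)=-c<0=-\epsilon(E)$ gives the contradiction directly.
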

\begin{proof} 
It follows immediately from \cite[Remark 2.3]{CS18}.
\end{proof}

\subsection{Algebraically integrable foliations}
A dominant map $\sigma \colon X\dashrightarrow Y$ between normal varieties is called {\bf almost holomorphic} if there exist dense Zariski open subsets $U\subset X$ and $V\subset Y$ such that the induced map $\sigma|_U\colon U\to V$ is a proper morphism. 

Let $\sigma\colon Y\dashrightarrow X$ be a dominant map between normal varieties and let $\mathcal F$ be a foliation of rank $r$ on $X$. We denote by $\sigma^{-1}\mathcal F$ the {\bf induced foliation} on $Y$ (e.g. see \cite[Section 3.2]{Druel21}).  
If $T_{\mathcal F}=0$, i.e. if $\mathcal F$ is the foliation by points on $X$, 
then we refer to $\sigma^{-1}\mathcal F$ as the  {\bf foliation induced by $\sigma$} and we denote it by $T_{X/Y}$. 
In this case, the foliation $\sigma^{-1}\mathcal F$ is called {\bf algebraically integrable}. 

Let $f\colon X\to Z$ be a morphism between normal varieties and let $\mathcal F$ be the induced foliation on $X$. 
If $f$ is equidimensional,
then we define the  {\bf ramification divisor} $R(f)$ of $f$  as 
\[
R(f)=\sum_D (f^*D-f^{-1}(D))
\]
where the sum runs through all the prime divisors of $Z$.  Note that, since $f$ is equidimensional,
the pullback $f^*D$ is well defined even though $D$ might not be $\mathbb Q$-Cartier. In this case, we have
\[K_{\mathcal F} \sim K_{X/Z}-R(f)\] 
(e.g. see  \cite[Notation 2.7 and \S 2.9]{Druel17}).

Let $X$ be a normal variety and let $\cal F$ be a foliation on $X$. Then 
$\mathcal F$ is called {\bf purely transcendental} if there is no positive dimensional algebraic subvariety passing 
through the general point of $X$, which is tangent to $\cal F$. 
In general,  by  \cite[Definition 2.3]{Druel17} (see also \cite[Definition 2]{AD17}) it follows that for any foliation $\cal F$ on $X$ there exists a dominant map 
$\sigma\colon X\dashrightarrow Y$ and a purely transcendental foliation $\cal G$ on $Y$ such that $\cal F=\sigma^{-1}\cal G$. 
 Note that $Y$ and $\cal G$ are unique up to birational equivalence. 
The foliation $\cal H$ induced by $\sigma$ is called the {\bf algebraic part of } $\cal F$.
\section{Adjunction}

\subsection{Lifting derivations on the normalisation}

The goal of this Subsection is to prove the following:

\begin{proposition}
\label{lem_tensor_lift}
Let $X$ be a reduced scheme 
and let $n\colon \tilde{X} \rightarrow X$ be its normalisation.
Suppose that $L$ is a locally free sheaf of rank one on $X$ and that we have a morphism
$\phi\colon (\Omega^r_X)^{\otimes m} \rightarrow L$ for some $r, m \geq 0$.  

Then there is a natural morphism
$\tilde{\phi} \colon (\Omega^r_{\tilde{X}})^{\otimes m} \rightarrow n^*L$ which agrees
with $\phi$ at any generic point of $\tilde{X}$.
\end{proposition}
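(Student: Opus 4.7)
My plan is to reduce to the local affine case and then use Seidenberg's theorem on extending derivations across normalizations in characteristic zero.

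First, since $L$ is locally free of rank one and the claim is Zariski-local, I would work on an affine open with $X=\mathrm{Spec}\,A$ (with $A$ reduced), trivialize $L$, and take $\tilde X=\mathrm{Spec}\,\tilde A$ where $\tilde A$ is the integral closure of $A$ in its total ring of fractions $Q$. The task becomes constructing an $\tilde A$-linear map $\tilde\phi\colon(\Omega^r_{\tilde A})^{\otimes m}\to\tilde A$ extending the given $A$-linear $\phi\colon(\Omega^r_A)^{\otimes m}\to A$ at the generic points. Since $A\hookrightarrow\tilde A$ is an isomorphism at every generic point, $Q(A)=Q(\tilde A)=Q$; base-changing $\phi$ to $Q$ yields $\phi_Q\colon(\Omega^r_Q)^{\otimes m}\to Q$, and composing with the canonical map $(\Omega^r_{\tilde A})^{\otimes m}\to(\Omega^r_Q)^{\otimes m}$ produces a candidate $\tilde\phi_Q\colon(\Omega^r_{\tilde A})^{\otimes m}\to Q$. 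The entire problem reduces to showing that the image of $\tilde\phi_Q$ lies in $\tilde A\subset Q$; the factored map is then the desired $\tilde\phi$, and its agreement with $\phi$ at generic points is automatic.

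The main obstacle is this integrality claim, which I would handle using Seidenberg's theorem in characteristic zero: every $k$-derivation of $A$ extends uniquely to a $k$-derivation of $\tilde A$. By $\tilde A$-linearity it suffices to check $\tilde\phi_Q(\omega)\in\tilde A$ on generators of the form $\omega=(d\tilde f_{1,1}\wedge\cdots\wedge d\tilde f_{1,r})\otimes\cdots\otimes(d\tilde f_{m,1}\wedge\cdots\wedge d\tilde f_{m,r})$ with $\tilde f_{i,j}\in\tilde A$. I would induct on the number of $\tilde f_{i,j}$ not already in $A$: the base case (all in $A$) reduces to $\phi(\omega)\in A$; in the inductive step, fixing all slots save one, the assignment $g\mapsto \phi_Q(\cdots dg\cdots)$ is a $k$-derivation of $A$ with values in $\tilde A$ by the induction hypothesis, and Seidenberg then extends it to a $k$-derivation of $\tilde A$ still valued in $\tilde A$, allowing the chosen slot to be replaced by any element of $\tilde A$.

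Finally, since the construction depends only on $\phi$ and on the universal properties of differentials and of normalization, it is manifestly natural: it glues across trivializations of $L$ and affine covers of $X$, and produces the required global morphism $\tilde\phi\colon(\Omega^r_{\tilde X})^{\otimes m}\to n^*L$, agreeing with $\phi$ at every generic point of $\tilde X$ by construction.
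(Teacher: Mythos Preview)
Your argument is correct and is in fact more direct than the paper's. The paper proceeds by first reducing to the case where $X$ is integral, affine, and $A$, $A'$ are complete one-dimensional local rings (following \cite{MR2439607}); then for $r=1$ it passes to a cyclic cover on which $\phi$ admits an $m$-th root (Lemma~\ref{lem_root}), applies a generalised Seidenberg statement (Lemma~\ref{lem_g_seidenberg}) to the resulting single derivation, and descends the $m$-th tensor power by Galois invariance; finally, general $r$ is obtained by induction on $r$. Your approach bypasses the root construction, the reduction to complete curves, and the separate induction on $r$: you treat $\phi$ directly as an $rm$-multiderivation and induct on how many arguments lie in $\tilde A\setminus A$, invoking Seidenberg once per step. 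This is cleaner and handles $r$ and $m$ uniformly; the paper's route more closely mirrors existing literature and sets up machinery (the root construction, the reduction to dimension one) reused elsewhere, e.g.\ in Lemma~\ref{lem_vanishing_lift}.

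One point deserves explicit mention. In your inductive step the derivation you obtain takes values in $\tilde A$, not in $A$, so you are invoking the statement ``a $k$-derivation $D\colon A\to \tilde A$ extends to $\tilde D\colon \tilde A\to \tilde A$'' rather than the classical formulation with $D(A)\subset A$. This is exactly the content of Lemma~\ref{lem_g_seidenberg} (take $B=A$ and the ambient ring equal to $\tilde A$), and its proof is the same as Seidenberg's original, but you should either cite that version or remark that the usual proof via $e^{tD}$ goes through unchanged. You should also note that, since $\tilde A\subset Q(A)$ and any derivation of $A$ into $Q(A)$ extends uniquely to $Q(A)$, the Seidenberg extension $\tilde D$ necessarily agrees with your formula $g\mapsto\phi_Q(\cdots dg\cdots)$ on $\tilde A$; this is what lets you conclude $\phi_Q(\omega)\in\tilde A$ rather than merely that \emph{some} extension exists.
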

Using the same notation as in the Proposition, we will call $\tilde{\phi}$ the {\bf lift } of $\phi$.

We follow closely the proofs of  \cite[Theorem 2.1.1]{Kal06} and \cite[Proposition 4.5]{MR2439607}. 
Given an integral domain $A$ we denote by $K(A)$ the field of fractions of $A$. Let $M$ be an $A$-module and let $r,m\ge 0$. Then an $A$-linear map 
$$\phi\colon (\Omega^r_{A})^{\otimes m} \to M$$
induces a map 
\[
\partial\colon K(A)^{\oplus rm}\to M\otimes_{A}K(A)\quad\text{such that}\quad \partial(A^{\oplus rm})\subset M.\]
Note that $\partial$ is a derivation if $r=m=1$. 

We  begin with the following two Lemma:

\begin{lemma}
\label{lem_g_seidenberg}
Let $A$  be a Noetherian integral $K$-algebra,  let 
$B \subset A$ be a Noetherian subalgebra and let 
$\partial\colon B \rightarrow A$ be a derivation.
Let $A'$ (resp. $B'$) be the integral closure of $A$ in $K(A)$ (resp. of $B$ in $K(B)$).

Then $\partial$ lifts to a derivation $\partial'\colon B' \rightarrow A'$.
\end{lemma}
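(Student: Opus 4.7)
The plan is to lift $\partial$ in two stages: first extend it to the fraction field $K(B)$, then restrict to $B'$ and verify that the values land in $A'$. Since $A$ is a domain containing $B$, there is an inclusion $K(B) \hookrightarrow K(A)$, and $\partial$ extends uniquely to a $K$-derivation $\partial_K\colon K(B) \to K(A)$ via the quotient rule
\[\partial_K(x/y) := \frac{y\,\partial(x) - x\,\partial(y)}{y^2}, \qquad x,y \in B,\ y \neq 0.\]
Verification that this is well-defined, $K$-linear, and satisfies Leibniz is routine. Because $B'$ is by definition the integral closure of $B$ inside $K(B)$, we have $B' \subset K(B)$, so the restriction $\partial' := \partial_K|_{B'}\colon B' \to K(A)$ automatically gives a $K$-derivation extending $\partial$.

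It remains to show that $\partial'(b) \in A'$ for every $b \in B'$. The approach will be to pick a monic polynomial $Q(T) = T^n + c_{n-1}T^{n-1} + \dots + c_0 \in B[T]$ with $Q(b) = 0$, which exists by integrality of $b$ over $B$, and to differentiate via $\partial_K$ to obtain
\[Q'(b)\,\partial_K(b) = -Q^\partial(b), \qquad Q^\partial(T) := \sum_i \partial(c_i)\,T^i \in A[T].\]
Since $b$ is integral over $A$, both $Q'(b)$ and $Q^\partial(b)$ lie in $A[b] \subset A'$, yielding at least the relation $Q'(b)\,\partial_K(b) \in A'$.

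The main obstacle will be to pass from $Q'(b)\,\partial_K(b) \in A'$ to the conclusion $\partial_K(b) \in A'$, since $Q'(b)$ need not be a unit in $A'$. I plan to invoke the characteristic zero hypothesis in an essential way via a Seidenberg-type argument: by choosing $Q$ carefully, starting from the minimal polynomial of $b$ over $K(B)$ and rescaling to stay inside $B[T]$ while keeping $b$ a simple root, one can arrange $Q'(b) \neq 0$ and obtain the explicit formula $\partial_K(b) = -Q^\partial(b)/Q'(b) \in K(A)$. Integrality of this element over $A$ is then to be verified either by a local analysis at each height-one prime $\mathfrak{p}$ of $A'$ (using the representation $A' = \bigcap_\mathfrak{p} A'_\mathfrak{p}$) or via the classical symmetric-function identity on the Galois conjugates of $b$. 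The characteristic zero hypothesis is indispensable here, exactly as in the original Seidenberg extension theorem; once integrality is secured, the desired derivation $\partial'\colon B' \to A'$ is simply $\partial_K|_{B'}$ with codomain restricted from $K(A)$ to $A'$.
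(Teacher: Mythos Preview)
Your plan differs from the paper's route. The paper does not differentiate an integral equation at all; it simply observes that Seidenberg's original argument via the formal exponential $E=e^{t\partial}$ carries over verbatim once one checks that $E$ defines an injective ring map $K(B)[[t]]\to K(A)[[t]]$. The exponential packages the integrality in one stroke: from a monic relation $Q(b)=0$ with $Q\in B[T]$ one gets a monic relation $(EQ)(Eb)=0$ over the relevant power-series ring, and integrality of all Taylor coefficients of $Eb$ --- in particular of $\partial(b)$ --- follows, with no division ever taking place.

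Your direct approach, by contrast, leaves the decisive step unresolved. Knowing $Q'(b)\,\partial_K(b)=-Q^\partial(b)\in A'$ with $Q'(b)\in A'\setminus\{0\}$ does not by itself force $\partial_K(b)\in A'$, and neither of your two suggested routes closes the gap cleanly. The symmetric-function argument would require extending $\partial_K$ compatibly to the Galois conjugates of $b$, but those live in a finite extension of $K(B)$ and defining $\partial$ there is precisely the problem you are trying to solve. The height-one localisation reduces matters to DVRs, but the relation $Q'(b)\,\partial_K(b)\in A'_{\mathfrak p}$ gives no control on $v_{\mathfrak p}(\partial_K(b))$ when $v_{\mathfrak p}(Q'(b))>0$. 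There is also a preliminary difficulty you pass over: arranging a \emph{monic} $Q\in B[T]$ with $b$ a simple root is not automatic, since the minimal polynomial of $b$ over $K(B)$ has coefficients in $B'$ rather than $B$, and clearing denominators destroys monicity. In short, the ``Seidenberg-type argument'' you invoke at the end is the entire content of the lemma, and the exponential map is exactly the device that makes it go through without dividing by $Q'(b)$.
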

\begin{proof}
The proof of \cite[Theorem, \S3]{MR0188247} works equally well here.
Indeed, as noted in  \cite[Footnote 2]{MR0188247}, the only thing that is needed in the proof is that the
differential operator $E\coloneqq e^{t\partial}$ defines an injective map 
$E\colon K(B)[[t]] \rightarrow K(A)[[t]]$, which is immediate since $B$ is a subalgebra of $A$.
\end{proof}

\begin{lemma}
\label{lem_root}
Let $X$ be a normal scheme and let $E$ be a coherent sheaf on $X$.
Let $m$ be a positive integer and let  $s\colon E^{\otimes m} \rightarrow \mathcal O_X$ be a morphism.
Assume that there exists a morphism $t\colon E\rightarrow \mathcal O_{X}$ and a rational 
function $\rho \in K(X)$ such that $s = \rho t^{\otimes m}$.  


Then there exists a cyclic Galois cover $\sigma\colon \overline{X} \rightarrow X$ and a 
morphism $\bar{t}\colon \sigma^*E \rightarrow \mathcal O_{\overline{X}}$ such that
$\bar{t}^{\otimes m} = \sigma^*s$.
\end{lemma}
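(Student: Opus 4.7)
The plan is to take $\overline{X}$ to be the normalisation of $X$ in the Kummer extension $L = K(X)(\rho^{1/m})$, build $\bar t$ generically as $\tilde\rho \cdot \sigma^{*}t$ for a chosen $m$-th root $\tilde\rho$ of $\rho$, and then use normality of $\overline{X}$ to upgrade this rational morphism to an honest morphism of sheaves. The case $\rho = 0$ forces $s = 0$ and is handled by $\overline{X} = X$, $\bar t = 0$; from here on I assume $\rho \neq 0$.

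For the construction of the cover, fix a formal $m$-th root $\tilde\rho$ of $\rho$ in an algebraic closure of $K(X)$ and set $L = K(X)(\tilde\rho)$. Since $K$ is algebraically closed of characteristic zero it contains all $m$-th roots of unity, so Kummer theory gives that $L/K(X)$ is cyclic Galois of some degree $d$ dividing $m$. Let $\sigma\colon \overline{X} \to X$ be the normalisation of $X$ in $L$; this is a finite cyclic Galois cover with $\overline{X}$ normal by construction. Regarding $\sigma^{*}t$ as a morphism $\sigma^{*}E \to \mathcal{O}_{\overline{X}}$, define the $\mathcal{O}_{\overline{X}}$-linear rational morphism
\[
\bar t \;\coloneqq\; \tilde\rho \cdot \sigma^{*}t \;\colon\; \sigma^{*}E \longrightarrow K(\overline{X}).
\]
A direct computation in $K(\overline{X})$ gives $\bar t^{\otimes m} = \tilde\rho^{\,m} \cdot \sigma^{*}(t^{\otimes m}) = \sigma^{*}(\rho\, t^{\otimes m}) = \sigma^{*}s$, so the desired identity of $m$-th tensor powers holds automatically on the generic fibre.

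The main, and essentially only nontrivial, step is to show that $\bar t$ takes values in $\mathcal{O}_{\overline{X}}$ rather than merely in the constant sheaf $K(\overline{X})$. Fix a point $p \in \overline{X}$ and a local section $e$ of $\sigma^{*}E$ at $p$. The preceding identity gives $\bar t(e)^m = (\sigma^{*}s)(e^{\otimes m}) \in \mathcal{O}_{\overline{X},p}$, so $\bar t(e) \in K(\overline{X})$ satisfies the monic equation $y^m - (\sigma^{*}s)(e^{\otimes m}) = 0$ with coefficients in $\mathcal{O}_{\overline{X},p}$. Since $\overline{X}$ is normal, $\mathcal{O}_{\overline{X},p}$ is integrally closed in $K(\overline{X})$, and therefore $\bar t(e) \in \mathcal{O}_{\overline{X},p}$. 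This is the crux of the proof: normality of $\overline{X}$, which is forced by our taking the normalisation in $L$, is exactly what lets the integral relation $y^m = (\sigma^{*}s)(e^{\otimes m})$ promote $\bar t$ from a rational to a regular morphism. Beyond this, the only routine points to check are the degenerate case in which $\tilde\rho$ already lies in $K(X)$ (so the cover is trivial and the cyclic group has order one) and the extension of $\bar t$ by $\mathcal{O}_{\overline{X}}$-linearity to arbitrary local sections of $\sigma^{*}E$, not just pullbacks of sections of $E$.
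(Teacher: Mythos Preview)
Your proof is correct and follows the same construction as the paper: take $\overline{X}$ to be the normalisation of $X$ in $K(X)(\sqrt[m]{\rho})$ and set $\bar t = \tilde\rho\cdot\sigma^{*}t$. The paper's proof merely asserts that this $\bar t$ is a morphism $\sigma^{*}E\to\mathcal O_{\overline X}$, whereas you supply the missing justification via the integrality argument $\bar t(e)^m=(\sigma^{*}s)(e^{\otimes m})\in\mathcal O_{\overline X,p}$ together with normality of $\overline X$; this is exactly the point the paper leaves implicit.
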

\begin{proof}
Let $\overline{X}$ to be the normalisation 
of $X$ in $K(X)(\sqrt[m]{\rho})$ and let $\sigma\colon \overline{X}\to X$ be the induced morphism. Then there exists a rational function 
$z$ on $\overline{X}$ such that if $\bar  t=z \sigma^*t$ then 
$\bar{t}\colon \sigma^*E \rightarrow \mathcal O_{\overline{X}}$ is a morphism such that
$\bar{t}^{\otimes m} = \sigma^*s$.
\end{proof}

We call the morphism $\bar{t}$ constructed in Lemma \ref{lem_root} the {\bf $m$-th root} of $s$.

%
\begin{proof}[Proof of Propostion \ref{lem_tensor_lift}]
The claim is local on $X$ so we may freely assume that 
$X = \Spec ~A$ is affine and $L \cong \mathcal O_X$.
Moreover, it suffices to prove the existence of the lift after restricting to an irreducible component of $X$ and 
so we may freely assume that $X$ is integral.
Let $A'$ be the integral closure of $A$ in $K(A)$ 
and define $X' \coloneqq \Spec\, {A'}$ and let $n\colon X' \rightarrow X$ be the normalisation 
morphism.
Using the same argument as in the proofs of \cite[Lemma 4.3 and Proposition 4.5]{MR2439607}, we may assume that $A$ and $A'$ are complete one dimensional local rings.

Let us first suppose that $r= 1$. 
The map $\phi\colon (\Omega^1_X)^{\otimes m}\to \mathcal O_X$ induces a map $\phi'\colon (n^*\Omega^1_X)^{\otimes m}
\to \mathcal O_{X'}$.  Let $t\colon n^*\Omega^1_X
\to \mathcal O_{X'}$ be a non-zero map and let $\rho\in K(X')$ be a rational function such that $\phi'=\rho t^{\otimes m}$. 
Then Lemma \ref{lem_root} implies the existence of a cover 
$\sigma\colon \overline{X} = \Spec\, {\overline{A}} \rightarrow X'$  associated to $\phi'$.
Let 
$\psi\colon \sigma^*n^*\Omega^1_X \rightarrow \mathcal O_{\overline{X}}$ be the $m$-th root of $\phi$.

Observe that $\psi$ corresponds to a derivation 
$\partial_{\psi}\colon A \rightarrow \overline{A}$.
By Lemma \ref{lem_g_seidenberg} this lifts to a derivation
$\partial'_{\psi}\colon A' \rightarrow \overline{A}$.
This in turn implies that $\psi$ lifts to a morphism
$\rho\colon \sigma^*\Omega^1_{X'} \rightarrow \mathcal O_{\overline{X}}$.
Finally note that $\rho^{\otimes m}$ is $G$-invariant and so descends to
a morphism $(\Omega^1_{X'})^{\otimes m} \rightarrow \mathcal O_{X'}$, which is
precisely our required lifting of $\phi$.

\medskip

To prove the claim, by following the proof
of \cite[Proposition 4.5]{MR2439607}, we may proceed by induction on $r$ and
reduce the proof of the statement for general $r \geq 1$
to the case $r = 1$, proven above.
\end{proof}

\begin{lemma}
\label{lem_vanishing_lift}
Let $X$ be an integral scheme and let $\phi\colon \Omega_X^r \rightarrow L$ be a Pfaff field where $L$ is  a locally free sheaf of rank one on $X$.
Let $n\colon X' \rightarrow X$ be the normalisation.  Let $P \subset X$ be a codimension one subscheme and assume that $\phi$ vanishes along $P \subset X$.

Then $\phi'\colon \Omega^r_{X'} \rightarrow n^*L$ vanishes along $n^{-1}(P)$ where $\phi'$ is the lift
of $\phi$ to the normalisation.
\end{lemma}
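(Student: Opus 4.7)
My plan is to check the vanishing at the generic point of each irreducible component $Q$ of $n^{-1}(P)$. Localising $X'$ at the corresponding height one prime $\mathfrak{p}'$ and passing to the completion, we may assume that we are in the following local model: $(A, \mathfrak{p})$ is a one dimensional complete local integral domain; $B$ is its integral closure, which is a complete discrete valuation ring with maximal ideal $\mathfrak{m}$; and, after trivialising $L$, the Pfaff field is a morphism $\phi\colon \Omega^r_A \to A$ whose image lies in $\mathfrak{p}$. The uniqueness in Proposition \ref{lem_tensor_lift} (the lift is characterised by agreement with $\phi$ at generic points) allows us to identify $\phi'$ with the lift constructed in this local model, and our task becomes to show that this lift takes values in $\mathfrak{m}$.

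Next, I reduce to the case $r = 1$, following the same induction on $r$ used in the proof of Proposition \ref{lem_tensor_lift}: the inductive step expresses the lift of a rank $r$ Pfaff field in terms of the lift of a rank one Pfaff field obtained by a suitable contraction, and such a contraction preserves the containment in $\mathfrak{p}$, so the vanishing hypothesis propagates at each step. In the case $r = 1$, the Pfaff field $\phi$ corresponds to a derivation $\partial\colon A \to A$ with $\partial(A) \subset \mathfrak{p}$, and the lift is the derivation $\partial'\colon B \to B$ provided by Lemma \ref{lem_g_seidenberg}; what must be shown is $\partial'(B) \subset \mathfrak{m}$.

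For the computation, since we are in characteristic zero the extension $A \subset B$ is tame, and in this complete setting we may choose $b \in B$ with $B = A[b]$. Letting $p(x) = x^n + b_{n-1} x^{n-1} + \cdots + b_0 \in A[x]$ denote its minimal polynomial and differentiating the identity $p(b) = 0$ gives
\[
p'(b)\,\partial'(b) \;=\; -\sum_{i=0}^{n-1} (\partial b_i)\, b^i.
\]
Each $\partial b_i$ lies in $\mathfrak{p}$, and $\mathfrak{p} B \subseteq \mathfrak{m}^e$ where $e$ is the ramification index, so the right hand side has $\mathfrak{m}$-adic valuation at least $e$. On the other hand, in the monogenic case $(p'(b))$ is precisely the different ideal $\mathfrak{d}_{B/A}$, whose $\mathfrak{m}$-adic valuation equals $e - 1$ by tameness. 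Dividing, we obtain $v_\mathfrak{m}(\partial'(b)) \geq 1$, so $\partial'(b) \in \mathfrak{m}$. Combined with $\partial'(A) \subset \mathfrak{p} B \subset \mathfrak{m}$, the Leibniz rule applied to $B = A[b]$ now yields $\partial'(B) \subset \mathfrak{m}$, which is the required vanishing along $Q$. The main obstacle I expect is the reduction to this complete monogenic local model; once it is in place, the conclusion is forced by the tame different formula.
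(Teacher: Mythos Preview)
Your skeleton matches the paper's: localise and complete at a height-one prime to reduce to one-dimensional complete local rings $A\subset B$, then induct on $r$ with base case $r=1$. The paper delegates the base case to \cite[Lemme 1.2]{MR2092774} and spells out the inductive step via coefficient fields; you do the reverse, handling $r=1$ by hand and gesturing at Proposition~\ref{lem_tensor_lift} for the induction.

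Your $r=1$ argument has a genuine gap. You invoke the different $\mathfrak d_{B/A}$, the identity $(p'(b))=\mathfrak d_{B/A}$, and the tame formula $v_{\mathfrak m}(\mathfrak d_{B/A})=e-1$. But these are results for finite separable extensions of Dedekind domains. Here $A$ is the \emph{non-normal} local ring of a curve singularity and $B$ is its normalisation, so $\mathrm{Frac}(A)=\mathrm{Frac}(B)$: the Dedekind different via the trace form is trivial, and the presentation $B\cong A[x]/(p(x))$ required for the K\"ahler-different identity fails because $B$ is a rank-one torsion-free $A$-module which is not free. Concretely, for the cusp $A=K[[t^2,t^3]]\subset B=K[[t]]$ with $b=t$ and $p(x)=x^2-t^2$, the kernel of $A[x]\to B$ also contains $t^2x-t^3\notin(p(x))$. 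So the valuation bound $v_{\mathfrak m}(p'(b))\le e-1$ you need is not justified by the mechanism you cite.

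The idea is repairable. In characteristic zero one may adjust the uniformiser by the $e$-th root of a unit so that $t^e\in A$; then $p(x)=x^e-t^e$ gives $v_{\mathfrak m}(p'(t))=e-1$ directly, and your Leibniz argument goes through (this is essentially what lies behind the cited lemma of Bost). You should also not take monogenicity $B=A[b]$ for granted when the residue fields $K\subsetneq K'$ differ; the paper sidesteps this in its inductive step by working with the generating set $K'\cup\{t\}$ for $\Omega_{A'}$ and handling the $K'$-part via a minimal polynomial over $K$, which is a more robust reduction than a single monogenic generator. Your hand-wave at the inductive step (``a suitable contraction preserves the containment in $\mathfrak p$'') is in the right spirit but should be made as explicit as the paper's argument: one contracts by $da$ for $a\in A$, and the point is that the lift of $\phi(da\wedge\cdot)$ coincides with $\phi'(da\wedge\cdot)$.
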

\begin{proof}
The claim is local about the generic point of $P$, so up to shrinking $X$ we may freely assume that 
$X = \Spec\, A$ is affine and $X' = \Spec\, A'$.
We may argue as in the proof of \cite[Proposition 4.5]{MR2439607} to reduce to the case where $A$ and $A'$ are  complete one dimensional  local rings
with coefficients fields $K$ and $K'$, respectively.
We now proceed by induction on $r$.
When $r = 1$ the claim follows from \cite[Lemme 1.2]{MR2092774} (note that the cited Lemma is stated for varieties, but applies equally well to reduced schemes). 
So suppose that $r \geq 2$.  Let $t$ be a uniformising parameter of $A'$. 
To prove our claim, it suffices to show that $\phi'(dx_1\wedge\dots\wedge dx_r)=0$ for $x_1,\dots,x_r\in K'\cup \{t\}$. 
Since $dt\wedge \dots \wedge dt = 0$ we may assume (up to relabeling)
that $x_1 \in K'$.  Since $K \subset K'$ is a finite extension we may find $Q(s) = \sum_{i = 0}^c a_is^i \in K[s]$ such that $Q(x_1) =0$ and $Q'(x_1) \neq 0$.
On one hand we have that  
\[0 =dQ(x_1)\wedge dx_2 \wedge \dots \wedge dx_r = Q'(x_1)dx_1\wedge \dots \wedge dx_r + \sum_{i = 0}^c x_1^ida_i \wedge dx_2 \wedge \dots \wedge dx_r\]
and so 
\[
\phi'(dx_1\wedge \dots \wedge dx_r) = -\frac{1}{Q'(x_1)}\phi'(\sum_{i = 0}^c x_1^ida_i \wedge dx_2 \wedge \dots \wedge dx_r).
\]

On the other hand, $\phi(da_i \wedge \cdot)\colon \Omega^{r-1}_X \rightarrow L$ defines a Pfaff field of rank $r-1$
and so our induction hypothesis implies that 
\[
\phi'(\sum_{i = 0}^c x_1^ida_i \wedge dx_1 \wedge \dots \wedge dx_r)
\]
vanishes along $n^{-1}(P)$ and we may conclude.
\end{proof}

\subsection{Construction of the different}


\begin{lemma}
\label{lem_cartier_adj}
Let $X$ be a normal scheme, let $\mathcal F$ be a foliation of rank $r$ on $X$,
let $\iota \colon D \hookrightarrow X$ be a reduced 
subscheme of codimension one and suppose that either every component of $D$ is $\cal F$-invariant or that every component of 
$D$ is not $\cal F$-invariant.
Let $n\colon S \rightarrow D$ be the normalisation and suppose there exist
\begin{enumerate}
\item a  subscheme $Z$ of $X$ such that $Z \cap D$ is of codimension at least two in $D$; and 
\item a $\mathbb Q$-divisor $\Delta \geq 0$ on $X$
which does not  contain any component of $D$ in its support and such that for some sufficiently divisible positive integer $m$ we have that
$m(K_{\mathcal F}+\Delta)$ and $\epsilon(D)D$ are Cartier 
on $X \setminus Z$.
\end{enumerate}

Then, there exists a canonically defined $\mathbb Q$-integrable distribution  of rank $r-\epsilon(D)$ and index $m$ on $S$, given by a morphism
\[\psi_S\colon  (\Omega^{r-\epsilon(D)}_S)^{\otimes m} \rightarrow 
n^w\mathcal O_X(m(K_{\mathcal F}+\Delta+\epsilon(D)D)).\]
\end{lemma}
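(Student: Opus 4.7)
The plan is to construct $\psi_S$ by working on the Cartier locus $U := X \setminus Z$, producing a suitable Pfaff field on $D_U := D \cap U$, lifting it to the normalisation $S_U := n^{-1}(D_U)$ via Proposition \ref{lem_tensor_lift}, and finally extending uniquely to all of $S$ using that $S$ is normal and $S \setminus S_U$ has codimension at least two in $S$. Over $U$, the $m$-th tensor power of the Pfaff field of $\mathcal F$, composed with the inclusion $\mathcal O_U(mK_\mathcal F) \hookrightarrow \mathcal O_U(m(K_\mathcal F+\Delta))$ induced by $\Delta \ge 0$, yields
\[
\Phi_U\colon (\Omega^r_U)^{\otimes m} \longrightarrow \mathcal O_U(m(K_\mathcal F+\Delta)).
\]

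Next I would split into the two cases. If $\epsilon(D)=0$, then componentwise invariance of $D$ (together with the torsion-freeness of the target) forces $\Phi_U|_{D_U}$ to factor through the natural quotient $\Omega^r_U|_{D_U}\to \Omega^r_{D_U}$, giving
\[
\Phi_{D_U}\colon (\Omega^r_{D_U})^{\otimes m}\longrightarrow \mathcal O_{D_U}(m(K_\mathcal F+\Delta))|_{D_U}.
\]
If $\epsilon(D)=1$, then $D\cap U$ is a Cartier divisor, and its conormal sequence provides a natural inclusion $\Omega^{r-1}_{D_U}\otimes \mathcal O_{D_U}(-D)\hookrightarrow \Omega^r_U|_{D_U}$; taking $m$-th tensor powers and composing with $\Phi_U|_{D_U}$ yields
\[
\Phi_{D_U}\colon (\Omega^{r-1}_{D_U})^{\otimes m}\longrightarrow \mathcal O_{D_U}(m(K_\mathcal F+\Delta+D))|_{D_U}.
\]
In either case the target is locally free of rank one on $D_U$, so Proposition \ref{lem_tensor_lift} applied to the normalisation $n_U\colon S_U\to D_U$ produces a lift
\[
\tilde\Phi\colon (\Omega^{r-\epsilon(D)}_{S_U})^{\otimes m}\longrightarrow n^w\mathcal O_X(m(K_\mathcal F+\Delta+\epsilon(D) D))|_{S_U}
\]
agreeing with $\Phi_{D_U}$ at each generic point of $S_U$.

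To conclude, I would observe that $n^w\mathcal O_X(m(K_\mathcal F+\Delta+\epsilon(D) D))$ is $S_2$ on the normal scheme $S$, so pushforward along $j\colon S_U\hookrightarrow S$ and the adjunction $\operatorname{Hom}(E,j_*F)=\operatorname{Hom}(j^*E,F)$ uniquely extend $\tilde\Phi$ to a morphism $\psi_S$ on all of $S$. The verification that $\psi_S$ actually defines a $\mathbb Q$-integrable distribution of rank $r-\epsilon(D)$ and index $m$ reduces to a generic check along each component of $S$, where either $T_\mathcal F|_S$ (invariant case) or $T_\mathcal F\cap T_S$ (non-invariant case) is a saturated subsheaf of $T_S$ closed under Lie bracket whose $m$-th Pfaff power recovers $\tilde\Phi$. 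Canonicity of $\psi_S$ follows from functoriality of every step and the uniqueness of the $S_2$-extension. The main obstacle will be carefully justifying, in the possibly singular setting of $X$ and $D$, both the factorisation through $\Omega^r_{D_U}$ in the invariant case and the conormal inclusion in the non-invariant case; it is precisely to handle these subtleties that the hypothesis on $U$ being a Cartier locus for $\epsilon(D)D$ and for $m(K_\mathcal F+\Delta)$ is needed.
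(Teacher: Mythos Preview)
Your approach is essentially the same as the paper's: restrict to the Cartier locus, build the $m$-th tensor power of the Pfaff field twisted by $\Delta$, restrict to $D$ in the appropriate way according to $\epsilon(D)$, and then lift to the normalisation via Proposition~\ref{lem_tensor_lift}.

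Two small points are worth flagging. First, in the non-invariant case your ``natural inclusion'' $\Omega^{r-1}_{D_U}\otimes\mathcal O_{D_U}(-D)\hookrightarrow \Omega^r_U|_{D_U}$ is not literally an inclusion on the possibly singular locus of $D_U$; what you actually have is a well-defined \emph{map} coming from $df\wedge(-)\colon \Omega^{r-1}_X|_D\to\Omega^r_X|_D$ factoring through $\Omega^{r-1}_D$ (because $df\wedge df\wedge\beta=0$), together with the identification $I_D/I_D^2\cong\mathcal O_D(-D)$. The paper makes this explicit by choosing local lifts $\tilde\alpha_i$ and a local equation $f$, and then checking independence of both choices; your conormal-sequence packaging is exactly the same content, but you should not claim injectivity, only well-definedness (which is all that is needed, since you immediately compose with $\Phi_U|_{D_U}$). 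Second, for the integrability check the paper invokes Lemma~\ref{lem_invariant_vector_fields} in the invariant case and explicitly exhibits the quotient $E$ of $\Omega^1_{U\cap D}$ in the non-invariant case; your one-line generic check is correct in outline but would benefit from pointing to these facts, since ``$T_{\mathcal F}\cap T_S$'' is not a priori a subsheaf of $T_S$ without this.
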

\begin{proof}
It suffices to prove the Lemma away from a subvariety of codimension 
at least two in $D$, and so we may freely
assume that for some sufficiently divisible positive integer $m$, we have that  
$m(K_{\mathcal F}+\Delta)$ and $\epsilon(D)D$ are Cartier.

Taking the $m$-th tensor power of the Pfaff field defining $\mathcal F$ 
and composing 
with the inclusion $\mathcal O_X(mK_{\mathcal F}) 
\rightarrow \mathcal O_X(m(K_{\mathcal F}+\Delta))$
we get a morphism
$\phi\colon (\Omega^r_X)^{\otimes m} \rightarrow \mathcal O_X(m(K_{\mathcal F}+\Delta))$.

Suppose first that $D$ is $\cal F$-invariant.
Let $N \coloneqq {\rm ker}~ ((\Omega^r_X)^{\otimes m}\vert_D \rightarrow (\Omega^r_D)^{\otimes m})$
and let $\phi\vert_D$ be the restriction of  $\phi$ to $D$. 
By definition $\phi\vert_D(N)$ vanishes at the generic point of $D$, and since $m(K_{\mathcal F}+\Delta)$
is Cartier and $D$ is reduced, it follows that $\phi\vert_D(N) = 0$.
Therefore we have a morphism 
$\psi\colon (\Omega^r_D)^{\otimes m} \rightarrow \mathcal O_D(m(K_{\mathcal F}+\Delta))$.

By Lemma  \ref{lem_invariant_vector_fields}, we have a commutative diagram in a neighbourhood $U$ of
the generic point of $D$
\begin{center}
\begin{tikzcd}
\Omega^1_U|_{U\cap D}  \arrow[r]\arrow[d] &T^*_{\cal F}\vert_{U\cap D}\\
\Omega^1_{U\cap D} \arrow[ur] & 
\end{tikzcd}
\end{center}
and the integrability condition follows immediately. 

\medskip

Now suppose that $D$ is not $\cal F$-invariant.  We define
a morphism 
\[
\psi'\colon (\Omega_D^{r-1})^{\otimes m} \rightarrow \mathcal O_D(m(K_{\mathcal F}+\Delta+D))
\]
 by 
\[
\psi'(\alpha_1\otimes \dots \otimes \alpha_m) \coloneqq
\frac {\phi(df \wedge \tilde{\alpha}_1 \otimes \dots \otimes df \wedge \tilde{\alpha}_m)}{f^m}\vert_D
\]
 for any  local sections $\alpha_1, \dots, \alpha_m$ of $\Omega^{r-1}_D$, where
 $f$ is a local equation of $D$,
$\tilde{\alpha}_i$ is any local lift of $\alpha_i$ to $\Omega^{r-1}_X$,
and  $\frac {\phi(df \wedge \tilde{\alpha}_1 \otimes \dots \otimes df \wedge \tilde{\alpha}_m)}{f^m}$ 
is considered as 
a section of $\mathcal O_X(m(K_{\mathcal F}+\Delta+D))$.
We claim that this morphism is independent
of our choice of $f$. Indeed, if $f'$ is another local equation of $D$
then $f' = uf$ where $u$ is a unit.  We compute
that 
\[
\frac{\phi(df'\wedge \tilde{\alpha}_1 \otimes \dots \otimes df'\wedge \tilde{\alpha}_m)}{(f')^m} = 
\frac{\phi(df\wedge \tilde{\alpha}_1 \otimes \dots \otimes df \wedge \tilde{\alpha}_m)}{f^m}+
\frac{\phi(\omega_0)}{f^{m-1}},
\] 
where $\omega_0$ is a local section of $\Omega^r_X$. 
Observe that 
$\frac{\phi(\omega_0)}{f^{m-1}}$ 
 is a section of 
$\mathcal O_X(m(K_{\mathcal F}+\Delta)+(m-1)D)$, 
and so it vanishes along $D$ when considered as a section of $\mathcal O_X(m(K_{\mathcal F}+\Delta+D))$.  Thus, 
\[
\frac{\phi(df'\wedge \tilde{\alpha}_1 \otimes \dots \otimes df'\wedge \tilde{\alpha}_m)}{(f')^m}|_D = 
\frac{\phi(df\wedge \tilde{\alpha}_1 \otimes \dots \otimes df \wedge \tilde{\alpha}_m)}{f^m}|_D,
\] 
as required.
Likewise, it is easy to check that $\psi'$ is independent of the choice of $\tilde{\alpha_i}$. 

Since $D$ is not $\cal F$-invariant, by Lemma  \ref{lem_invariant_vector_fields} the composition 
\[
\cal O_{U\cap D}(-D)\to \Omega^1_U|_{U\cap D}\to T_{\cal F}^*|_{U\cap D}
\]
is non-zero. Let $E$ be its cokernel. Then, the induced map 
\[
\Omega^1_{U\cap D}\to E
\]
satisfies the integrability condition.

In either case, by Proposition \ref{lem_tensor_lift} we have a lift of $\psi$ or $\psi'$
to 
\[\tilde{\psi}\colon (\Omega^{r-\epsilon(D)}_S)^{\otimes m} 
\rightarrow n^*\mathcal O_X(m(K_{\mathcal F}+\Delta+\epsilon(D)D))
\]
which necessarily satisfies our integrability condition and we may conclude.
\end{proof}

\begin{remark}
\label{r_adjunction1}
 Set-up as in Lemma \ref{lem_cartier_adj}. If in addition we have that 
$D$ is   $S_2$, then using the same proof as in the Lemma,   it follows that 
there exists a canonically defined $\mathbb Q$-integrable distribution  of rank $r-\epsilon(D)$ and index $m$ on $D$, given by a morphism
\[\psi_D\colon (\Omega_D^{r-\epsilon(D)})^{\otimes m} \rightarrow \iota^w\mathcal 
O_X(m(K_{\mathcal F}+\Delta+\epsilon(D)D)).\]
\end{remark}

\begin{propdef}
\label{prop_adjunction}
Let $X$ be a normal scheme, let $\mathcal F$ be a foliation of rank $r$ 
on $X$, and let $\iota \colon D \hookrightarrow X$ 
be an integral subscheme of codimension one.
Let $n\colon S \rightarrow D$ be the normalisation.
Suppose that there exist a  subscheme $Z$ of $X$ such that $Z \cap D$ is 
of codimension at least two in $D$ and 
a $\mathbb Q$-divisor $\Delta\ge 0$ on $X$
which does not  contain $D$ in its support and
such that $K_{\mathcal F}+\Delta$ and $\epsilon(D)D$ are $\mathbb Q$-Cartier on $X \setminus Z$.

Then, there exists a canonically defined {\bf restricted foliation}
$\mathcal F_{S}$ on $S$, and a canonically defined $\mathbb Q$-divisor ${\rm Diff}_S(\mathcal F, \Delta) \geq 0$, 
called the {\bf different}, 
such that 
\[
n^w(K_{\mathcal F}+\Delta+\epsilon(D)D) \sim_{\mathbb Q} K_{\mathcal F_S}+{\rm Diff}_S(\mathcal F, \Delta). \]
If $\Delta =0$ then we denote ${\rm Diff}_S(\cal F,0)$ simply as ${\rm Diff}_S(\cal F)$. 
\end{propdef}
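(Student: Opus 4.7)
The plan is to construct $\mathcal F_S$ and ${\rm Diff}_S(\mathcal F, \Delta)$ by unpacking the $\mathbb{Q}$-integrable distribution produced by Lemma \ref{lem_cartier_adj}. For a sufficiently divisible positive integer $m$ such that $m(K_{\mathcal F}+\Delta)$ and $\epsilon(D)D$ are Cartier on $X\setminus Z$, that Lemma yields a morphism
\[
\psi_S \colon (\Omega^{r-\epsilon(D)}_S)^{\otimes m} \to n^w\mathcal{O}_X(m(K_{\mathcal{F}}+\Delta+\epsilon(D)D))
\]
defining a $\mathbb{Q}$-integrable distribution of index $m$ on the normalisation $S$.

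The associated integrable distribution $\mathcal F^\circ$ of $\psi_S$ is defined in a neighbourhood of the generic point of each irreducible component of $S$. Saturating its Pfaff field on a dense open subset $U \subset S$ with complement of codimension at least two gives a genuine foliation on $U$. Since $S$ is normal, Lemma \ref{lem_uniqueness}(3) extends this uniquely to a foliation $\mathcal F_S$ on all of $S$, which I take as the restricted foliation.

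To define the different, I would compare $\psi_S$ with the $m$-th tensor power of the Pfaff field $\phi_{\mathcal F_S}\colon \Omega^{r-\epsilon(D)}_S \to \mathcal O_S(K_{\mathcal F_S})$ of $\mathcal F_S$. By construction, both morphisms agree at each generic point of $S$ (both arise from the same local integrable distribution $\mathcal F^\circ$), and the cokernel of $\phi_{\mathcal F_S}^{\otimes m}$ is supported in codimension at least two because $\mathcal F_S$ is a foliation. Applying Lemma \ref{lem_sat_pfaff} with $E = (\Omega^{r-\epsilon(D)}_S)^{\otimes m}$, $L_1 = \mathcal O_S(mK_{\mathcal F_S})$, and $L_2 = n^w\mathcal O_X(m(K_{\mathcal F}+\Delta+\epsilon(D)D))$ produces a uniquely defined effective Weil divisor $mB$ on $S$ with
\[
n^w(m(K_{\mathcal F}+\Delta+\epsilon(D)D)) \sim mK_{\mathcal F_S} + mB.
\]
I set ${\rm Diff}_S(\mathcal F, \Delta) := B$; dividing through by $m$ gives the required $\mathbb{Q}$-linear equivalence, and $B \geq 0$ by construction.

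Finally, I would check canonicity: replacing $m$ by a positive multiple $km$ multiplies the line sheaf identification by $k$ on both sides, so the resulting $\mathbb{Q}$-divisor $B$ is intrinsic. The main obstacle in the argument is verifying that $\psi_S$ and $\phi_{\mathcal F_S}^{\otimes m}$ really do agree at the generic point of $S$ as morphisms into $n^w\mathcal O_X(m(K_{\mathcal F}+\Delta+\epsilon(D)D))$; this rests on matching the local identification of this line sheaf with $\mathcal O_S(mK_{\mathcal F^\circ})$ supplied by the definition of $\mathbb{Q}$-integrable distribution. Once this identification is in place, the effectivity of $B$ and the uniqueness of $\mathcal F_S$ follow formally from Lemma \ref{lem_sat_pfaff} and Lemma \ref{lem_uniqueness}, and essentially all the substantive work has been shouldered by Lemma \ref{lem_cartier_adj} and Proposition \ref{lem_tensor_lift}.
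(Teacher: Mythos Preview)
Your argument has a genuine gap in its invocation of Lemma~\ref{lem_cartier_adj}. That lemma requires $\epsilon(D)D$ itself to be \emph{Cartier} on $X\setminus Z$ (the integer $m$ in its hypothesis only multiplies $K_{\mathcal F}+\Delta$), whereas the Proposition--Definition assumes only that $\epsilon(D)D$ is $\mathbb Q$-Cartier on $X\setminus Z$. When $D$ is $\mathcal F$-invariant this is harmless since $\epsilon(D)=0$, but when $\epsilon(D)=1$ the construction in the proof of Lemma~\ref{lem_cartier_adj} divides by $f^m$ for a local equation $f$ of $D$, and no such $f$ exists if $D$ is only $\mathbb Q$-Cartier. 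So you do not yet have a globally defined $\psi_S$; you only have $\psi_\eta$ at the generic point of $S$, where everything is automatically Cartier.

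The paper addresses exactly this point. It first applies Lemma~\ref{lem_cartier_adj} at the generic point to obtain $\psi_\eta$, then argues separately that $\psi_\eta$ extends across each prime divisor of $S$. The invariant case is immediate as above. In the non-invariant case, locally around a codimension-one point $P$ of $D$ one takes a quasi-\'etale cyclic cover $q\colon V\to U$ with Galois group $G$ of order $m$ making $q^*D$ Cartier, applies Lemma~\ref{lem_cartier_adj} on the normalisation $S_V$ of $q^{-1}(D)$, and then observes that because $m(K_{\mathcal F}+\Delta+D)$ is already Cartier on $S_U$, extending $\psi_\eta$ on $S_U$ is equivalent to extending its pullback on $S_V$, which is furnished by the lemma upstairs composed with the natural map $p^*(\Omega^{r-1}_{S_U})^{\otimes m}\to(\Omega^{r-1}_{S_V})^{\otimes m}$. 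Once $\psi_S$ exists globally, the rest of your outline (comparison via Lemma~\ref{lem_sat_pfaff}, definition of the different as $\tfrac{1}{m}B$, and canonicity) matches the paper.
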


\begin{proof}
It suffices prove the Proposition away from a subvariety of codimension 
at least two in $D$, and so we may freely
assume that $K_{\mathcal F}+\Delta$ and $\epsilon(D)D$ are $\mathbb Q$-Cartier.

We first construct the restricted foliation on $S$.
By Lemma \ref{lem_uniqueness}
it suffices to define the restricted foliation $\mathcal F_S$ at the generic point of $S$. 
We may therefore assume that $K_{\mathcal F}$
and $D$ are both Cartier and therefore may apply Lemma \ref{lem_cartier_adj} to produce $\mathcal F_S$.

Let $m$ be a sufficiently divisible positive integer $m$ such that 
$m(K_{\mathcal F}+\Delta)$ and $m\epsilon(D)D$ are Cartier. 
Then, by Lemma \ref{lem_cartier_adj}, we have a morphism at the generic point $\eta$ of $S$, 
\[\psi_\eta\colon  (\Omega^{r-\epsilon(D)}_S)^{\otimes m}\vert_\eta \rightarrow 
\mathcal O_S(m(K_{\mathcal F}+\Delta+\epsilon(D)D))\vert_\eta\]
which agrees with $(\Omega^{r-\epsilon(D)}_S)|_{\eta}^{\otimes m} \rightarrow \mathcal O_\eta(mK_{\mathcal F_S})$.  
If we can show that, after possibly replacing $m$ by a larger multiple,  $\psi_{\eta}$ extends to a morphism on all of $S$, call it $\psi_S$, 
then by Lemma \ref{lem_sat_pfaff} there exists an effective Weil divisor $B$ such that 
\[
mK_{\cal F_S}+B\sim m(K_{\cal F}+\Delta+\epsilon (D)D)|_S.
\]
Thus, it is enough to define ${\rm Diff}_S(\cal F,\Delta)\coloneqq \frac 1 m B$.

\medskip

%
%
%
%
%


Note that if $D$ is $\cal F$-invariant, then the existence of $\psi_S$ is guaranteed by Lemma  \ref{lem_cartier_adj}. 
Thus, we may assume that $D$ is not $\cal F$-invariant. To check that $\psi_{\eta}$ extends as a morphism, 
it suffices to do so locally.  Let $P\in D$ be a closed point. Then there exists a quasi-\'etale cyclic cover 
$q\colon V\to U$ with Galois group $G$, where $U$ is a neighbourhood of $P$ in $X$, and 
such that 
$q^*D$ is a Cartier divisor. Let $m$ be the order of $G$. After possibly replacing $q$ by a higher cover, we may assume that $m$ does not depend  on $P\in D$. Let
 $\cal F_V\coloneqq q^{-1}\cal F|_U$ be the foliation induced on $V$, let $D_V\coloneqq q^{-1}(D \cap U)$ and let $\Delta_V$ be a $\mathbb Q$-divisor on $V$ such that $K_{\cal F_V}+\Delta_V=q^*(K_{\cal F}+\Delta)$.
By assumption, we may assume that $q$ is \'etale in a neighborhood of any 
general point of  $U\cap {\rm Supp}\, \Delta$.
Thus, $\Delta_V\ge 0$. Let $\nu\colon S_V\to D_V$ be the normalisation of $D_V$ 
and let  $p\colon S_V\to S_U:=n^{-1}(U\cap D)$ be the induced morphism. 
Note that $D_V$ is not $\cal F_V$-invariant.

Since $m(K_{\cal F_V}+\Delta_V)$ and $D_V$ are Cartier on $V$, 
by Lemma \ref{lem_cartier_adj} there exists a $\mathbb Q$-integrable distribution of rank $r-1$ and index $m$ on $S_V$, 
given by a morphism
\[
\psi_{S_V}\colon (\Omega_{S_V}^{r-1})^{\otimes m}\to \nu^*\mathcal O_{D_V}(m(q^*(K_{\cal F}+\Delta+D))).
\]
Since $S_U$ is $S_2$, and $m(K_{\cal F}+\Delta+D)$ is Cartier on $S_U$
the map $\psi_\eta|_{S_U}$ extends to $S_U$ if and only if the map $p^*(\psi_\eta|_{S_U})$
extends to $S_V$.
If $\sigma\colon p^*(\Omega^{r-1}_{S_U})^{\otimes m}\to  (\Omega_{S_V}^{r-1})^{\otimes m}$ denotes the natural map then 
the composition 
\[
\psi_{S_V}\circ \sigma \colon p^*(\Omega^{r-1}_{S_U})^{\otimes m}\to \nu^*\mathcal O_{D_V}(m(q^*(K_{\cal F}+\Delta+D)))
\]
is an extension of $p^*(\psi_\eta|_{S_U})$.
Thus, our claim follows.
\end{proof}

\begin{remark}
Set-up as in Proposition-Definition \ref{prop_adjunction}. If in addition we have that  $D$ is  $S_2$ then there exists a  
 $\mathbb Q$-integrable distribution of rank $r-\epsilon(D)$ and index $m$ on $D$, given by the 
morphism    
\[\psi_D\colon  (\Omega^{r-\epsilon(D)}_D)^{\otimes m} \rightarrow 
\iota^w\mathcal O_X(m(K_{\mathcal F}+\Delta+\epsilon(D)D))\]
and whose associated integrable distribution on $D$ coincides with the restricted foliation on $S$ at any generic point of $D$. 
Indeed, using the same construction 
as in Proposition-Definition \ref{prop_adjunction},
by Lemma \ref{lem_cartier_adj} and Remark \ref{r_adjunction1} we may assume that $D$ is not $\cal F$-invariant and that 
we have a morphism
\[
(\Omega^{r-\epsilon(D)}_{D_V})^{\otimes m} \rightarrow  \mathcal O_{D_V}(q^*(m(K_{\cal F}+\Delta+D))).
\]
Proceeding as in the proof of Proposition-Definition \ref{prop_adjunction}, 
it follows that there exists a morphism
\[
(\Omega^{r-1}_{D_U})^{\otimes m} \rightarrow
\iota^w(\mathcal O_U(m(K_{\mathcal F}+\Delta+D)))
\]
and so we may conclude.
%
\end{remark}

\begin{remark}
Set-up as in Proposition-Definition \ref{prop_adjunction}.
If in addition we have that  $K_{\mathcal F}+\Delta$ and $\epsilon(D)D$ are $\mathbb Q$-Cartier then 
\[n^*(K_{\mathcal F}+\Delta+\epsilon(D)D) \sim_{\mathbb Q} 
K_{\mathcal F_S}+{\rm Diff}_S(\mathcal F, \Delta).\]
\end{remark}

\begin{remark}\label{r_diff_b}
Set up as in Definition-Proposition \ref{prop_adjunction}. Let $B\ge 0$ be a  $\mathbb Q$-divisor which does not contain  $D$ in its support and such that 
 $B$ is $\mathbb Q$-Cartier
on $X \setminus Z$.
Then, from the construction it follows immediately that 
\[
{\rm Diff}_S(\cal F,\Delta+B) = {\rm Diff}_S(\cal F,\Delta)+n^wB.
\]
\end{remark}

\begin{remark}
\label{rmk_2}
Set up as in Definition-Proposition \ref{prop_adjunction}. We can compute the different using resolutions.  Indeed,
suppose that  $\pi\colon X' \rightarrow X$ is a log resolution of $(X, D+{\rm Supp}\, \Delta)$. Note that $\pi$ is not necessarily
a reduction of singularities of $\mathcal F$. We may write 
\[
K_{\mathcal F'}+\Delta'+ \epsilon(D)D'+E = \pi^*(K_{\mathcal F}+\Delta+\epsilon(D)D)
\]
where $E$ is $\pi$-exceptional, $\mathcal F' \coloneqq \pi^{-1}\mathcal F$, $D'= \pi_*^{-1}D$
and $\Delta' = \pi_*^{-1}\Delta$.
Let $S$ be the normalisation of $D$ and let $\mu\colon D' \rightarrow S$ be the induced morphism.

Then  
\[
\mu_*({\rm Diff}_{D'}(\cal F',\Delta')+E|_{D'}) = {\rm Diff}_S(\cal F,\Delta).
\]
\end{remark}

\medskip

In the case of higher codimension invariant centres, we also have a sub-adjunction statement:

\begin{propdef}
\label{prop_inv_subadj_analytic}
Let $X$ be a normal complex analytic space, let $\mathcal F$ be a foliation of rank $r$ 
on $X$, and let $\iota \colon D \rightarrow X$ 
be a closed analytic subset. 
Let $n\colon S \rightarrow D$ be the normalisation.

Suppose that 
\begin{enumerate}
\item 
there exists a $\mathbb Q$-divisor $\Delta\ge 0$ such that $K_{\mathcal F}+\Delta$ is $\mathbb Q$-Cartier;


\item $D$ is not contained in ${\rm Supp }\,\Delta\cup \sing\, {\mathcal F}$; 

\item $T_{\mathcal F}$ is locally free in a neighbourhood of the general point of $D$; and 
\item $D$ is $\mathcal F$-invariant.
\end{enumerate}

Then there exists a canonically defined {\bf restricted foliation}
$\mathcal F_{S}$ on $S$, and a canonically defined $\mathbb Q$-divisor ${\rm Diff}_S(\cal F,\Delta) \geq 0$, 
called the {\bf different}, 
such that 
\[
n^*(K_{\mathcal F}+\Delta) 
\sim_{\mathbb Q} K_{\mathcal F_S}+{\rm Diff}_S(\cal F,\Delta).
\] 
\end{propdef}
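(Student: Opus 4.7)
The approach mirrors Proposition-Definition~\ref{prop_adjunction}, specialised to the invariant case and adapted to allow $D$ of arbitrary codimension. The role that codimension one played in the construction of the Pfaff field is taken over by the combination of $\mathcal F$-invariance of $D$ and local freeness of $T_{\mathcal F}$ at the general point of $D$.

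First I reduce to a convenient local setting. Since the entire construction is canonical, I work analytic-locally near the general point of each irreducible component of $D$, and by Lemma~\ref{lem_uniqueness}(3) it suffices to produce $\mathcal F_S$ and ${\rm Diff}_S$ over a dense open subset of $S$. After shrinking I arrange that $D \cap ({\rm Supp}\,\Delta \cup \sing\,\mathcal F) = \emptyset$, $T_{\mathcal F}$ is locally free, and $m(K_{\mathcal F}+\Delta)$ is Cartier for some sufficiently divisible $m>0$.

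Next I restrict the Pfaff field to $D$. Composing $\phi \colon \Omega^r_X \to \mathcal O_X(K_{\mathcal F})$ with the inclusion $\mathcal O_X(K_{\mathcal F}) \hookrightarrow \mathcal O_X(K_{\mathcal F}+\Delta)$ and tensoring $m$-fold yields
\[\phi^{\otimes m} \colon (\Omega^r_X)^{\otimes m} \to \mathcal O_X(m(K_{\mathcal F}+\Delta)).\]
By the $\mathcal F$-invariance of $D$, the Pfaff field $\phi|_D$ factors through the surjection $\Omega^r_X|_D \to \Omega^r_D$ at the generic point of $D$; the kernel $N$ of $(\Omega^r_X)^{\otimes m}|_D \to (\Omega^r_D)^{\otimes m}$ is therefore sent by $\phi^{\otimes m}|_D$ to a section of the line bundle $\mathcal O_X(m(K_{\mathcal F}+\Delta))|_D$ vanishing at every generic point of $D$; since $D$ is reduced this section is zero, giving
\[\psi \colon (\Omega^r_D)^{\otimes m} \to \iota^*\mathcal O_X(m(K_{\mathcal F}+\Delta)).\]
Proposition~\ref{lem_tensor_lift}, applied in the analytic category, then lifts $\psi$ to
\[\tilde{\psi} \colon (\Omega^r_S)^{\otimes m} \to n^*\iota^*\mathcal O_X(m(K_{\mathcal F}+\Delta)).\]
Integrability of the associated distribution of rank $r$ and index $m$ follows from the integrability of $T_{\mathcal F}|_D \subset T_D$ at a general point of $D$: local freeness of $T_{\mathcal F}$ together with $\mathcal F$-invariance and the converse of Lemma~\ref{lem_invariant_vector_fields} (applicable at the general point of $D$ where $X$ is smooth) ensures that $T_{\mathcal F}$ preserves $I_D$, and the Lie bracket of two such local sections restricts to the Lie bracket of their restrictions in $T_D$.

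Finally I extract the foliation and the different. Lemma~\ref{lem_uniqueness}(4) produces a unique foliation $\mathcal F_S$ on $S$ whose Pfaff field agrees with the saturation of $\tilde{\psi}$ at the generic point, and Lemma~\ref{lem_sat_pfaff} applied to the rank one morphisms $(\Omega^r_S)^{\otimes m} \to \mathcal O_S(mK_{\mathcal F_S})$ and $\tilde{\psi}$ yields an effective Weil divisor $B$ on $S$ with
\[mK_{\mathcal F_S} + B \sim n^*(m(K_{\mathcal F}+\Delta));\]
I set ${\rm Diff}_S(\mathcal F,\Delta) \coloneqq \tfrac{1}{m}B$. The main obstacle is the invocation of Proposition~\ref{lem_tensor_lift} in the complex analytic setting, which reduces to the analytic version of Seidenberg's lifting of derivations in Lemma~\ref{lem_g_seidenberg}; once that is in place, the essentially new point compared with Proposition-Definition~\ref{prop_adjunction} is verifying that the construction of $\psi$ survives in higher codimension, for which the hypotheses on $T_{\mathcal F}$ and on invariance of $D$ are exactly what is needed.
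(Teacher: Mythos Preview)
Your proposal is correct and follows essentially the same approach as the paper: factor the $m$-th tensor power of the Pfaff field through $(\Omega^r_D)^{\otimes m}$ using invariance and the fact that $m(K_{\mathcal F}+\Delta)$ is Cartier, lift to the normalisation via Proposition~\ref{lem_tensor_lift} (noting that the analytic version goes through by the same local multi-derivation argument), and read off the different as the effective divisor $B$ comparing $\tilde{\psi}$ with the Pfaff field of the restricted foliation. The only cosmetic difference is that the paper defines $\mathcal F_S$ directly by taking $T_{\mathcal F_S}$ to be the annihilator in $T_S$ of the image of $n^*\ker(\Omega^1_X \to T_{\mathcal F}^*)$ in $\Omega^1_S$, whereas you extract $\mathcal F_S$ from the integrable distribution $\tilde{\psi}$ via Lemma~\ref{lem_uniqueness}(4); these agree at the generic point of $S$ and hence globally.
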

\begin{proof}
We first construct the restricted foliation $\mathcal F_S$.  
Set 
\[
N \coloneqq \ker (\Omega^1_X \rightarrow T_{\mathcal F}^*)
\] and let $N_S$ be the image of $n^*N$ under the natural map 
$n^*\Omega^1_X \rightarrow \Omega^1_S$.  We then define $\mathcal F_S$ by taking $T_{\mathcal F_S}$ to be the annihilator of 
$N_S$.  Since $T_{\mathcal F}$ is locally free in a neighbourhood of the general point of $D$ it follows that 
the rank of $T_{\mathcal F_S}$ is equal to the rank of $T_{\mathcal F}$.


The remaining claims then follow  as in the proof of
 Proposition-Definition \ref{prop_adjunction}.  We briefly sketch this now.
 Let $m>0$ be such that $m(K_{\mathcal F}+\Delta)$ is Cartier and set $L \coloneqq \mathcal O_X(m(K_{\mathcal F}+\Delta))$.  
 Since $D$ is $\cal F$-invariant, the restriction of the morphism 
 $\phi \colon (\Omega^r_X)^{\otimes m} \rightarrow L$ to $D$ factors through 
 $\psi \colon (\Omega^r_D)^{\otimes m} \rightarrow L\vert_D$.  This lifts to a morphism 
 $\tilde{\psi} \colon (\Omega^r_S)^{\otimes m} \rightarrow n^*L$ by Proposition \ref{lem_tensor_lift} (we remark that constructing this lift is local 
 about any point of $x \in D$, and so by considering $\psi$ as a multi-derivation on $\mathcal O_{D, x}$, we see that the proof of Proposition 
\ref{lem_tensor_lift} works equally well here).

We conclude by observing that around a general point of $S$, $\tilde{\psi}$ is the $m$-th tensor power of the Pfaff field 
$\Omega^r_S \rightarrow \mathcal O_S(K_{\mathcal F_S})$ and so we deduce that there exists an effective divisor $B$ such that 
$L|_S \cong \mathcal O_S(mK_{\mathcal F_S}+B)$.  We conclude by taking ${\rm Diff}_S(\cal F,\Delta) \coloneqq \frac{1}{m}B$.
\end{proof}

\begin{remark}
Set up as in Definition-Proposition \ref{prop_inv_subadj_analytic}. If $r=1$ then (3) is automatically satisfied since, by assumption,  $K_{\cal F}$ is Cartier at the generic point of $D$. 

Note moreover that, under the set-up of Proposition-Definition \ref{prop_adjunction}, if $D$ is an $\cal F$-invariant
integral subscheme of codimension one, then the construction of $(\mathcal F_S, {\rm Diff}_S(\cal F,\Delta))$ in Proposition-Definition \ref{prop_adjunction} coincides with its construction in Proposition-Definition \ref{prop_inv_subadj_analytic}.
\end{remark}

\subsection{Calculation of the different in some special cases}

The following proposition computes the different in some special cases:

\begin{proposition}
\label{prop_explicit_calc}
Let $X$ be a normal scheme, let $\mathcal F$ be a foliation of rank $r$ 
on $X$, let $D \subset X$ be a reduced subscheme of codimension one such that  either every component of $D$ is $\cal F$-invariant or  every component of 
$D$ is not $\cal F$-invariant.
Suppose that $K_{\mathcal F}$ and $\epsilon(D)D$ are $\mathbb Q$-Cartier.
Let $n\colon S \rightarrow D$ be the normalisation, let $\cal F_S$ be the restricted foliation on $S$ 
 and 
let $P$ be a prime divisor on $S$.

Then the following hold:

\begin{enumerate}
\item If $n(P)$ is contained in $\sing\, {\mathcal F}$ then 
$m_P{\rm Diff}_S(\cal F) >0$.

\item Suppose that $D$ is Cartier, $\epsilon(D) = 1$ and that  
$T_{\mathcal F}$ is locally free and generated by $\partial_1, \dots, \partial_r$.
If $f$ is a local parameter for $D$ and 
$(\partial_1(f), \dots, \partial_r(f)) \subset I_{n(P)}$ then $m_P{\rm Diff}_S(\cal F)>0$.
Moreover, $m_P{\rm Diff}_S(\cal F)$ is an integer.
Conversely, suppose that $X$ and $D$ are smooth at the generic point of $n(P)$,
and that $m_P{\rm Diff}_S(\cal F)>0$ then 
$(\partial_1(f), \dots, \partial_r(f)) \subset I_{n(P)}$.

\item Suppose that $\epsilon(D) = 0$ and that $n(P)$ is not contained
in $\sing\, {\mathcal F}$.  
Then $m_P{\rm Diff}_S(\cal F) = \frac{m-1}{m}\epsilon(\mathcal F_S, P)$ where $m$ is a positive integer that divides the
 Cartier index of
$K_{\mathcal F}$ at the generic point of $n(P)$.  
If the index one cover associated to $K_{\mathcal F}$
is smooth in a neighbourhood of the preimage of the generic point of $n(P)$,
then $m$ equals the Cartier index of $K_{\mathcal F}$ 
at the generic point of $n(P)$.

\item Suppose that $\epsilon(D) = 1$ and that 
$X$ has quotient singularities in a neighbourhood of the
generic point of $n(P)$.  
Then $m_P{\rm Diff}_S(\cal F)$
is of the form $\frac{a+\epsilon(\mathcal F_S, P)(m-1)}{m}$ where $a \geq 0$ is an integer and 
where $m$ divides the Cartier index of $K_{\mathcal F}$ at the generic point of $n(P)$.
\end{enumerate}

\end{proposition}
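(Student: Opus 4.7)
The plan is to exploit the explicit local construction of the restricted Pfaff field from Lemma \ref{lem_cartier_adj} and Proposition-Definition \ref{prop_adjunction}. By Lemma \ref{lem_sat_pfaff}, the effective divisor $B$ with $mK_{\cal F_S}+B\sim m(K_{\cal F}+\epsilon(D)D)|_S$ is exactly the vanishing divisor of the lifted Pfaff field $\tilde{\psi}\colon(\Omega^{r-\epsilon(D)}_S)^{\otimes m}\to n^w\cal O_X(m(K_{\cal F}+\epsilon(D)D))$, so that $m_P{\rm Diff}_S(\cal F)$ equals $1/m$ times the vanishing order of $\tilde{\psi}$ along $P$. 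Each of the four cases therefore reduces to a local analysis of the zeros of $\tilde{\psi}$ near the generic point of $n(P)$.

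For (1), localising around the generic point of $n(P)$ we may assume $K_{\cal F}$ and $\epsilon(D)D$ are Cartier, so we may take $m=1$. The hypothesis $n(P)\subset\sing\cal F$ means that in a trivialisation of $\cal O_X(K_{\cal F})$ the image of $\phi$ lies in $I_{n(P)}$. This vanishing is preserved by restriction to $D$ (and, when $\epsilon(D)=1$, by division by a local equation $f$ of $D$, since the $(df\wedge\cdot)$ construction only absorbs a single order of vanishing in $f$, not any extra vanishing in $I_{n(P)}$). Invoking Proposition \ref{lem_tensor_lift} together with Lemma \ref{lem_vanishing_lift} shows the vanishing persists after lifting to $S$, giving $m_P{\rm Diff}_S(\cal F)>0$. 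For (2), since $D$ is Cartier and $T_{\cal F}$ is locally free, we may again take $m=1$, which yields integrality of $m_P{\rm Diff}_S(\cal F)$. Trivialising $\cal O_X(K_{\cal F})$ by $(\partial_1\wedge\cdots\wedge\partial_r)^*$ and using the wedge identity
\[
(df\wedge\tilde{\alpha})(\partial_1,\dots,\partial_r)=\sum_{i=1}^r(-1)^{i+1}\partial_i(f)\,\tilde{\alpha}(\partial_1,\dots,\widehat{\partial_i},\dots,\partial_r),
\]
we see that if every $\partial_i(f)\in I_{n(P)}$ then $\psi'$ vanishes along $n(P)\cap D$ after restriction, giving $m_P{\rm Diff}_S(\cal F)>0$. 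For the converse, the smoothness assumption lets us pick local coordinates $f,x_1,\dots,x_{n-1}$ on $X$ and evaluate $\psi'$ on the basic $(r-1)$-forms $dx_{j_1}\wedge\cdots\wedge dx_{j_{r-1}}$; the resulting expressions are minors of $(\partial_i(x_j))$ scaled by the $\partial_i(f)$, and non-vanishing of $\psi'$ at a general point of $n(P)$ forces at least one $\partial_i(f)$ to be a unit there.

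For (3) and (4), we pass to the quasi-\'etale cyclic cover $q\colon V\to U$ of order $m$ used in the proof of Proposition-Definition \ref{prop_adjunction}, so that $K_{\cal F_V}$ is Cartier on $V$, and let $\nu\colon S_V\to D_V=q^{-1}(D)$ be the normalisation with induced map $p\colon S_V\to n^{-1}(U\cap D)$. The vanishing order of $\tilde{\psi}$ at $P$ is then read off by pulling back to $S_V$, applying cases (1)--(2) on the cover, and descending along $p$. In (3), with $\epsilon(D)=0$ and $n(P)\not\subset\sing\cal F$, the Pfaff-field contribution on $V$ is zero, so the entire different is produced by ramification of $p$; a direct calculation shows this ramification is exactly $\tfrac{m-1}{m}$ when $P$ is not $\cal F_S$-invariant (and $0$ when $P$ is $\cal F_S$-invariant, because then the extra tangent direction on $S_V$ is absorbed into $T_{\cal F_{S_V}}$ rather than into $B$), yielding the stated formula; the addendum follows since a smooth index-one cover makes $m$ equal to the full Cartier index. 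In (4), combining the integer tangency contribution $a\ge 0$ produced by case (2) on the smooth-enough cover with the fractional ramification contribution from case (3) gives the expression $\tfrac{a+\epsilon(\cal F_S,P)(m-1)}{m}$.

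The main obstacle is case (4), where one must simultaneously manage a cover resolving the quotient singularity of $X$, the possible non-normality of $D_V$, and the interplay between the tangency of $\cal F_V$ with $D_V$ and the ramification of $p$. Verifying that the tangency contribution and the ramification contribution combine cleanly into an integer $a$ plus $\tfrac{m-1}{m}\epsilon(\cal F_S,P)$, with no cross-term, is the most delicate step, and requires careful compatibility between Proposition \ref{lem_tensor_lift} and the Galois descent implicit in the construction of the different on $S$.
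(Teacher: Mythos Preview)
Your overall strategy---reading $m_P{\rm Diff}_S(\cal F)$ as the vanishing order of the restricted Pfaff field and then computing locally---is the same as the paper's. Item (2) is essentially identical to the paper's argument. However, there are genuine gaps in the remaining items.

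In item (1) you write ``localising around the generic point of $n(P)$ we may assume $K_{\cal F}$ and $\epsilon(D)D$ are Cartier, so we may take $m=1$.'' This is false: $n(P)$ is a codimension-two point of $X$, and a $\mathbb Q$-Cartier divisor need not become Cartier there. The paper proves the Cartier case first (obtaining $m_P{\rm Diff}_S(\cal F)\geq 1$) and then handles the general case by passing to a quasi-\'etale cyclic cover $q\colon V\to U$ on which $q^*K_{\cal F}$ is Cartier, invoking \cite[Corollary 5.14]{Druel21} to ensure the preimage $P'$ is still in $\sing\cal F'$, and using the Riemann--Hurwitz comparison
\[
m_{P'}{\rm Diff}_{S'}(\cal F') = e\, m_P{\rm Diff}_S(\cal F) - \epsilon(\cal F_S,P)(e-1)
\]
(from \cite[Lemma 3.4]{Druel21}) to descend the inequality. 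You never articulate this formula, yet it is the engine behind items (1), (3), and (4); your ``direct calculation'' and ``ramification contribution'' phrases are gesturing at it without stating it.

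Two further points. First, the addendum in item (3)---that $m$ equals the full Cartier index when the index-one cover $X'$ is smooth---is not the triviality you suggest. The issue is that even for smooth $X'$ the pullback $D'=q^{-1}(D)$ could a priori be non-normal or reducible, in which case the ramification index of $S'\to S$ along $P'$ would be strictly smaller than the degree of $q$. The paper spends a paragraph proving $D'$ is normal and irreducible near $P'$, via a Seidenberg-type argument (\cite[Theorem 5]{MR0212027}) showing that if $D'$ were singular along $P'$ then $P'$ would be $\cal F_{S'}^\circ$-invariant, forcing $n'(P')\subset\sing\cal F'$, contrary to hypothesis. Second, you over-complicate item (4): once the Riemann--Hurwitz formula above is in hand, one simply passes to a quasi-\'etale cover with $X$ smooth (possible by the quotient-singularity hypothesis), reduces to $T_{\cal F}$ locally free via \cite[Corollary 1.4]{Hartshorne80}, and applies item (2). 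There is no ``delicate cross-term'' to worry about; the formula packages the descent in one line.
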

\begin{proof}
All these assertions are local about the generic point of $n(P)$, so at any point 
we may freely replace $X$
by a neighbourhood of the generic point of $n(P)$.

\medskip

Proof of item (1).  Let us first suppose that $K_{\mathcal F}$ and $\epsilon(D)D$ are 
Cartier in a neighbourhood of $P$.
By assumption, the morphism 
$\phi\colon \Omega^r_X \rightarrow \mathcal O_X(K_{\mathcal F})$ takes values 
in $I\mathcal O_X(K_{\mathcal F})$ where $I$ is an ideal sheaf, 
whose co-support contains $n(P)$.

If  $\epsilon(D) = 0$ then  by Lemma \ref{lem_vanishing_lift} the lift of the restriction of $\phi$ to $D$ 
factors through 
\[
\Omega^r_S \rightarrow \mathcal O_S(n^*K_{\mathcal F} - P).
\]
If $\epsilon(D) = 1$ then, as in the proof of Lemma \ref{lem_cartier_adj}, 
we may define  $ \psi'\colon \Omega^{r-1}_X \rightarrow \mathcal O_X(K_{\mathcal F}+D)$ by 
$ \psi'\coloneqq \frac {\phi(df \wedge \cdot)}f$ for some choice $f$ 
of a local parameter for $D$. 
Then $\psi'$ still factors
through $I\mathcal O_X(K_{\mathcal F}+D)$ and so by Lemma \ref{lem_vanishing_lift} the lift of the 
restriction to $D$ factors through 
\[
\Omega^{r-1}_S \rightarrow \mathcal O_S(n^*(K_{\mathcal F}+D)- P).
\] 
In both cases, we have that 
$m_P{\rm Diff}_S(\cal F)\ge 1$, 
as required.

We now consider the general case.  Let  $q\colon V\to U$ be  a quasi-\'etale cyclic cover,  
where $U$ is a neighbourhood of the generic point of $P$ in $X$, and 
such that $q^*K_{\cal F}$ and $q^*(\epsilon(D)D)$ are Cartier divisors.  Let $\cal F'=q^{-1}\cal F$. 
%
%
%
%
%
 \cite[Corollary 5.14]{Druel21} guarantees that $P$ is contained
in $\sing\, {\mathcal F}$ if and only 
$P'\coloneqq q^{-1}(P)$ is contained
in $\sing\, \mathcal F'$. 
Let $S'$ denote the normalisation of $q^{-1}(D)$ and let $\cal F_{S'}$ and ${\rm Diff}_{S'}(\cal F')$
be respectively the restricted foliation and the different associated to $\mathcal F'$ on $S'$.
We have a finite morphism $p\colon S' \rightarrow S$
and we have that 
\[
K_{\mathcal F_{S'}}+{\rm Diff}_{S'}(\cal F') = p^*(K_{\mathcal F_S}+{\rm Diff}_S(\cal F)).
\]

Let $e$ be the ramification index of $p$ along $P'$.
By applying 
\cite[Lemma 3.4]{Druel21} 
(see also \cite[Proposition 2.2]{CS20}) we get an equality
\begin{align}
\label{rw_comp}
m_{P'}{\rm Diff}_{S'}(\cal F') = em_P{\rm Diff}(\cal F) - \epsilon(\cal F_S,P)(e-1).
\end{align}
Since $m_{P'}{\rm Diff}_{S'}(\cal F')\geq 1$ this implies that $m_P{\rm Diff}_S(\cal F) >0$.

\medskip

Proof of item (2). In this case, the Pfaff field 
 $\phi\colon \Omega^r_X \rightarrow \mathcal O_X(K_{\cal F})$
associated to $\mathcal F$ 
is given explicitly  by 
$\omega\mapsto \omega(\partial_1\wedge \dots \wedge \partial_r)$ where $\omega$
is a local section of $\Omega^r_X$.  
It follows that if $\eta$ is any local section of $\Omega^{r-1}_X$  then 
\[
\phi(df \wedge \eta) = \sum_{i=1}^r (-1)^i \partial_i(f) 
\eta(\partial_1 \wedge \dots \wedge \hat{\partial_i} \wedge \dots \wedge \partial_r).
\]
So if $(\partial_1(f), \dots, \partial_r(f)) \subset I_{n(P)}$, then Lemma \ref{lem_vanishing_lift} implies that
the restricted Pfaff field 
$\Omega^{r-1}_S \rightarrow n^*(\mathcal O_X(K_{\mathcal F}+D))$  
vanishes along $P$.
Since $K_{\cal F}$ is Cartier, it follows that  $m_P{\rm Diff}_S(\cal F)$ is an integer.

Conversely, suppose that $D$ and $X$ are smooth at the generic
point of $P$ and there exists an $i$ such that $\partial_i(f) \notin I_P$.
Up to
relabelling we may take $i = 1$ and after 
localising about the generic point of $P$ we may assume $\partial_1(f)$ is a unit.  For $j\geq 2$, we define
\[
\partial_j' \coloneqq \partial_j - \frac{\partial_j(f)}{\partial_1(f)}\partial_1.
\]
We have that $\partial_1, \partial_2', \dots, \partial_r'$ generate $T_{\mathcal F}$
in a neighbourhood of the generic point of $P$ and so, up to replacing $\partial_j$ by $\partial'_j$,
we may freely assume that $\partial_j(f) = 0$ for $j \geq 2$.  A direct calculation as above
shows that the restricted Pfaff field 
$ \Omega^{r-1}_D \rightarrow \mathcal O_D(K_{\mathcal F}+D)$
does not vanish along $P$, as required.

\medskip

Proof of item (3). Suppose for the moment that $K_{\mathcal F}$ is Cartier. 
In this case, the natural map $\Omega^r_D \rightarrow \mathcal O_D(K_{\mathcal F})$
vanishes along $P$ if and only if 
$\Omega^r_X \rightarrow \mathcal O_X(K_{\mathcal F})$ vanishes along
$P$, and so $m_P{\rm Diff}_S(\cal F) = 0$ as required.

We now handle the case where $K_{\mathcal F}$ is $\mathbb Q$-Cartier. After possibly shrinking $X$, 
we consider the index one cover  $q\colon X' \rightarrow X$ associated to 
$K_{\mathcal F}$ and let $S'$ be the normalisation of $q^{-1}(D)$.
Note that the ramification index of $S' \rightarrow S$ along $P' \coloneqq q^{-1}(P)$
divides the Cartier index of $K_{\mathcal F}$ and that if $\cal F'\coloneqq q^{-1}\cal F$ 
then \cite[Corollary 5.14]{Druel21} implies that $P'$ is not contained in $\sing\, \cal F'$. 
We may then argue as in the proof of Item (1) and use Equation \eqref{rw_comp} to conclude.

To check the last claim, it suffices to consider the case where $\epsilon(\mathcal F_S, P) = 1$.
After possibly replacing $X$ by a neighbourhood of the generic point of $P$, we may assume that 
the index one cover $q\colon X' \rightarrow X$ associated to $K_{\cal F}$ is such that $X'$ is smooth and it is totally ramified along $P$. 
In particular, we may assume that  $D'\coloneqq q^{-1}(D)$
is connected. 
Let $m$ be the Cartier index of $K_{\mathcal F}$ and let  $\cal F'\coloneqq q^{-1}\cal F$.
Since $T_{\mathcal F'}$ is reflexive, by \cite[Corollary 1.4]{Hartshorne80} 
it is locally free away 
from a subset of codimension at least three in $X$. Thus, we may assume that 
$T_{\mathcal F'}$ 
is locally free. Let $P'\coloneqq q^{-1}(P)$. 

We claim that $D'$
is normal and irreducible.  Assuming the claim we see that the 
ramification index of the induced morphism $D' \rightarrow D$ 
along $P'$ is $m$, and we may conclude by Equation  \eqref{rw_comp}.

To prove the claim, first recall that $D'$ is connected.
Suppose that $D'$ is not smooth in a neighbourhood of $P'$.   
By 
\cite[Theorem 5]{MR0212027} for any local generator $\partial \in T_{\mathcal F'}$ 
we have that $\partial(I_{\sing {D'}}) \subset  I_{\sing {D'}}$.
Thus if we denote by $\mathcal F_{S'}^\circ$ the integrable distribution induced on $S'$, where $n'\colon S'\to D'$ is the normalisation
and  whose existence is guaranteed by Lemma
\ref{lem_cartier_adj},
 then for any local generator $\partial \in T_{\mathcal F_{S'}^\circ}$
we have $\partial(I_{P'}) \subset I_{P'}$.
 Since we assumed that $P$ is 
not $\mathcal F_S$-invariant, it follows that $P'$ is not $\mathcal F_{S'}$-invariant and so 
there is a local generator $\delta \in T_{\mathcal F_{S'}}$ such that in a neighbourhood of the generic point of 
$P'$,  $\delta(I_{P'})$ is not contained in $I_{P'}$.  On the other hand, $P'$ is ${\mathcal F_{S'}^\circ}$-invariant and so
it follows that in a neighbourhood of the generic point of $P'$ there is some local section $f \in I_{P'}$ 
such that $f\delta$ is a local generator of $T_{\mathcal F_{S'}^\circ}$.  Thus, some 
local generator of $T_{\mathcal F_{S'}^\circ}$ must
vanish along $P'$, which in turn implies that some local generator
of $T_{\mathcal F'}$ must vanish along $n'(P')$ and, in particular, $n'(P')$ is contained in $\sing\, \cal F'$.  
\cite[Corollary 5.14]{Druel21}
implies that $n(P)$ is contained in $\sing\, {\mathcal F}$, contrary to our hypothesis.

\medskip

Proof of item (4). 
As in the proof of item (1) using Equation \ref{rw_comp} we may freely replace $X$ by 
a quasi-\'etale cover and 
so may assume that $X$ is smooth. 
By \cite[Corollary 1.4]{Hartshorne80} we may assume that $T_{\mathcal F}$ is locally free.
We may then conclude by Item (2).
\end{proof}

%
%

\subsection{Adjunction of singularities}

\begin{lemma}
\label{lem_weak_resolution}
Let $X$ be a smooth variety, let $\mathcal F$ be a foliation of rank $r$ on $X$ and 
let $D$ be a smooth divisor on $X$
such that $D$ is not $\mathcal F$-invariant.  Let $Z \subset D$ be a codimension one subvariety in $D$.

Then there exists a log resolution $\mu\colon X' \rightarrow X$ of $(X, D)$ such that if we write 
$(K_{\mathcal F'}+D')\vert_{D'} = K_{\mathcal F_{D'}}+{\rm Diff}_{D'}(\cal F)$ where 
$D' = \mu_*^{-1}D$, 
$\mathcal F' = \mu^{-1}\mathcal F$
and $\mathcal F_{D'}$ is the restricted foliation on $D'$, 
 then $Z'$ 
is not contained in the support 
of ${\rm Diff}_{D'}(\cal F')$, where $Z'\subset D'$ is the strict transform of $Z$ through the induced morphism $D'\to D$.
In particular, $\cal F'$ is smooth in a neighbourhood of the generic point of $Z'$. 
\end{lemma}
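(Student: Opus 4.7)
The plan is to realise $\mu$ as an iterated blow-up along smooth codimension two centres, using Proposition \ref{prop_explicit_calc}(2) as the main tool. Since the intermediate models $(X_k,D_k)$ will be smooth and $D_k$ not $\cal F_k$-invariant (as $D$ is not $\cal F$-invariant), the codimension one subvariety $Z_k \subset D_k$ lies in the support of ${\rm Diff}_{D_k}(\cal F_k)$ precisely when $\bigl(\partial_1(f_k),\dots,\partial_r(f_k)\bigr) \subset I_{Z_k}$, where $f_k$ is a local equation of $D_k$ and $\partial_1,\dots,\partial_r$ are local generators of $T_{\cal F_k}$ near $\eta_{Z_k}$. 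Note that $T_\cal F$ is reflexive on the smooth variety $X$, hence by \cite[Corollary 1.4]{Hartshorne80} locally free outside a closed subset of codimension at least three; since $Z_k$ has codimension two in the smooth variety $X_k$, such local generators exist at $\eta_{Z_k}$. Establishing that $(\partial_i(f_k))_i \not\subset I_{Z_k}$ will therefore also yield the ``in particular'' part of the statement.

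First I would fix a log resolution $\mu_0\colon X_0 \to X$ of $(X,D)$ with the additional property that the strict transform $Z_0 \subset D_0$ of $Z$ is smooth of codimension one in the smooth divisor $D_0$; this can be arranged by standard resolution of singularities. If $Z_0$ is not contained in the support of ${\rm Diff}_{D_0}(\cal F_0)$ we are done. Otherwise I would blow up $X_0$ along $Z_0$ to obtain $\mu_1\colon X_1 \to X_0$. Since $Z_0$ is smooth of codimension one in the smooth divisor $D_0$, the strict transform $D_1$ remains smooth and meets the new exceptional divisor $E_1$ transversally along a smooth codimension two subvariety $Z_1 := D_1 \cap E_1$, which plays the role of the new ``strict transform'' of $Z$ in $D_1$; thus the log resolution property is preserved. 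I would then iterate: at each stage, either $Z_k$ drops out of the support of the different and the process terminates, or I blow up $X_k$ along $Z_k$.

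The main step is to prove termination. For this I work at the generic point of $Z$: $A \coloneqq \mathcal O_{X,\eta_Z}$ is a two-dimensional regular local ring, so I may choose parameters $f,g$ with $I_D = (f)$ and $I_Z = (f,g)$, and fix local generators $\partial_1,\dots,\partial_r$ of $T_\cal F$. A direct computation in the iterated blow-up charts, tracking how the saturation of $\pi^*T_\cal F$ determines local generators of $T_{\cal F_k}$ at $\eta_{Z_k}$, shows that the obstruction ``$Z_k$ lies in the support of ${\rm Diff}_{D_k}(\cal F_k)$'' is equivalent to $\partial_i(f) \in (f, g^k)$ for every $i=1,\dots,r$. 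If this persisted for every $k$, then Krull's intersection theorem applied in $A/(f)$ (whose maximal ideal is $(g)$) would give $\partial_i(f) \in \bigcap_k (f, g^k) = (f)$ for every $i$, so $T_\cal F$ would preserve the ideal $I_D$; Lemma \ref{lem_invariant_vector_fields} would then force $D$ to be $\cal F$-invariant, contradicting the hypothesis. Hence after finitely many blow-ups $Z'$ lies outside the support of the different. The main obstacle is precisely this local coordinate calculation, in particular verifying that the exponent of $g$ in the obstruction ideal increases by exactly one with each blow-up.
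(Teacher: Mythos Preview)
Your overall strategy coincides with the paper's: iterated blow-ups of $X$ along the smooth codimension-two centres $Z_k\subset D_k$, using Proposition~\ref{prop_explicit_calc}(2) as the criterion for $Z_k$ to drop out of the different. The divergence is in the termination argument, and there the claimed equivalence is false.

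Take $r=1$, $D=\{x=0\}$, $Z=\{x=y=0\}$, and $\partial=y\,\partial_x+\partial_y$. Then $\partial(f)=y\in(f,g)$ but $\partial(f)\notin(f,g^2)$, so your equivalence predicts that one blow-up removes the obstruction. But in the chart $x=x_1y_1$, $y=y_1$ the pullback $y_1^{-1}(y_1-x_1)\partial_{x_1}+\partial_{y_1}$ is meromorphic; the saturated generator of $T_{\mathcal F_1}$ is $y_1\partial=(y_1-x_1)\partial_{x_1}+y_1\partial_{y_1}$, whose value on the new equation $x_1$ is $y_1-x_1\in I_{Z_1}$. The obstruction is still present; a second blow-up is needed. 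More generally, whenever the $\partial_y$-coefficient is a unit the saturation step multiplies by the exceptional coordinate and the ``order of tangency'' along $Z$ does \emph{not} drop, so the exponent of $g$ in your obstruction ideal does not increase by one at that step. The Krull-intersection contradiction therefore does not follow from your computation as stated.

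The paper handles termination differently. It fixes a single vector field $\partial_1\in T_{\mathcal F}$ with $\partial_1(I_D)\not\subset I_D$, writes its $\partial_x$-coefficient as $y^k+x\alpha$, and tracks only the rank-one subfoliation it generates. A direct check in the blow-up chart shows that after \emph{at most two} blow-ups the exponent $k$ drops by one (one blow-up suffices when the $\partial_y$-coefficient lies in $(y)$; otherwise two are needed, the first producing exactly the saturation phenomenon above, the second then lowering $k$). Induction on $k$ terminates the process. Since any holomorphic multiple of the pulled-back $\partial_1$ lies in $T_{\mathcal F'}$, the resulting section witnesses $(\partial_1'(f'),\dots,\partial_r'(f'))\not\subset I_{Z'}$. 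Your Krull-intersection idea can in fact be salvaged along these lines (two blow-ups force the exponent up by one, so infinitely many blow-ups still give $\partial_1(f)\in\bigcap_k(f,g^k)=(f)$, a contradiction), but the claim that one blow-up always does the job is the gap.
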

\begin{proof}
We may freely shrink about the generic point of $Z$, 
and so we may assume that 
$T_{\mathcal F}$ is locally free \cite[Corollary 1.4]{Hartshorne80}, 
and generated by $\partial_1, \dots, \partial_r$.

We will construct a model $\mu\colon X' \rightarrow X$ such that, using the same notation
as in the statement of the Lemma, 
there is a vector field $\partial \in T_{\mathcal F'}$ 
such that $\partial(I_{D'})$ is not contained in $I_{Z'}$. 
By (1) and  (2) of Proposition \ref{prop_explicit_calc} this will be our desired model.

Up to relabelling we may assume that $\partial_1 \in T_{\mathcal F}$
does not leave $D$ invariant.

%

%

Shrinking to a neighbourhood of the generic point of $Z$ we may find coordinates $(x, y, z_1, \dots, z_q) = (x, y, \underline{z})$
where $\dim X = q+2$
such that  
$D = \{x = 0\}, Z = \{x = y = 0\}$ and 
\[
\partial_1 = a(x, y, \underline{z})\partial_x+b(x, y, \underline{z})\partial_y+\sum_{j = 1}^q c_j(x, y, \underline{z})\partial_{z_j}
\]
where $a,b,c_1,\dots,c_q\in \mathcal O_{X, Z}$.
If $a$ is a unit in $\mathcal O_{X, Z}$ then we may take $X' = X$ and we are done, so we may assume that $a \in (x, y)$.
Since $D$ is not $\partial_1$-invariant we also see that $a \notin (x)$ and so (up to multiplying by a unit in $\mathcal O_{X, Z}$)
we may write  $a = y^k+x\alpha(x, y, \underline{z})$ where $k\ge 0$ and for some function $\alpha\in \mathcal O_{X, Z}$.  If $k = 0$, then $a$ is a unit and we are done. Thus, we assume that $k \ge 1$.
Note that $k$ is the order of tangency between $\partial_1$ and $D$ along $Z$.  We will show that after a finite sequence of 
blow ups we can reduce the order of tangency between the strict transform of $D$ and the pullback of the foliation $\mathcal F_1$ defined by $\partial_1$.
Assuming this can be done, then we may conclude by arguing by induction on $k$.

Consider the local chart of the blow up $b_1\colon X_1 \rightarrow X$ in $Z$ given by $x = x_1y_1, y = y_1, z_i = z_i$.  In this chart, the 
strict transform of $D$ is given by $\{x_1 = 0\}$ and the transform $Z_1$ of $Z$ (considered as subvariety of $D$) is given by $\{x_1 = y_1 = 0\}$.
The pullback of $\partial_1$ as a meromorphic vector field is 
\[[(y_1^{k-1}+x_1\alpha(x_1y_1, y_1, \underline{z})-\frac{x_1}{y_1}b(x_1y_1, y_1, \underline{z})]\partial_{x_1}+
b(x_1y_1, y_1, \underline{z})\partial_{y_1}+
\dots.\]


If the pullback of $\partial_1$ is in fact holomorphic, then in a neighbourhood of the transform $Z_1$ of $Z$, 
$b_1^{-1}\mathcal F$ is defined by a vector field of the form $(y_1^j+x_1\alpha_1)\partial_{x_1} +\dots$
where $j <k$ and for some function $\alpha_1\in \mathcal O_{X_1, Z_1}$.
Otherwise, $b_1^{-1}\mathcal F$ is defined by a vector field of the form 
\[
\delta = (y_1^k+x_1\alpha_1)\partial_{x_1}+y_1\beta_1\partial_{y_1}+\dots
\]
 for some functions $\alpha_1,\beta_1\in \mathcal O_{X_1, Z_1}$.  It is easy to check that if $b_2\colon X_2 \rightarrow X_1$ is the blow up
in $Z_1$, then the pullback of $\delta$ is holomorphic.  In particular, if we take the following coordinates on the blow up
$x_1 = x_2y_2, y_1 = y_2, z_i = z_i$, then in a neighbourhood of $Z_2$, the transform of $Z_1$, $b_2^{-1}b_1^{-1}\mathcal F_1$
is defined by a vector field of the form $(y_2^j+x_2\alpha_2)\partial_{x_2}+\dots$ where $j<k$ and for some function $\alpha_2\in \mathcal O_{X_2, Z_2}$
Thus, after at most two blow ups centred over $Z$ we see that we can reduce the order of tangency between the strict transform of $D$
and the pullback of $\mathcal F_1$, as required.
\end{proof}

\begin{theorem}
\label{thm_adj_sing}
Let $X$ be a normal variety, 
let $\mathcal F$ be a foliation of rank $r$  on $X$ and let $D$ be a prime divisor which is not 
$\mathcal F$-invariant.
Let $0 \leq \Delta = D+\Delta'$ be a $\mathbb Q$-divisor on $X$ such that  $D$ is not contained in the support of 
$\Delta'$.  Let $Z\subset X$ be a subvariety such that $Z\cap D$ is of codimension at least two in $D$ and such that $K_{\cal F}$ and $D$ are $\mathbb Q$-Cartier on $X\setminus Z$. 
Suppose that $(\mathcal F, \Delta)$
is canonical (resp. log canonical, resp. terminal, resp. log terminal).  Let $S \rightarrow D$ be the normalisation and let  $\Delta_S\coloneqq {\rm Diff}_S(\cal F,\Delta')$.

Then $(\mathcal F_S, \Delta_S)$
is canonical (resp. log canonical, resp. terminal, resp. log terminal).
\end{theorem}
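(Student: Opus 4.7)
The plan is to reduce the statement to a discrepancy comparison on a suitable log resolution of $(X, \Delta)$, using the explicit expression of the different given by Remark \ref{rmk_2} together with the control on resolutions provided by Lemma \ref{lem_weak_resolution}.

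Fix a prime divisor $P$ over $S$. After localising around the generic point of its centre, we may assume that $K_{\mathcal F}$ and $D$ are $\mathbb Q$-Cartier. Choose a log resolution $\mu\colon X' \to X$ of $(X, \Delta)$ with $D' \coloneqq \mu_*^{-1}D$ smooth and such that the induced morphism $\bar\mu\colon D' \to S$ extracts $P$ as a prime divisor $\tilde P$ on $D'$. By applying Lemma \ref{lem_weak_resolution} (and passing to a further log resolution if necessary to preserve SNC), we may additionally assume that the support of $\text{Diff}_{D'}(\mathcal F')$ does not contain $\tilde P$ and that $\mathcal F' \coloneqq \mu^{-1}\mathcal F$ is smooth at the generic point of $\tilde P$.

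By Remark \ref{rmk_2} we may write
\[
K_{\mathcal F'} + \mu_*^{-1}\Delta + E = \mu^*(K_{\mathcal F} + \Delta),
\]
where the coefficient of each $\mu$-exceptional divisor $E_i$ in $E$ is $-a(E_i, \mathcal F, \Delta)$ and $\bar\mu_*(\text{Diff}_{D'}(\mathcal F', \mu_*^{-1}\Delta') + E|_{D'}) = \Delta_S$. Restricting to $D'$ via Proposition-Definition \ref{prop_adjunction} and applying Remark \ref{r_diff_b} gives
\[
a(\tilde P, \mathcal F_S, \Delta_S) = -\text{coeff}_{\tilde P}\bigl(\text{Diff}_{D'}(\mathcal F')\bigr) - \text{coeff}_{\tilde P}\bigl((\mu_*^{-1}\Delta')|_{D'}\bigr) - \text{coeff}_{\tilde P}(E|_{D'}).
\]
The first summand vanishes by our choice of $\mu$. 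If $\tilde P$ is $\bar\mu$-exceptional, then by SNC there is a unique $\mu$-exceptional divisor $F$ with $\tilde P$ a component of $F \cap D'$, the middle summand vanishes, and the identity collapses to $a(\tilde P, \mathcal F_S, \Delta_S) = a(F, \mathcal F, \Delta)$. If instead $\tilde P$ is not $\bar\mu$-exceptional, the last summand vanishes by SNC, and Proposition \ref{prop_explicit_calc} combined with Lemma \ref{rmk_1} (which excludes $\mathcal F$-invariant components from $\Delta$) yields the desired bound on the remaining contribution directly from the construction of the different.

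The principal obstacle lies in the log canonical case, specifically in transferring the inequality $a(F, \mathcal F, \Delta) \ge -\epsilon(F, \mathcal F')$ into the required inequality $a(\tilde P, \mathcal F_S, \Delta_S) \ge -\epsilon(\tilde P, \mathcal F_S)$ when $\tilde P$ is $\bar\mu$-exceptional; this demands the compatibility $\epsilon(F, \mathcal F') \le \epsilon(\tilde P, \mathcal F_S)$. When $F$ is $\mathcal F'$-invariant the inequality is automatic; otherwise the smoothness of $\mathcal F'$ at the generic point of $\tilde P$ guaranteed by Lemma \ref{lem_weak_resolution}, combined with the local criterion of Proposition \ref{prop_explicit_calc}(2), provides the structural input needed to conclude that $\tilde P$ is non-invariant as well. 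The canonical, terminal, and log terminal cases follow by identical reasoning with the corresponding strengthenings of the bound on $a(F, \mathcal F, \Delta)$.
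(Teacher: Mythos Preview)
Your overall architecture matches the paper's proof: choose a divisor over $S$, pass to a log resolution extracting it, invoke Lemma~\ref{lem_weak_resolution} to kill the different along the chosen divisor, and reduce to the equality $a(\tilde P,\mathcal F_S,\Delta_S)=a(F,\mathcal F,\Delta)$. The canonical and terminal cases then go through immediately. The gap is in the log canonical (and log terminal) case, precisely at the step where you assert that $\epsilon(F,\mathcal F')\le \epsilon(\tilde P,\mathcal F_{D'})$ using ``the structural input'' of Proposition~\ref{prop_explicit_calc}(2).

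That implication is false in general. Take $X'=\mathbb A^3$ with coordinates $x,y,z$, $\mathcal F'$ the smooth rank-two foliation generated by $\partial_x,\partial_y$, $D'=\{x=0\}$, and $F=\{x+z=0\}$. Then $\tilde P=F\cap D'=\{x=z=0\}$, the foliation $\mathcal F'$ is smooth everywhere, ${\rm Diff}_{D'}(\mathcal F')$ vanishes along $\tilde P$ (since $\partial_x(x)=1\notin I_{\tilde P}$, so the converse in Proposition~\ref{prop_explicit_calc}(2) applies), and $F$ is not $\mathcal F'$-invariant (as $\partial_x(x+z)=1$). Yet the restricted foliation $\mathcal F_{D'}$ on $D'\cong\mathbb A^2_{y,z}$ is generated by $\partial_y$, and $\tilde P=\{z=0\}\subset D'$ is $\mathcal F_{D'}$-invariant. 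So Proposition~\ref{prop_explicit_calc}(2) alone cannot give you the $\epsilon$-comparison.

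What the paper does instead is use the log canonical hypothesis \emph{again} at this point: assuming $\epsilon(\tilde P,\mathcal F_{D'})=0$ and $a(F,\mathcal F,\Delta)<0$, one blows up $X'$ along the codimension-two centre $\tilde P$. Because $\tilde P$ is $\mathcal F_{D'}$-invariant and $\mathcal F'$ is smooth there, a direct local computation shows the new exceptional divisor is $\mathcal F'$-invariant, and its discrepancy with respect to $(\mathcal F,\Delta)$ equals $a(\cdot,\mathcal F',D'-a(F,\mathcal F,\Delta)F)<0$, contradicting log canonicity of $(\mathcal F,\Delta)$. In the toy example above, blowing up $\{x=z=0\}$ indeed produces an invariant exceptional divisor with discrepancy $-(1+c)$ for the pair $(\mathcal F',D'+cF)$, which is negative whenever $c>0$. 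Your argument is missing exactly this blow-up step; once you insert it, the proof is complete and coincides with the paper's.
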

\begin{proof}
Pick any divisor $E_S$ on some birational model $S' \rightarrow S$.
Let $\mu \colon Y \rightarrow X$ be any log resolution of $(X, \Delta)$ 
(we emphasise that this not necessarily a log
resolution of $\mathcal F$)  which extracts a divisor $E$ such that 
$E \cap S_Y = E_S$ where $S_Y \coloneqq \mu_*^{-1}D$. Let $\mathcal F_Y \coloneqq \mu^{-1}\mathcal F$ and 
$\Delta_Y \coloneqq \mu_*^{-1}\Delta$.

By Lemma \ref{lem_weak_resolution}, perhaps replacing $Y$ by a higher model we may assume
that if we write $(K_{\mathcal F_Y}+S_Y)\vert_{S_Y} \sim K_{\mathcal F_{S_Y}}+\Theta_{S_Y}$ 
then $\Theta_{S_Y}$
does not contain $E_S$ in its support.

By assumption $a(E, \mathcal F, \Delta) \geq 0$ (resp. $\geq -\epsilon(\mathcal F_Y, E)$, etc.)
and so we see that $a(E_S, \mathcal F_S, \Delta_S) \geq 0$ (resp. $\geq -\epsilon(\mathcal F_Y, E)$, etc.).
To conclude it suffices to show that if $a(E, \mathcal F, \Delta) < 0$ then 
$\epsilon(\mathcal F_{S_Y}, E_S) = 1$.  Suppose for sake of contradiction that $\epsilon(\mathcal F_{S_Y},E_S) = 0$.
By Lemma \ref{lem_weak_resolution} we see that $E_S \subset Y$ is not contained in 
$\sing\, {\mathcal F_Y}$
and so shrinking about a general point of $E_S$ we may assume that $\mathcal F_Y$ is smooth,
and that $\Delta_Y = S_Y$.  If $a(E, \mathcal F, \Delta)<0$ 
then a direct calculation shows that
the blow up of $Y$ along $E_S$ extracts an invariant divisor $F$ such that 
\[
a(F,\mathcal F,\Delta)=a(F,\mathcal F_Y, S_Y-a(E, \mathcal F, \Delta)E)<0,
\] 
contradicting the hypothesis that $(\mathcal F, \Delta)$ is log canonical. Thus, our claim follows.
\end{proof}

\begin{corollary}
\label{cor_diff_comparison}
Let $X$ be a potentially klt 
variety, let $\mathcal F$ 
be a foliation of rank $r$ on $X$ and let $D$ be a divisor which is not 
$\mathcal F$-invariant.
Let $0 \leq \Delta = D+\Delta'$ be a $\mathbb Q$-divisor on $X$ such that $D$ is not contained in the support of 
$\Delta'$. 
Suppose that $D$, $K_{\mathcal F}+\Delta$ and $K_X+\Delta$ 
are $\mathbb Q$-Cartier.
Let $n\colon S \rightarrow D$ be the normalisation and let ${\rm Diff}_S(X, \Delta')$ be the classical different associated to $(X, \Delta)$ on $S$.

Then we have an inequality of differents
\[
{\rm Diff}_S(\mathcal F, \Delta')_{{\rm n-inv}} \geq {\rm Diff}_S(X, \Delta')_{{\rm n-inv}}.
\]
\end{corollary}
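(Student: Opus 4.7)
The plan is to argue one non-invariant prime divisor at a time. Fix a prime divisor $P$ on $S$ with $\epsilon(\mathcal F_S, P) = 1$; the goal is to show $m_P \, {\rm Diff}_S(\mathcal F, \Delta') \geq m_P \, {\rm Diff}_S(X, \Delta')$.

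First I reduce to the case where $K_X$, $K_\mathcal F$ and $D$ are all Cartier in a neighbourhood of the generic point of $n(P)$. Working in a neighbourhood $U$ of the generic point of $n(P)$, I take a quasi-\'etale cyclic cover $q\colon V \to U$ (which exists since $X$ is potentially klt and all the relevant classes are $\mathbb Q$-Cartier) so that $q^*K_X$, $q^*K_\mathcal F$ and $q^*D$ all become Cartier on $V$. Letting $p\colon S' \to S$ be the induced morphism on the normalisations of $q^{-1}(D\cap U)$, with ramification index $e$ along $P' := p^{-1}(P)$, Equation \eqref{rw_comp} applies directly to the foliated different, and the identical argument combined with Riemann--Hurwitz together with $p^*(K_S + {\rm Diff}_S(X, \Delta')) = K_{S'} + {\rm Diff}_{S'}(V, \Delta'_V)$ gives the analogous formula for the classical different:
\[
m_{P'} {\rm Diff}_{S'}(\mathcal F_V, \Delta'_V) = e\, m_P{\rm Diff}_S(\mathcal F, \Delta') - (e-1), \quad m_{P'} {\rm Diff}_{S'}(V, \Delta'_V) = e\, m_P{\rm Diff}_S(X, \Delta') - (e-1),
\]
where I used $\epsilon(\mathcal F_S, P) = 1$. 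Subtracting shows the inequality on $S$ is equivalent to the analogous inequality on $V$, so the Cartier reduction is achieved.

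Next I take a log resolution $\pi\colon Y \to X$ of $(X, D + {\rm Supp}\, \Delta)$ and, by a further blow-up supplied by Lemma \ref{lem_weak_resolution}, arrange that the component $E_S \subset D' := \pi_*^{-1}D$ mapping birationally via $\mu\colon D' \to S$ to $P$ is not in the support of ${\rm Diff}_{D'}(\mathcal F', \Delta'_Y)$, where $\mathcal F' := \pi^{-1}\mathcal F$ and $\Delta'_Y := \pi_*^{-1}\Delta'$. Writing
\[
K_{\mathcal F'} + \Delta'_Y + D' + E_\mathcal F = \pi^*(K_\mathcal F + \Delta' + D), \quad K_Y + \Delta'_Y + D' + E_X = \pi^*(K_X + \Delta' + D)
\]
with $E_\mathcal F$ and $E_X$ both $\pi$-exceptional, Remark \ref{rmk_2} and its classical analogue yield
\[
m_P{\rm Diff}_S(\mathcal F, \Delta') = m_{E_S}\bigl(E_\mathcal F|_{D'}\bigr) \quad\text{and}\quad m_P{\rm Diff}_S(X, \Delta') = m_{E_S}\bigl(E_X|_{D'}\bigr),
\]
because $E_S$ is in neither the strict transform $\Delta'_Y$ nor the foliated tangency.

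The proof then reduces to the coefficient-wise inequality $a(F, X) \geq a(F, \mathcal F)$ for every $\pi$-exceptional $F$ meeting $E_S$ (the $\Delta$-contribution enters the two discrepancies identically). The key input is that on the open locus where $X$ is smooth, the differential $d\pi\colon T_Y \to \pi^*T_X$ restricts to a map $T_{\mathcal F'} \to \pi^*T_\mathcal F$ (by the definition of $\pi^{-1}\mathcal F$), hence induces a generically surjective map of normal sheaves $N_{\mathcal F'} \to \pi^*N_\mathcal F$ and a morphism of line sheaves $\det N_{\mathcal F'} \to \pi^w \det N_\mathcal F$ that is an isomorphism at the generic point of $Y$. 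Any non-zero morphism of line sheaves has vanishing divisor equal to an effective Weil divisor; since the morphism is non-vanishing away from the $\pi$-exceptional locus, that vanishing divisor is effective and exceptional. Translating this back gives $K_Y - K_{\mathcal F'} - \pi^*(K_X - K_\mathcal F) \geq 0$, which is precisely the desired discrepancy inequality. The main obstacle I anticipate is propagating the normal-sheaf comparison across the singular locus of $X$, which I would handle by saturating in $T_Y$ and working with the divisorial pullback $\pi^w$ in place of ordinary pullback of reflexive sheaves.
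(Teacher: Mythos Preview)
Your approach is essentially the same as the paper's: localise at a non-invariant prime $P$, reduce via a finite cover, pass to a log resolution, and compare the two differents via Remark~\ref{rmk_2} together with the discrepancy inequality $a(F,X)\ge a(F,\mathcal F)$ for exceptional divisors. Your normal-sheaf argument for this last inequality is exactly the content of \cite[Corollary 3.3]{Spicer20}, which the paper simply cites.

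The one genuine gap you yourself flag is the right one: your quasi-\'etale cyclic cover only makes $K_X$, $K_{\mathcal F}$, $D$ Cartier, not $X$ smooth, and the normal-sheaf comparison $\det N_{\mathcal F'}\to \pi^w\det N_{\mathcal F}$ is delicate over $\sing X$. The paper resolves this cleanly by observing that a potentially klt variety has quotient singularities in codimension two, so near the generic point of $n(P)$ one can pass to a finite (quasi-\'etale in codimension one) cover with $X$ \emph{smooth}; then \cite[Corollary 3.3]{Spicer20} (equivalently your normal-sheaf argument) applies directly. With this stronger reduction your proof goes through.

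One simplification: the detour through Lemma~\ref{lem_weak_resolution} is unnecessary. On the log resolution $\overline X$ the classical different on $\overline D$ is exactly $\overline\Delta\vert_{\overline D}$, while by Remark~\ref{r_diff_b} the foliated different satisfies ${\rm Diff}_{\overline D}(\overline{\mathcal F},\overline\Delta)\ge \overline\Delta\vert_{\overline D}$ automatically. Combined with $E\ge E'$ and Remark~\ref{rmk_2}, this already yields the desired inequality after pushforward, without ever needing to kill the tangency contribution along $E_S$.
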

\begin{proof}

To prove the  claim it suffices to work in the neighbourhood 
of the generic point of a 
 divisor $P \subset S$, which is not $\mathcal F_S$-invariant. 
Since $X$ is potentially klt, it has quotient singularities in a neighbourhood of the generic
point of $P$.  Arguing as in the proof of item  (1) of Proposition 
\ref{prop_explicit_calc} we may freely replace $X$ by a finite cover,
and so we may assume that $X$ is smooth.

Suppose that  $\pi\colon \overline{X} \rightarrow X$ is a log resolution of $(X, D+{\rm Supp}\, \Delta)$. 
 We may write 
\[
K_{\overline{\mathcal F}}+\overline{\Delta}+ \overline{D}+E = \pi^*(K_{\mathcal F}+\Delta+D)
\]
and
\[
K_{\overline{X}}+\overline{\Delta}+\overline{D}+E' = \pi^*(K_{\mathcal F}+\Delta+D)
\]
where $E, E'$ are $\pi$-exceptional, $\overline{\mathcal F} \coloneqq \pi^{-1}\mathcal F$, $\overline{D}\coloneqq \pi_*^{-1}D$
and $\overline{\Delta} \coloneqq \pi_*^{-1}\Delta$.
By \cite[Corollary 3.3]{Spicer20}  we have that $E'\le E$.   
Let $S$ be the normalisation of $D$ and let $\mu\colon \overline{ D} \rightarrow S$ be the induced morphism.

By Remark \ref{r_diff_b}  we have that  ${\rm Diff}_{\overline D}(\overline{\cal F},\overline{\Delta})\ge \overline \Delta|_{\overline D}$. Thus, if ${\rm Diff}_{\overline D}(\overline{X},\overline{\Delta})$ denotes the classical different on $\overline{X}$, then  
${\rm Diff}_{\overline D}(\overline{X},\overline{\Delta})= \overline \Delta|_{\overline D}$ and 
by Remark \ref{rmk_2} we have

\[
\begin{aligned}
 {\rm Diff}_S(\cal F,\Delta)&=
\mu_*({\rm Diff}_{\overline D}(\overline{\cal F},\overline{\Delta})+E|_{\overline{D}}) \\ 
&\ge \mu_*({\rm Diff}_{\overline D}(\overline{X},\overline{\Delta})+E'|_{\overline D})={\rm Diff}_S(X,\Delta).
\end{aligned}
\]
Thus, our claim follows. 
\end{proof}

\begin{lemma}
\label{lem_alg_int_disc}
Let $X$ be a potentially klt 
variety, let $\mathcal F$ 
be an algebraically integrable foliation of rank $r$ on $X$  which is induced by a morphism $f\colon X \rightarrow Z$
and let $D$ be a divisor which is not 
$\mathcal F$-invariant. 
Let $0 \leq \Delta = D+\Delta'$ be a $\mathbb Q$-divisor on $X$ such that $D$ is not contained in the support of 
$\Delta'$. 
Suppose that $D$, $K_{\mathcal F}+\Delta$ and $K_X+\Delta$ 
are $\mathbb Q$-Cartier.
Let $n\colon S \rightarrow D$ be the normalisation  and let ${\rm Diff}_S(X, \Delta')$ be the classical different associated to $(X, \Delta)$ on $S$.
Suppose that $(\mathcal F, \Delta)$ is log canonical.

Then 
\[
{\rm Diff}_S(\mathcal F, \Delta') = {\rm Diff}_S(X, \Delta')_{{\rm n-inv}}.
\]
\end{lemma}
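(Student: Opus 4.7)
The plan is to establish both inequalities and combine them. The inequality
${\rm Diff}_S(\cal F, \Delta') \ge {\rm Diff}_S(X, \Delta')_{\rm n-inv}$ is exactly Corollary \ref{cor_diff_comparison}. Since $(\cal F, \Delta)$ is log canonical and $D$ is not $\cal F$-invariant, Theorem \ref{thm_adj_sing} implies that $(\cal F_S, {\rm Diff}_S(\cal F, \Delta'))$ is log canonical, and Lemma \ref{rmk_1} then forces no component of ${\rm Diff}_S(\cal F, \Delta')$ to be $\cal F_S$-invariant; in particular ${\rm Diff}_S(\cal F, \Delta') = {\rm Diff}_S(\cal F, \Delta')_{\rm n-inv}$. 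It therefore suffices to prove $m_P {\rm Diff}_S(\cal F, \Delta') \le m_P {\rm Diff}_S(X, \Delta')$ at each prime divisor $P$ on $S$ not $\cal F_S$-invariant, which is a local question at the generic point of $n(P)$.

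Since $\cal F$ is induced by $f\colon X \dashrightarrow Z$, an integral subvariety $W$ of $X$ is $\cal F$-invariant precisely when $f|_W$ is not dominant onto $Z$; hence both $f|_D$ and $f|_{n(P)}$ are dominant onto $Z$. I would then choose a log resolution $\pi\colon X' \to X$ of $(X, D+{\rm Supp}\,\Delta')$ such that the composition $f\circ\pi$ extends to an equidimensional morphism $f'\colon X'\to Z$ (achievable by further blow-ups via equidimensional reduction) and, applying Lemma \ref{lem_weak_resolution} to $P$, such that the transformed foliation $\cal F' = \pi^{-1}\cal F$ is smooth and transverse to $D' = \pi_*^{-1}D$ at the generic point of the strict transform $P'$ of $P$ in $D'$.

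On this model, $K_{\cal F'}=K_{X'/Z}-R(f')$ and $K_\cal F = K_{X/Z}-R(f)$, and so
\[
(K_{\cal F'}-K_{X'}) - \pi^*(K_\cal F-K_X) = \pi^*R(f)-R(f')
\]
is supported on $f'$-vertical, equivalently $\cal F'$-invariant, divisors. Writing
\[
K_{\cal F'}+\pi_*^{-1}\Delta'+D'+E = \pi^*(K_\cal F+\Delta'+D),\qquad K_{X'}+\pi_*^{-1}\Delta'+D'+E' = \pi^*(K_X+\Delta'+D),
\]
this identity shows that $E$ and $E'$ have the same coefficient along any non-$\cal F'$-invariant $\pi$-exceptional prime divisor. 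Remark \ref{rmk_2}, together with its classical counterpart, Proposition \ref{prop_explicit_calc}(2), and Remark \ref{r_diff_b}, then yields
\[
m_P{\rm Diff}_S(\cal F,\Delta') = m_{P'}(\pi_*^{-1}\Delta'|_{D'}+E|_{D'}),\qquad m_P{\rm Diff}_S(X,\Delta') = m_{P'}(\pi_*^{-1}\Delta'|_{D'}+E'|_{D'}),
\]
where I used the smoothness and transversality of $\cal F'$ at $P'$ to identify ${\rm Diff}_{D'}(\cal F',\pi_*^{-1}\Delta')$ with $\pi_*^{-1}\Delta'|_{D'}$ at $P'$.

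Since $f\pi(P') = Z$, no $f'$-vertical $\pi$-exceptional divisor can contain $P'$; hence only non-$\cal F'$-invariant exceptional divisors contribute to $E|_{D'}$ and $E'|_{D'}$ at $P'$, and for those the coefficient equality above gives $m_{P'}(E|_{D'}) = m_{P'}(E'|_{D'})$. This yields the desired equality $m_P{\rm Diff}_S(\cal F,\Delta') = m_P{\rm Diff}_S(X,\Delta')$. The principal obstacle is arranging the equidimensionality of $f'$ simultaneously with the transversality of $\cal F'$ to $D'$ at $P'$ provided by Lemma \ref{lem_weak_resolution}; achieving the former may require additional blow-ups that must remain compatible with the property extracted from the latter.
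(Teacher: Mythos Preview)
Your overall architecture matches the paper exactly: invoke Corollary~\ref{cor_diff_comparison} for the inequality, use Theorem~\ref{thm_adj_sing} and Lemma~\ref{rmk_1} to reduce to non-$\cal F_S$-invariant $P$, observe that such $P$ dominates $Z$, and then argue that foliated and classical discrepancies agree for exceptional divisors centred on $n(P)$ so that Remark~\ref{rmk_2} finishes. The divergence is only in this last discrepancy comparison.

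The paper's argument is cleaner and avoids your self-identified obstacle entirely. Rather than writing $K_{\cal F'}=K_{X'/Z}-R(f')$ globally (which forces you to arrange equidimensionality of $f'$), the paper observes that for \emph{any} birational model $\mu\colon X'\to X$ the exceptional divisor
\[
G \coloneqq \bigl(K_{\mu^{-1}\cal F}+\mu_*^{-1}\Delta-\mu^*(K_{\cal F}+\Delta)\bigr) - \bigl(K_{X'}+\mu_*^{-1}\Delta-\mu^*(K_X+\Delta)\bigr)
\]
is $\mu$-numerically trivial in a neighbourhood of the generic fibre of $X'\to Z$, simply because $K_{\mu^{-1}\cal F}\equiv K_{X'}$ there. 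The negativity lemma then gives $G=0$ in that neighbourhood, hence $a(E,\cal F,\Delta)=a(E,X,\Delta)$ for every exceptional $E$ dominating $Z$. No equidimensionality of $f'$ (or of $f$) is needed, and no compatibility with Lemma~\ref{lem_weak_resolution} has to be arranged.

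Your ramification-divisor computation is really the same statement in disguise: the identity you want, namely that $E-E'$ is supported on $\cal F'$-invariant (equivalently $f'$-vertical) divisors, is exactly ``$G=0$ over the generic point of $Z$''. So the obstacle you flag is artificial: either localise your computation to a neighbourhood of the generic fibre (where the ramification formula holds automatically), or replace that step by the paper's negativity-lemma argument. Once you do this, your proof goes through and coincides with the paper's.
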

\begin{proof}

By Corollary \ref{cor_diff_comparison} we have an inequality 
\[
{\rm Diff}_S(\mathcal F, \Delta') \geq {\rm Diff}_S(X, \Delta')_{{\rm n-inv}}.
\]
By Theorem \ref{thm_adj_sing} we have that 
$(\mathcal F_S, {\rm Diff}_S(\mathcal F, \Delta'))$
is log canonical and by Lemma \ref{rmk_1} it follows that ${\rm Diff}_S(\mathcal F, \Delta')$ has no $\mathcal F_S$-invariant components. 
Thus,
it suffices to show that  if $P \subset S$ is a divisor which is not $\mathcal F_{S}$-invariant then 
the coefficient of $P$ in each of the differents is the same.

Since $\mathcal F$ is induced by the morphism
$f\colon X \rightarrow Z$, $\mathcal F_{S}$ is induced by the restricted
morphism $(f\circ n)\colon S \rightarrow Z$.  Since $P$ is not $\mathcal F_{S}$-invariant,
$P$ dominates $Z$.
Let $\mu\colon X' \rightarrow X$ be any birational model and $E$
be any $\mu$-exceptional divisor centred on $n(P)$. 
Let us define 
\[
G \coloneqq (K_{\mu^{-1}\mathcal F}+\mu_*^{-1}\Delta-\mu^*(K_{\mathcal F}+\Delta) ) - 
(K_{X'}+\mu_*^{-1}\Delta-\mu^*(K_X+\Delta)).
\]
Note that $G$ is $\mu$-exceptional. 
In a neighbourhood of the generic fibre of $X' \rightarrow Z$ we have that 
$K_{\mu^{-1}\mathcal F} \equiv K_{X'}$, and so 
$G$
is $\mu$-numerically
trivial in a neighbourhood of the generic fibre of $X' \rightarrow Z$.  By the 
negativity lemma (cf. \cite[Lemma 3.39]{KM98}) in this neighbourhood $G = 0$.  
In particular, since $E$ dominates $Z$, we have
$a(E, \mathcal F, \Delta) = a(E, X, \Delta)$.  
By Remark \ref{rmk_2} 
we may then conclude that 
\[
m_P{\rm Diff}_S(\mathcal F, \Delta') = m_P{\rm Diff}_S(X, \Delta')
\]
as required.
\end{proof}

\begin{corollary}
\label{cor_diff_subfol}
Let $X$ be a potentially klt 
variety, let $\mathcal F$ 
be a foliation on $X$ and let $\mathcal H \subset \mathcal F$ be an algebraically integrable
subfoliation which is induced by a morphism and let $D$ be a divisor on $X$ which is not 
$\mathcal H$-invariant.
Let $0 \leq \Delta = D+\Delta'$ be a $\mathbb Q$-divisor on $X$ such that $D$ is not contained in the support of 
$\Delta'$. 
Suppose that $D$, $K_{\mathcal F}+\Delta$ and $K_X+\Delta$ 
are $\mathbb Q$-Cartier  and that  $(\mathcal H, \Delta)$ is log canonical.
Let $n\colon S \rightarrow D$ be the normalisation.  

Then 
\[
{\rm Diff}_S(\mathcal F, \Delta') \geq {\rm Diff}_S(\mathcal H, \Delta').
\]
\end{corollary}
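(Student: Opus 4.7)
The plan is to sandwich ${\rm Diff}_S(\mathcal F,\Delta')$ between ${\rm Diff}_S(\mathcal H,\Delta')$ and the classical different on $S$ by combining Corollary \ref{cor_diff_comparison} applied to $\mathcal F$ with Lemma \ref{lem_alg_int_disc} applied to the algebraically integrable subfoliation $\mathcal H$. The bridge between these two inputs is a simple invariance comparison between the restricted foliations $\mathcal H_S\subset \mathcal F_S$ on $S$.

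First, I would record the invariance comparison. Since $T_{\mathcal H}\subset T_{\mathcal F}$, every $\mathcal F$-invariant subvariety is automatically $\mathcal H$-invariant; in particular the hypothesis that $D$ is not $\mathcal H$-invariant forces $D$ to be non-$\mathcal F$-invariant, so Proposition-Definition \ref{prop_adjunction} produces a well-defined ${\rm Diff}_S(\mathcal F,\Delta')\geq 0$. The same inclusion, passed to the restricted foliations, shows that $\mathcal H_S$ is a subfoliation of $\mathcal F_S$ on $S$; hence every $\mathcal F_S$-invariant prime divisor on $S$ is $\mathcal H_S$-invariant, and contrapositively every $\mathcal H_S$-non-invariant prime divisor is $\mathcal F_S$-non-invariant.

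Next, I would apply the two previous results. Corollary \ref{cor_diff_comparison} applied to $\mathcal F$ (whose hypotheses are immediate from those of the corollary) gives
\[
{\rm Diff}_S(\mathcal F,\Delta')_{{\rm n-inv}}\ \geq\ {\rm Diff}_S(X,\Delta')_{{\rm n-inv}},
\]
where both subscripts refer to non-invariance under $\mathcal F_S$. On the other hand, $\mathcal H$ is algebraically integrable and induced by a morphism, $D$ is not $\mathcal H$-invariant, and $(\mathcal H,\Delta)$ is log canonical, so Lemma \ref{lem_alg_int_disc} yields
\[
{\rm Diff}_S(\mathcal H,\Delta')\ =\ {\rm Diff}_S(X,\Delta')_{{\rm n-inv}},
\]
where this time the subscript refers to non-invariance under $\mathcal H_S$.

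Finally, by the invariance comparison of Step 1, the $\mathcal H_S$-non-invariant part of ${\rm Diff}_S(X,\Delta')$ is dominated by its $\mathcal F_S$-non-invariant part (the same coefficients are taken on a larger set of prime divisors), so the right-hand side of the first display is at least the right-hand side of the second. Using also that ${\rm Diff}_S(\mathcal F,\Delta')\geq {\rm Diff}_S(\mathcal F,\Delta')_{{\rm n-inv}}$, since the invariant part is effective, chaining these inequalities produces the desired ${\rm Diff}_S(\mathcal F,\Delta')\geq {\rm Diff}_S(\mathcal H,\Delta')$. The only point that requires any real care is the bookkeeping between the two distinct notions of non-invariance, handled in the first paragraph; I do not anticipate any other genuine obstacle.
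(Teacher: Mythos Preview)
Your proposal is correct and follows essentially the same route as the paper: the paper's proof also first records that $D$ is not $\mathcal F$-invariant and that any prime divisor $P\subset S$ which is not $\mathcal H_S$-invariant is not $\mathcal F_S$-invariant, and then deduces the result directly from Corollary~\ref{cor_diff_comparison} together with Lemma~\ref{lem_alg_int_disc}. Your write-up simply spells out in more detail the chain of inequalities and the bookkeeping between the two notions of non-invariance.
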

\begin{proof}
We first remark that $D$ is not $\mathcal F$-invariant and if $P\subset S$ is a prime divisor which is not $\cal H_S$-invariant, then it is also not $\cal F_S$-invariant. Thus, the claim
is a direct consequence of Corollary \ref{cor_diff_comparison}
combined with
Lemma \ref{lem_alg_int_disc}.
\end{proof}

%
%
%
%
%
%
%
%

\subsection{Additional remarks}

\subsubsection{Failure of adjunction on singularities for invariant divisors}
\label{s_failure} Let $\cal F$ be a log canonical foliation on a smooth variety $X$ and let $D$ be a smooth $\cal F$-invariant divisor on $X$.
Then in general $(\mathcal F_D, {\rm Diff}_D(\cal F))$ 
is not log canonical, as the following
example shows:

\begin{example}
Let $X = \mathbb A^3$ with coordinates $x, y, z$ and let $\mathcal F$ be the rank one foliation on $X$ defined by the vector field
\[
\partial \coloneqq x^2\partial_x+y^2\partial_y+z\partial_z.
\]
By \cite[Fact I.ii.4]{McQP09} $\mathcal F$ has log canonical singularities since 
its semi-simple part $z\partial_z$
is non-zero.
  Set $D = \{z = 0\}$.  Then $D$ is $\cal F$-invariant and $\mathcal F_D$ is generated by $x^2\partial_x+y^2\partial_y$
whose semi-simple part is zero and by \cite[Fact I.ii.4]{McQP09} again, it is  not log canonical.
\end{example}

\subsubsection{Failure of Bertini's theorem}

Let $X$ be a smooth variety and let $\mathcal F$ be
a foliation with canonical singularities.  Let $A$ be an ample Cartier divisor.
In general it is not the case that we may find $0 \leq D \sim_{\mathbb Q} A$ 
such that $(\mathcal F, D)$ is log canonical, as the following example shows (see also \cite[pag. 12]{ACSS}). Let $\pi\colon X_0:=\mathbb P^2\times\mathbb P^1\to \mathbb P^2$ be the projection, let $\ell \subset X_0$ be a smooth curve such that $\pi(\ell)$ is a line and let $X$ be the blow up of $X_0$ along $\ell$. Then the foliation $\cal F$ induced by the morphism $X\to \mathbb P^2$ admits canonical singularities, but $(\cal F,D)$ is not log canonical for any ample $\mathbb Q$-divisor $D\ge 0$. 

 Moreover, the same example shows that it is in general not 
possible to choose $D$ to be reduced and irreducible and so that ${\rm Diff}_S(\cal F) = 0$, where $S\to D$ is the normalisation. 

\subsubsection{Other definitions of foliation}
\label{s_otherdef}
Occasionally 
in the literature a foliation is defined to be a quotient of the cotangent sheaf.
From the perspective of defining an adjunction formula (especially on singular varieties) 
our definition seems to be more appropriate.

The $S_2$ condition likewise seems to be important. 
On non-normal varieties there exist quotients of the cotangent sheaf which do not 
seem to correspond to reasonable foliations.  Consider the following example.
Let $X\coloneqq \{x = y = 0\} \cup \{z = w = 0\} \subset \mathbb A^4$, let $\omega$ be 
the restriction to $X$ of the $1$-form $dz+xdy+ydx$ and consider
the quotient
$\Omega^1_X \rightarrow \Omega^1_X/(\omega)$.  There is no way to lift this quotient to a rank one
foliation on a neighbourhood of $(0, 0, 0, 0)$,
 however, for a foliation one would expect that this 
should always be possible.


\section{Cone theorem for rank one foliated pairs}
\label{section_cone}
Throughout this section we assume that all our varieties are defined over $\mathbb C$.

The cone theorem for rank one foliations was intially considered in \cite{bm16}, and a refined version 
has appeared in \cite{McQ05}.  The goal of this section is to present a more general version which is more suitable to run the MMP. 

In order to study curves which are contained in the singular locus of a foliation we first need some  preliminary results. 
Indeed, the definition of the singular locus of a rank one foliation presented in \cite{McQ05} is slightly different 
than the one given above.  We recall McQuillan's definition now.

Let $X$ be a normal variety, let $\mathcal F$ be a rank one foliation on $X$ such that 
$K_{\cal F}$ is $\mathbb Q$-Cartier.  Let $x \in X$ be a point and let $U$ be an open neighbourhood of $x$.  
Up to replacing $U$ by a smaller neighbourhood we may find an index one cover $\sigma\colon U' \rightarrow U$ associated to $K_{\mathcal F}$
and such that $\sigma^{-1}\mathcal F$ is generated by a vector field $\partial$.

We say that $\mathcal F$ is {\bf singular in the sense of McQuillan} at $x \in X$ provided
there exists an embedding $U' \rightarrow M$ where $M$ is a smooth variety and a lift $\tilde{\partial}$
of $\partial$ to a vector field on $M$  such that $\tilde{\partial}$ vanishes at $\sigma^{-1}(x)$.
We denote by $\sing^+{\mathcal F}$ the locus of points $x \in X$ where $\mathcal F$ is singular in the sense 
of McQuillan. Note that $\sing^+{\mathcal F}$ does not depend on the choice of $U'$ and it is a closed subset of $X$. 

On a smooth variety it is easy to see that $\sing^+{\mathcal F} = \sing{\mathcal F}$, but in general
it is unclear if these two notions of singularity agree.  We do, however, have the following inclusion of
singular loci:

\begin{lemma}\label{l_sing}
Let $X$ be a normal variety, let $\mathcal F$ be a rank one foliation on $X$ such that 
$K_{\cal F}$ is $\mathbb Q$-Cartier.

Then  $\sing \mathcal F \subset \sing^+{\mathcal F}$. 
\end{lemma}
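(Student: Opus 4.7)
The plan is to argue by contrapositive: assuming $x \notin \sing^+\mathcal F$, I will show $x \notin \sing \mathcal F$. Locally around $x$, let $\sigma\colon U' \to U$ be an index-one cover associated to $K_\mathcal F$ and let $\partial$ be a generator of $T_{\sigma^{-1}\mathcal F}$. By hypothesis there exist an embedding $U' \hookrightarrow M$ into a smooth variety and a lift $\tilde\partial$ of $\partial$ to $M$ such that $\tilde\partial$ does not vanish at $\sigma^{-1}(x)$.

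Since $\tilde\partial$ is non-vanishing at $\sigma^{-1}(x)$ on the smooth variety $M$, I would straighten it by choosing local coordinates $z_1,\dots,z_N$ on $M$ so that $\tilde\partial = \partial/\partial z_1$. The Pfaff field $\Omega^1_M \to \mathcal O_M$ dual to $\tilde\partial$ then sends $dz_1 \mapsto 1$ and is surjective at $\sigma^{-1}(x)$. Because $\tilde\partial$ preserves the ideal sheaf of $U'$ in $M$, the restriction to $U'$ of this Pfaff field factors as $\Omega^1_M|_{U'} \twoheadrightarrow \Omega^1_{U'} \xrightarrow{\phi'} \mathcal O_{U'}$, where $\phi'$ is the Pfaff field of $\sigma^{-1}\mathcal F$. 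Hence $\phi'$, and a fortiori its reflexive hull $(\Omega^1_{U'})^{**} \to \mathcal O_{U'}$, is surjective at $\sigma^{-1}(x)$, so $\sigma^{-1}(x) \notin \sing \sigma^{-1}\mathcal F$.

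The final step is to descend from $U'$ back to $U$. Since $\sigma$ is quasi-étale and $U, U'$ are normal, the singular loci correspond: $x \in \sing \mathcal F$ if and only if every preimage of $x$ lies in $\sing \sigma^{-1}\mathcal F$, which is an instance of \cite[Corollary 5.14]{Druel21} (already used in the proof of Proposition \ref{prop_explicit_calc}). Having produced a preimage of $x$ that does not lie in $\sing \sigma^{-1}\mathcal F$, we conclude $x \notin \sing \mathcal F$, completing the contrapositive.

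I expect the main obstacle to be this final descent, since the cover $\sigma$ ramifies in codimension $\ge 2$---exactly where $\sing \mathcal F$ and $\sing \sigma^{-1}\mathcal F$ could a priori disagree. Away from the ramification locus, $\sigma$ is étale and both singular loci are literally identified; the identification over the ramification locus hinges on the reflexive/divisorial nature of the image of the Pfaff field on a normal scheme, which is exactly what Druel's comparison encodes.
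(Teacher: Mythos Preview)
Your proposal is correct and follows essentially the same approach as the paper: both reduce to the index-one cover and invoke \cite[Corollary 5.14]{Druel21} to compare $\sing\mathcal F$ with $\sing\sigma^{-1}\mathcal F$, with the Cartier case being handled directly. The paper simply asserts the Cartier case is ``easy'' while you spell it out via the contrapositive; your straightening step is in fact unnecessary, since nonvanishing of $\tilde\partial$ at $\sigma^{-1}(x)$ already gives surjectivity of the dual Pfaff field there, but this does no harm.
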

\begin{proof}
By \cite[Corollary 5.14]{Druel21} we may freely replace $X$ by an index one cover of $K_{\mathcal F}$ 
and so we may assume that $K_{\mathcal F}$ is Cartier, in which case the Lemma is easy.
\end{proof}

\begin{lemma}
\label{lem_invariant_cover}
Let $X$ be a normal variety, let $\mathcal F$ be a rank one foliation on $X$ such that 
$K_{\cal F}$ is $\mathbb Q$-Cartier. 
Let $V \subset X$ be an irreducible subvariety such that 
$V$ is not contained in $\sing^+\, \mathcal F$.  Assume that there exists an index one cover 
$\sigma\colon X' \rightarrow X$ associated to $K_{\mathcal F}$ and let $\cal F'\coloneqq \sigma^{-1}\cal F$.

Then $V$ is $\mathcal F$-invariant
if and only if $V'\coloneqq \sigma^{-1}(V)$ is $\mathcal F'$-invariant.
Moreover, if $V$ is invariant and  
if we denote by $\mathcal F_V$ (resp. $\mathcal F'_{V'}$) the restricted
foliation on $V$ (resp. $V'$), then $(\sigma|_{V'})^{-1} \mathcal F_V = \mathcal F'_{V'}$.
\end{lemma}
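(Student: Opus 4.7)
The plan is to verify both assertions by working locally around the generic point of $V$, exploiting the fact that invariance is defined at the generic point (see the definition in the subsection on invariant subschemes) and that foliations are determined by any dense open restriction (Lemma \ref{lem_uniqueness}). I would freely shrink $X$ to a suitable neighbourhood of the generic point of $V$. Since $K_{\mathcal F'} = \sigma^*K_{\mathcal F}$ is Cartier by construction of the index one cover, $T_{\mathcal F'}$ is a line bundle on $X'$, and locally $T_{\mathcal F'}$ is generated by a single vector field $\partial$ which we may choose to be $G$-semi-invariant under the cyclic Galois group $G$ of order $m$: $g^*\partial = \chi(g)\partial$ for some character $\chi$ of $G$. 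The assumption $V \not\subset \sing^+\mathcal F$ supplies, by definition, an embedding of a neighbourhood of a general point of $V'$ into a smooth ambient $M$ together with a lift $\tilde\partial$ of $\partial$ that is non-vanishing along $V'$ at that point. This is the key geometric input, ensuring that $\partial$ behaves like a genuine non-vanishing vector field at generic points of $V'$ even when $X'$ is singular there.

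For the equivalence of invariance, I would argue using Lemma \ref{lem_invariant_vector_fields}. On $X'$, the lemma applies since $T_{\mathcal F'}$ is locally free near the generic point of $V'$, yielding that $V'$ is $\mathcal F'$-invariant if and only if $\partial(I_{V'}) \subset I_{V'}$ in that neighbourhood. Since $V' = \sigma^{-1}(V)$ is $G$-invariant and $\partial$ is $G$-semi-invariant, the ideal condition on $I_{V'}$ is $G$-equivariant: it translates, via $I_V = (I_{V'})^G$ and the identification $X = X'/G$, into the corresponding factorisation of the Pfaff field $\phi\colon \Omega^1_X \to \mathcal O_X(K_{\mathcal F})$ through $\Omega^1_V$ in a neighbourhood of the generic point of $V$. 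This is precisely the $\mathcal F$-invariance of $V$, and the $G$-equivariance forces $K_{\mathcal F}$ to be Cartier at the generic point of $V$ as part of the bookkeeping. Conversely, if $V$ is $\mathcal F$-invariant, then pulling back the factorisation of $\phi$ via $\sigma$ and composing with $\sigma^*\Omega^1_X \to \Omega^1_{X'}$ yields the factorisation of the Pfaff field of $\mathcal F'$ through $\Omega^1_{V'}$, so $V'$ is $\mathcal F'$-invariant.

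For the moreover part, once $V$ is invariant the restricted foliation $\mathcal F_V$ is defined by the induced Pfaff field $\Omega^1_V \to \mathcal O_V(K_{\mathcal F})$, and likewise $\mathcal F'_{V'}$ by $\Omega^1_{V'} \to \mathcal O_{V'}(K_{\mathcal F'})$. Pulling back the Pfaff field for $\mathcal F_V$ via $\sigma|_{V'}$ and composing with the natural map $(\sigma|_{V'})^*\Omega^1_V \to \Omega^1_{V'}$ produces the Pfaff field for $\mathcal F'_{V'}$, so the two foliations agree at generic points; Lemma \ref{lem_uniqueness} then upgrades this to the desired global equality $(\sigma|_{V'})^{-1}\mathcal F_V = \mathcal F'_{V'}$. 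The main technical obstacle throughout is handling the ramification of $\sigma$ along $V$, which is non-trivial precisely when $V$ lies in the non-Cartier locus of $K_{\mathcal F}$. The hypothesis $V \not\subset \sing^+\mathcal F$ together with the embedding into the smooth ambient $M$ is exactly what allows us to sidestep this issue, by treating $\partial$ as an honest non-vanishing vector field near general points of $V'$ and passing cleanly between the $G$-equivariant picture on $X'$ and the quotient description of $X$.
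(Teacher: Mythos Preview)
Your forward direction is fine, and in fact the paper does it even more simply: since $V$ being $\mathcal F$-invariant \emph{by definition} requires $K_{\mathcal F}$ to be Cartier at the generic point of $V$, the index one cover $\sigma$ is automatically \'etale there, and both assertions follow immediately in an \'etale neighbourhood.

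The genuine gap is in your reverse direction. To conclude that $V$ is $\mathcal F$-invariant you must show that $K_{\mathcal F}$ is Cartier at the generic point of $V$; this is part of the definition of invariance in the paper, and it is exactly the nontrivial content of this implication. Your sentence ``the $G$-equivariance forces $K_{\mathcal F}$ to be Cartier at the generic point of $V$ as part of the bookkeeping'' is not an argument. Knowing that $\partial(I_{V'})\subset I_{V'}$ and that $\partial$ is $G$-semi-invariant does not by itself rule out that $V'$ lies inside the ramification locus of $\sigma$. (There is also a smaller issue: Lemma~\ref{lem_invariant_vector_fields} only gives the implication $\partial(I_{V'})\subset I_{V'}\Rightarrow V'$ invariant; the converse you invoke is stated in the paper only for smooth $X$, which $X'$ need not be.)

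The paper supplies the missing idea by using the hypothesis $V\not\subset\sing^+\mathcal F$ in a sharper way than you do. Rather than merely embedding $X'$ into a smooth ambient with a non-vanishing lift of $\partial$, it invokes the local structure theorem \cite[Lemma I.2.1]{bm16}: near a general point of $V'$ there is a holomorphic submersion $p\colon U\to Z$ with $T_{\mathcal F'}|_U=T_{U/Z}$, so locally $U\cong\mathbb D\times Z$ with $\mathcal F'$ generated by $\partial_t$. One then checks that the cyclic $G$-action is diagonal and acts on $t$ by a primitive root of unity, so the ramification locus is contained in $\{t=0\}\cap(\text{further equations})$. Since every $\mathcal F'$-invariant subvariety is locally of the form $p^{-1}(W)$, none can lie in the ramification locus; hence $\sigma$ is \'etale at the generic point of $V$ and you are back in the easy case. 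This geometric step is what your ``bookkeeping'' remark is standing in for, and it does require the submersion structure, not just a non-vanishing lift to an ambient smooth variety.
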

\begin{proof}
We first remark that if $\sigma$ is \'etale in a neighbourhood
of the generic point of $V$, then both points of the lemma are clear.

Suppose that $V$ is $\mathcal F$-invariant.  In this case, by definition it follows that 
$K_{\mathcal F}$ is Cartier in a neighbourhood of the 
generic point of $V$ and, therefore,
$\sigma$ is \'etale in a neighbourhood of the generic point 
of $V$, in which case we may conclude. 

Suppose that $\sigma^{-1}(V)$ is $\mathcal F'$-invariant.  
Let $x \in \sigma^{-1}(V)$ be a general point.  By \cite[Lemma I.2.1]{bm16} 
there exists an analytic neighbourhood $U$ of $x$
and a holomorphic submersion $p\colon U \rightarrow Z$ such that $T_{\mathcal F'}\vert_U = T_{U/Z}$.
After possibly shrinking $U$, we may assume that $\sigma$ is a Galois cover with Galois cover $G=\mathbb Z/m\mathbb Z$. Since the fibre of $p$ are invariant with respect to $G$, it follows that the action of $G$ descends to $Z$. 

After possibly shrinking $U$ further, we may write $U \cong \mathbb D \times Z$ where 
$\mathbb D$ is the unit disc and $p$
is given by projection onto the second coordinate.  Moreover, we may assume that $G$ acts on $\mathbb D$ so that the diagonal action of $G$ on $\mathbb D \times Z$ coincides with the action on $U$. 
In particular, if $t$ is a coordinate on $\mathbb D$, then 
$T_{\mathcal F'}$ is defined by $\partial_t$ and 
the Galois group acts on $t$ by $t \mapsto \xi^{-1} t$ where $\xi$ is a primitive $m$-th root of unity.
It follows
that the ramification locus of $\sigma$ is of the form $\{t = f_1 = \dots= f_r = 0\}$, 
for some functions $f_1,\dots,f_r\in \mathcal O_X$.

Since $\cal F'$ is defined by $\partial_{t}$, any $\mathcal F'$-invariant variety is locally of the form $p^{-1}(W)$ 
where $W \subset Z$ is a subvariety. Thus, no invariant subvariety is contained
in the ramification locus of $\sigma$, and so $\sigma$ is \'etale at the generic point of $V$
and we may conclude.
\end{proof}

\begin{lemma}\label{l_terminal}
Let $X$ be a normal variety, let $\mathcal F$ be a rank one foliation on $X$ such that 
$K_{\cal F}$ is $\mathbb Q$-Cartier.
Let $Z \subset X$ be a subvariety which is not $\cal F$-invariant and  is not contained in $\sing^+\, \mathcal F$.

Then $\cal F$ is terminal at the generic point of $Z$. 
\end{lemma}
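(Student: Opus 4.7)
The plan is to reduce, via the index-one cover of $K_\cal F$, to the case where $K_\cal F$ is Cartier, then to use the hypothesis $Z\not\subset \sing^+\cal F$ to put $\cal F$ into an analytic product normal form near the generic point of $Z$, and finally to bound discrepancies in that model.

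First, I would set $\sigma\colon X'\to X$ to be the index-one cover associated to $K_\cal F$, $\cal F'\coloneqq \sigma^{-1}\cal F$ and $Z'\coloneqq \sigma^{-1}(Z)$. By Lemma \ref{lem_invariant_cover}, $Z'$ is not $\cal F'$-invariant, and the definition of $\sing^+\cal F$ combined with the quasi-\'etaleness of $\sigma$ gives $Z'\not\subset \sing^+\cal F'$. The same ramification calculation that produces equation \eqref{rw_comp} in the proof of Proposition \ref{prop_explicit_calc}(1), now applied to discrepancies rather than to coefficients of the different, shows that $\cal F$ is terminal at the generic point of $Z$ if and only if $\cal F'$ is terminal at the generic point of $Z'$. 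Hence I may assume that $K_\cal F$ is Cartier.

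Shrink $X$ to a neighbourhood $U$ of the generic point of $Z$ so that $T_\cal F|_U$ is a line bundle with a generator $\partial$. The singular locus of $X$ is $\cal F$-invariant by Lemma \ref{lem_invariant_vector_fields} (its defining ideal is preserved by every local section of $T_{\cal F}$ by Seidenberg's theorem \cite[Theorem 5]{MR0212027}), so the non-invariance of $Z$ forces $U$ to be smooth at the generic point of $Z$. Since $Z\not\subset \sing^+\cal F$, there is an analytic embedding $U\hookrightarrow M$ into a smooth analytic variety $M$ together with a lift $\tilde\partial$ of $\partial$ to $M$ which does not vanish at the generic point of $Z$. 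The flow-box theorem furnishes analytic coordinates $(t,w_1,\dots,w_n)$ on $M$ in which $\tilde\partial=\partial_t$. Since $\tilde\partial$ preserves the ideal $I_U\subset \cal O_M$, this ideal is invariant under translation in $t$, hence generated by functions independent of $t$. Shrinking further, $U\cong \Sigma\times \mathbb A^1_t$ with $\Sigma$ smooth, $\cal F|_U$ is the foliation induced by the projection $U\to\Sigma$, and $K_\cal F=0$ in this neighbourhood.

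It remains to verify that every birational $\pi\colon Y\to U$ extracting an exceptional divisor $E$ whose centre is the generic point of $Z$ satisfies $a(E,\cal F)>0$. After a further resolution I may assume that the rational map $Y\dashrightarrow \Sigma$ is an equidimensional morphism $g$; then $\cal F_Y=\pi^{-1}\cal F$ is induced by $g$ and $K_{\cal F_Y}\sim K_{Y/\Sigma}-R(g)$. Using $K_\cal F=0$ on $U$ and $U$ smooth, this gives $a(E,\cal F) = a(E,U) - \mathrm{coef}_E(R(g))$. If $E$ is not $\cal F_Y$-invariant then $g(E)=\Sigma$ and $\mathrm{coef}_E(R(g))=0$, so $a(E,\cal F)=a(E,U)\geq 1$. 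If $E$ is $\cal F_Y$-invariant, I would verify, by a direct analysis patterned on the proof of Proposition \ref{prop_explicit_calc}(2) and using the non-invariance of $Z$ as an infinitesimal comparison between $\partial_t$ and $I_Z$, that $a(E,U)$ strictly exceeds $\mathrm{coef}_E(R(g))$.

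The main obstacle I anticipate is this last step for $\cal F_Y$-invariant $E$: precisely comparing $\mathrm{coef}_E(R(g))$ against $a(E,U)$. I expect to handle it by first reducing to the case where $Z$ is smooth via a preliminary sequence of blow-ups, so that the desired strict inequality follows from an explicit toric-style calculation at a smooth point of $Z$.
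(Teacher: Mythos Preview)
Your reduction to the Cartier case via the index-one cover is correct and matches the paper's proof exactly; the paper uses \cite[Corollary III.i.5]{McQP09} together with Lemma~\ref{lem_invariant_cover} for precisely this step. After that reduction the paper simply invokes \cite[Fact III.i.1]{McQP09}, whereas you attempt to reprove that fact directly via the flow-box normal form $U\cong\Sigma\times\mathbb A^1_t$. The strategy is reasonable, but the execution has two genuine problems.

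First, the deduction that $U$ is smooth at the generic point of $Z$ is wrong. Seidenberg's theorem does give that $\sing X$ is $\cal F$-invariant, but an $\cal F$-invariant subscheme can certainly contain non-invariant subvarieties. Concretely, take $\Sigma$ any normal singular variety with singular point $p$, set $U=\Sigma\times\mathbb A^1_t$ with $\cal F=\langle\partial_t\rangle$, and $Z=\{(p,0)\}$: then $Z\subset\sing U$, $Z$ is not $\cal F$-invariant, and $Z\not\subset\sing^+\cal F$ since $\partial_t$ lifts without zeros to the smooth ambient space. So $U$ need not be smooth along $Z$, and your bound $a(E,U)\ge 1$ is unjustified.

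Second, and more seriously, your case split is illusory. When $\operatorname{codim}_UZ\ge 2$ --- the only nontrivial case, since for $Z$ a divisor the localisation at the generic point of $Z$ is one-dimensional and carries no exceptional divisors --- the projection of $Z$ to $\Sigma$ is a proper subvariety. Hence every exceptional $E$ with centre $Z$ has $g(E)\subsetneq\Sigma$, i.e.\ $E$ is $\cal F_Y$-invariant; this is also the content of \cite[Corollary III.i.4]{McQP09}, used elsewhere in the paper. So your ``easy'' case is vacuous and the entire argument rests on the invariant-$E$ case you explicitly leave open. Your proposed fix (smooth $Z$ by preliminary blow-ups, then a toric computation) is plausible, but carrying it out amounts to reproving \cite[Fact III.i.1]{McQP09}, which is exactly what the paper cites.
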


\begin{proof}
The problem is local about a general point $z \in Z$,
so we are free to shrink about a general point of $Z$.  
By \cite[Corollary III.i.5]{McQP09}
and Lemma \ref{lem_invariant_cover}, we may therefore 
replace $X$ by the index one cover associated to $K_{\mathcal F}$, and so
we may assume that $K_{\mathcal F}$ is Cartier.
The result then follows from \cite[Fact III.i.1]{McQP09}.
\end{proof}

\begin{lemma}
\label{l_adjunction}
Let $p\colon X\to Y$ be a smooth morphism between normal analytic varieties 
and let $\cal F$ be the foliation on $X$ induced by $p$. 
Let $\Delta\ge 0$ be a $\mathbb Q$-Cartier $\mathbb Q$-divisor on $X$. Let $0\in Y$ and let $D_0\coloneqq p^{-1}(0)$. 

Then $(\cal F,\Delta)$ is log canonical in a neighbourhood of $D_0$ if and only if $D_0$ 
is not contained in the support of $\Delta$ and $(D_0,\Delta_0)$ is log canonical, where $\Delta_0\coloneqq \Delta|_{D_0}$. 
\end{lemma}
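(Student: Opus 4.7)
The plan is to use the local product structure afforded by the smoothness of $p$, combined with the adjunction formula for invariant closed analytic subsets, to reduce foliated log canonicity of $(\cal F, \Delta)$ near $D_0$ to classical log canonicity of $(D_0, \Delta_0)$. Since $p$ is smooth, $\cal F = T_{X/Y}$ has empty singular locus, $T_{\cal F}$ is locally free, $K_\cal F = K_{X/Y}$, and every fiber of $p$ is a smooth $\cal F$-invariant subvariety whose dimension equals the rank of $\cal F$. Moreover, by the local triviality of smooth morphisms, near any point of $D_0$ there is an analytic isomorphism $X \simeq D_0 \times V$ under which $\cal F$ corresponds to the vertical foliation $T_{D_0 \times V / V}$ and $D_0$ to $D_0 \times \{0\}$.

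For the necessity of $D_0 \not\subset \mathrm{Supp}\,\Delta$, I would argue that any irreducible component $\Delta_i$ of $\Delta$ containing $D_0$ must be $\cal F$-invariant: a dimension count shows that such a $\Delta_i$ cannot dominate $Y$, so $\Delta_i = p^{-1}(p(\Delta_i))$ is the pullback of a divisor on $Y$. Lemma \ref{rmk_1} then precludes such components whenever $(\cal F, \Delta)$ is log canonical.

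With $D_0$ disjoint from the support of $\Delta$, Proposition-Definition \ref{prop_inv_subadj_analytic} applied to the invariant subscheme $D_0$ produces $(K_\cal F + \Delta)|_{D_0} \sim_\mathbb Q K_{\cal F_{D_0}} + \mathrm{Diff}_{D_0}(\cal F, \Delta)$. Since $\dim D_0$ coincides with the rank of $\cal F$, the restricted foliation $\cal F_{D_0}$ is the full tangent foliation of $D_0$, so $K_{\cal F_{D_0}} = K_{D_0}$. Using the smoothness of $\cal F$ together with the explicit description of the Pfaff fields entering the construction of the different, one checks $\mathrm{Diff}_{D_0}(\cal F, \Delta) = \Delta_0$, yielding the identity $(K_\cal F + \Delta)|_{D_0} \sim_\mathbb Q K_{D_0} + \Delta_0$.

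For the equivalence of singularities, I would work in the local product chart and compare foliated discrepancies on $X$ with classical discrepancies on $D_0$. Given a log resolution $\sigma\colon \tilde D_0 \to D_0$ of $(D_0, \Delta_0)$, the product $\pi = \sigma \times \mathrm{id}_V$ extracts exceptional divisors of the form $E \times V$; using $K_{\tilde \cal F} - \pi^* K_\cal F = p_1^*(K_{\tilde D_0} - \sigma^* K_{D_0})$ and the bound $\mathrm{ord}_{E \times V}(\pi^* \Delta_i) \leq \mathrm{ord}_E(\sigma^*(\Delta_i|_{D_0}))$ for each component $\Delta_i$ of $\Delta$, one obtains $a(E \times V, \cal F, \Delta) \geq a(E, D_0, \Delta_0)$. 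This delivers the implication from $(D_0, \Delta_0)$ log canonical to $(\cal F, \Delta)$ log canonical near $D_0$. The converse direction proceeds by showing that every foliated divisor over $D_0$ whose discrepancy violates log canonicity restricts to a classical divisor on $D_0$ likewise violating log canonicity of $(D_0, \Delta_0)$. The hardest step is to handle foliated divisors not of product type; here the smoothness of $p$ is essential, since it guarantees that the transverse direction contributes no additional negative foliated discrepancies and so that foliated log canonicity is entirely detected on the central fiber.
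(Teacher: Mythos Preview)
Your proposal has two genuine gaps.

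\textbf{The necessity argument is incorrect.} Your claim that a prime component $\Delta_i$ of $\Delta$ containing $D_0$ cannot dominate $Y$ is false when $\dim Y\ge 2$. For instance, take $X=\mathbb A^1_x\times\mathbb A^2_{y,z}$, $Y=\mathbb A^2_{y,z}$ with $p$ the projection, so $D_0=\{y=z=0\}$. The irreducible hypersurface $\Delta_i=\{y-xz=0\}$ contains $D_0$ yet dominates $Y$, hence is \emph{not} $\cal F$-invariant. So Lemma \ref{rmk_1} does not apply. The paper instead blows up $X$ along $D_0$: the exceptional divisor $E$ is vertical over $Y$, hence $\cal F$-invariant, and one checks directly that $a(E,\cal F,\Delta)<0=-\epsilon(E)$ whenever $D_0\subset\operatorname{Supp}\Delta$.

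\textbf{The equivalence argument is incomplete.} Your product resolutions $\pi=\sigma\times\operatorname{id}_V$ only extract divisors of the form $E\times V$. To verify foliated log canonicity one must control \emph{all} exceptional divisors over $X$, and your final paragraph acknowledges but does not resolve this: the sentence ``the transverse direction contributes no additional negative foliated discrepancies'' is exactly the statement to be proved, not an argument for it. Non-product divisors (e.g.\ those arising from blowing up a subvariety of $X$ that is neither horizontal nor vertical) are not accessible by your construction, and the semicontinuity inequality you use does not help here.

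The paper takes a different route that sidesteps both issues. After base-changing along a resolution $\beta\colon Y'\to Y$ (so that $Y'$ is smooth and the pulled-back foliated pair is crepant), one picks a reduced snc divisor $\Sigma$ on $Y'$ with $y\in\beta^{-1}(0)$ a zero-dimensional stratum. The key computation, following \cite[Lemma 3.1]{ACSS}, is that for any birational $\gamma\colon X''\to X'$ the discrepancy divisors satisfy
\[
\bigl(K_{\gamma^{-1}\cal F'}-\gamma^*K_{\cal F'}\bigr)-\bigl(K_{X''}+\gamma_*^{-1}p'^*\Sigma-\gamma^*(K_{X'}+p'^*\Sigma)\bigr)=\sum_E(1-\epsilon(E))E,
\]
so $(\cal F',\Delta')$ is log canonical near $D_y$ if and only if the \emph{classical} pair $(X',\Delta'+p'^*\Sigma)$ is. One then applies ordinary inversion of adjunction (as in \cite{MR4702587}) iteratively along the components of $p'^*\Sigma$ to reduce to $(D_y,\Delta_y)\cong(D_0,\Delta_0)$. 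This $\Sigma$-trick converts the foliated question into a classical one for which all divisorial valuations are handled at once; it is precisely the missing ingredient in your approach.
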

\begin{proof}
Note that if $D_0$ is contained in the support of $\Delta$, $\dim Y \ge 2$ and $q\colon \overline X\to X$ is the blow up of $X$ along $D_0$ 
with exceptional divisor $E$ then $E$ is $q^{-1}\cal F$-invariant and $a(E,\cal F,\Delta)<0=\epsilon(E)$. 
Thus, $(\cal F,\Delta)$ is not log canonical around $D_0$.  If $\dim Y = 1$, and $D_0$ is contained in the support of $\Delta$ then 
$(\mathcal F, \Delta)$ is not log canonical
by Lemma \ref{rmk_1}. Therefore we may assume that $D_0$ is not contained in the support of $\Delta$.

Let $\beta\colon Y'\to Y$ be a resolution of $Y$ and let 
$X'\coloneqq X\times_{Y} Y'$. 
Let $\alpha\colon X'\to X$  and $p'\colon X'\to Y$ be the induced morphisms. 
Then $p'$ is a smooth morphism and if $\cal F'$ is the foliation induced by $p'$, it follows that $\cal F'=\alpha^{-1}\cal F$. 
Let $\Delta'\coloneqq \alpha_*^{-1}\Delta$. Then $K_{\cal F'}+\Delta'=\alpha^*(K_{\cal F}+\Delta)$.

For any $y\in \beta^{-1}(0)$, let $D_y\coloneqq p'^{-1}(y)$ be the corresponding fibre. 
Note that $D_y$ is not contained in the support of $\Delta'$ and if $\Delta_y\coloneqq \Delta'|_{D_y}$ then $(D_y,\Delta_y)\cong (D_0,\Delta_0)$.  

Pick $y\in \beta^{-1}(0)$ and let $\Sigma$ be a reduced divisor on $Y'$ such that $(Y',\Sigma)$ is log smooth and $y$ is 
a zero-dimensional stratum of $\Sigma$. We claim that  $(\cal F',\Delta')$ is log canonical in a neighbourhood of $D_y$ if 
and only if $(X',\Delta'+p'^*\Sigma)$ is. 
Indeed, if $\gamma\colon X''\to X'$ is a birational morphism then, as in the proof of \cite[Lemma 3.1]{ACSS}, we have that
\[
K_{X''}+\gamma_*^{-1}(p'^*\Sigma)=\gamma^*(K_{X'}+p'^*\Sigma)+F
\]
and 
\[
K_{\gamma^{-1}\mathcal F'} = \gamma^*K_{\cal F'}+G
\]
where $F,G$ are $\gamma$-exceptional divisors such that 
$G=F+\sum(1-\epsilon(E))E$, where the sum runs over all the $\gamma$-exceptional prime divisors. Thus, our claim follows.

By inductively applying inversion of adjunction (see \cite[Theorem 1.1]{MR4702587}), we have that $(X',\Delta'+p'^*\Sigma)$ is log canonical in a neighbourhood of 
$D_y$ if and only if $(D_y,\Delta_y)$ is log canonical. Thus, we have that $(\cal F,\Delta)$ is log canonical (resp. log terminal) in 
a neighbourhood of $D_0$ if and only if  $(D_0,\Delta_0)$ is log canonical (resp. log terminal).\end{proof}

\begin{lemma}
\label{lem_adj_smooth}
Let $X$ be a normal variety and let $\mathcal F$ be a rank one foliation
on $X$ such that $K_{\mathcal F}$ is $\mathbb Q$-Cartier.
Let $Z \subset X$ be a subvariety which is not $\cal F$-invariant and it is not contained in $\sing^+\, \mathcal F$.
Let $W$ be a (possibly analytic) 
$\mathcal F$-invariant subvariety which contains $Z$ with $\dim W = \dim Z+1$.
Let $n\colon S \rightarrow W$ be the normalisation and let $\Delta \geq 0$ be a $\mathbb Q$-Cartier $\mathbb Q$-divisor on $X$ which does not contain $W$ in its support.

Then
\begin{enumerate}
\item 
there exists $\lambda>0$
such that $(\mathcal F, \lambda \Delta)$ is terminal at the generic point of $Z$;

\item if $({\mathcal F},\Delta)$ is log canonical  
 in a neighbourhood of the generic point of $Z$ and 
 $\mathcal F_S$ is the restricted foliation, 
then $(\mathcal F_S, {\rm Diff}_S(\cal F,\Delta))$ is log canonical 
 in a neighbourhood of 
the generic point of $n^{-1}(Z)$.
\end{enumerate}
\end{lemma}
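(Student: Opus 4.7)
I would prove both statements locally at the generic point $\eta$ of $Z$. First, following the proof of Lemma \ref{l_terminal}, I would use Lemma \ref{lem_invariant_cover} together with \cite[Corollary III.i.5]{McQP09} to replace $X$ by an index one cover of $K_{\mathcal F}$, reducing to the case where $K_{\mathcal F}$ is Cartier near $\eta$; this preserves invariance of $W$, non-invariance of $Z$, and the condition $Z \not\subset \sing^+\mathcal F$. Then \cite[Lemma I.2.1]{bm16} provides an analytic product chart $U \cong Y_0 \times \mathbb D$ in which $\mathcal F = \langle \partial_t \rangle$. Invariance of $W$ forces $W \cap U = V_0 \times \mathbb D$ with $V_0 \subset Y_0$ of dimension $\dim Z$, and non-invariance of $Z$ together with the dimension count forces $Z \cap U \to V_0$ to be generically finite and dominant.

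\textbf{For (1).} Lemma \ref{l_terminal} already gives that $(\mathcal F, 0)$ is terminal at $\eta$. I would then invoke the standard openness of terminality under small effective perturbations: pick a log resolution of $(X, {\rm Supp}\,\Delta)$ over a small neighborhood of $\eta$ that also resolves $\mathcal F$; only finitely many exceptional discrepancies are relevant, each is linear in $\lambda$, so a single small $\lambda > 0$ keeps all of them strictly above the terminal threshold, and further blow-ups cannot worsen terminality.

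\textbf{For (2).} In the product chart, $K_{\mathcal F}$ is trivial, and the normalization $S$ of $W \cap U$ is $\tilde V_0 \times \mathbb D$ for $\tilde V_0$ the normalization of $V_0$; the restricted foliation $\mathcal F_S$ is generated by the lift of $\partial_t$, so $K_{\mathcal F_S}$ is also trivial. Proposition-Definition \ref{prop_inv_subadj_analytic} then gives $\operatorname{Diff}_S(\mathcal F,\Delta) = n^w\Delta$ in a neighborhood of $n^{-1}(\eta)$. I would shrink $U$ to a product neighborhood $Y_0' \times \mathbb D'$ on which $(\mathcal F, \Delta)$ is log canonical, and apply Lemma \ref{l_adjunction} to $p \colon Y_0' \times \mathbb D' \to Y_0'$ to obtain $(D_v, \Delta|_{D_v})$ log canonical for each fibre $D_v$. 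Under the natural identification $D_v \cong F_{\tilde v}$ and $\Delta|_{D_v} = \operatorname{Diff}_S|_{F_{\tilde v}}$, the reverse direction of Lemma \ref{l_adjunction} applied to $p_S \colon \tilde V_0' \times \mathbb D' \to \tilde V_0'$ then yields log canonicity of $(\mathcal F_S, \operatorname{Diff}_S)$ on a neighborhood of each $F_{\tilde v}$; since $Z \to V_0$ is dominant, these neighborhoods cover a neighborhood of $n^{-1}(\eta)$.

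\textbf{Main obstacle.} The delicate step will be securing the reduction to the smooth submersion local model through the index one cover and \cite[Lemma I.2.1]{bm16}, and then verifying the identification $\operatorname{Diff}_S = n^w\Delta$ from triviality of both $K_{\mathcal F}$ and $K_{\mathcal F_S}$ in that model; once these are in hand, the bidirectional form of Lemma \ref{l_adjunction} transports log canonicity across the adjunction essentially mechanically. The perturbation argument for (1) is routine, the only subtlety being that one must bound discrepancies of all relevant exceptional divisors simultaneously on a fixed log resolution.
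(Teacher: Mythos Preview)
Your reduction to the local model via the index one cover and \cite[Lemma I.2.1]{bm16}, and your treatment of part (2), are essentially the same as the paper's. Both apply Lemma \ref{l_adjunction} in both directions across the fibres $L_v$ of the local submersion, using that in the product chart ${\rm Diff}_S(\cal F,\Delta)\vert_{L_v}=\Delta\vert_{L_v}$; the paper simplifies slightly by shrinking so that $W$ is already smooth (hence $n$ is an isomorphism), but the idea is identical.

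For part (1), however, your route diverges from the paper's and has a gap. You invoke ``standard openness of terminality under small effective perturbations'' by checking finitely many discrepancies on a fixed log resolution and asserting that further blow-ups cannot worsen terminality. For foliated pairs this last assertion is not automatic: one needs to control the discrepancy of \emph{every} exceptional divisor over $X$, and unlike the classical snc case there is no obvious reason the foliated discrepancies stay positive under further blow-ups of your chosen resolution. The paper avoids this by a different mechanism: it first uses Lemma \ref{l_adjunction} (one direction) to find $\mu>0$ with $(\cal F,\mu\Delta)$ log canonical at the generic point of $Z$; since $\cal F$ is terminal there (Lemma \ref{l_terminal}), the pair $(\cal F,\tfrac{\mu}{2}\Delta)$ is log terminal; finally \cite[Corollary III.i.4]{McQP09} says every exceptional divisor in this situation is $\cal F$-invariant, i.e.\ has $\epsilon(E)=0$, so log terminal and terminal coincide. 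Your argument could be repaired by invoking exactly this last fact to justify the ``further blow-ups'' step, but as written the key foliation-specific input is missing.
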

\begin{proof}
The problem is local about a general point $z \in Z$,
so we are free to shrink about a general point of $Z$.  
By \cite[Corollary III.i.5]{McQP09}
and Lemma \ref{lem_invariant_cover}, we may therefore 
replace $X$ by the index one cover associated to $K_{\mathcal F}$, and so
we may assume that $K_{\mathcal F}$ is Cartier.

We may also assume that $Z$ does not intersect $\sing^+\, \mathcal F$
 and so by \cite[Lemma 1.2.1]{bm16} 
there exists an analytic neighbourhood $U$ of $z$ 
and a smooth morphism $p\colon U \rightarrow V$ such that $\cal F|_U$ is the foliation induced by $p$.
If  $v \in V$ is a general closed point, set $L_v \coloneqq p^{-1}(v)$.
Thus, after possibly shrinking again, we may assume that $Z$ and $W$ are smooth. In particular, $n$ is an isomorphism. 

Since  $W$ is not contained in the support of $\Delta$, it follows that  for a general closed point $v \in p(Z)$, $L_v$ is not contained in the support of $\Delta$. Since $p$ is smooth, there exists $\mu >0$ such that 
 $(L_v, \mu \Delta|_{L_v})$ is log canonical. Thus,
Lemma \ref{l_adjunction} implies that $(\mathcal F, \mu \Delta)$ is log canonical in a neighbourhood of $L_v$. 
Since $v$ is general it follows that $(\mathcal F, \mu \Delta)$ is log canonical at the generic point of $Z$.  
By Lemma \ref{l_terminal},
we have that  $\mathcal F$
is terminal at the generic point of $Z$, and so $(\mathcal F, \frac{\mu}{2}\Delta)$ is log terminal
at the generic point of $Z$. By \cite[Corollary III.i.4]{McQP09}, we have that 
for any birational morphism $\pi\colon X' \rightarrow X$, 
the $\pi$-exceptional locus is $(\pi^{-1}\mathcal F)$-invariant,  and so in fact 
$(\mathcal F, \frac{\mu}{2} \Delta)$ is terminal at the generic point of $Z$. Thus, (1) follows. 

We now prove Item (2). 
For a general closed point $v \in p(Z)$, $L_v$ is not contained in $\Delta$ and so may apply Lemma \ref{l_adjunction} to see that
if $({\mathcal F},\Delta)$ is log canonical then  $(L_v, \Delta\vert_{L_v})$
is log canonical. 
Let $\cal F_{W}$ be the restricted foliation  on $W$, whose existence is guaranteed 
by Proposition-Definition \ref{prop_inv_subadj_analytic}
 Since $W$ is smooth, we have that 
$ {\rm Diff}_W(\cal F,\Delta)|_{L_v}=\Delta|_{L_v}$. Thus, 
 we may again apply Lemma \ref{l_adjunction}
to see that $(\mathcal F_W, {\rm Diff}_W(\cal F,\Delta))$ is log canonical in a neighbourhood of $L_v$.  Since $v$ is general 
we see that $(\mathcal F_W, {\rm Diff}_W(\cal F,\Delta))$ is log canonical in a neighbourhood of the generic point of $Z$.
\end{proof}

\begin{lemma}
\label{lem_produce_germ}
Let $X$ be a normal variety, let $\mathcal F$ be a rank one foliation on $X$ 
such that $K_{\mathcal F}$
is $\mathbb Q$-Cartier. 
Let $C \subset X$ be a curve which is not $\cal F$-invariant
and is not contained in $\sing^+\, \cal F$.

Then there exists a birational morphism $p\colon X' \rightarrow X$ such that  
if $\cal F' \coloneqq p^{-1}\cal F$ 
 then the following hold:
\begin{enumerate}
\item
$K_{\cal F'}+ E = p^*K_{\cal F}$ for some $p$-exceptional 
$\mathbb Q$-divisor $E\geq 0$; 

\item $p^{-1}$ is an isomorphism at the general point of $C$ and  if  $C'$ is the strict transform of $C$ in $X'$ then 
 $\cal F'$ is terminal at all points $P \in C'$; and
 
 \item after possibly replacing $X$ by an analytic neighbourhood of 
 $C$, there exist a  $\cal F'$-invariant surface 
 $\Gamma$ containing $C'$.
\end{enumerate}
\end{lemma}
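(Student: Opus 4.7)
The plan is to first reduce to the case where $K_{\mathcal F}$ is Cartier, then construct the model $p\colon X' \to X$ by a partial resolution of singularities for rank one foliations supported over the finitely many bad points of $\mathcal F$ along $C$, and finally to obtain the invariant surface $\Gamma$ as the analytic leaf-saturation of the strict transform $C'$.

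By Lemma \ref{l_terminal}, $\mathcal F$ is terminal at the generic point of $C$, so the set $T \subset C$ of points at which $\mathcal F$ is either singular in the sense of McQuillan or fails to be terminal is contained in $C \cap \sing^+ \mathcal F$ and is therefore a finite collection of closed points. Using \cite[Corollary III.i.5]{McQP09} together with Lemma \ref{lem_invariant_cover}, I would pass to the index-one cover of $K_{\mathcal F}$ in an analytic neighbourhood of $T$ and thereby assume that $K_{\mathcal F}$ is Cartier and that $\mathcal F$ is locally generated by a vector field. Since all conclusions of the lemma are local about $C$, working analytically in a neighbourhood of $C$ is permissible.

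Next, I would invoke a partial reduction of singularities for rank one foliations, in the spirit of \cite{McQ05} and \cite{McQP09}, applied locally over each point of $T$. The output should be a projective birational morphism $p\colon X' \to X$ with $\exc(p) \subset p^{-1}(T)$, such that $\mathcal F' := p^{-1}\mathcal F$ is terminal at every point of the strict transform $C'$ of $C$, and such that every $p$-exceptional prime divisor $F$ has $a(F,\mathcal F) \leq 0$. The last condition rewrites exactly as $K_{\mathcal F'} + E = p^{*} K_{\mathcal F}$ with $E \geq 0$ and yields (1); the terminality along $C'$, combined with the fact that $p$ is an isomorphism over the generic point of $C$, yields (2).

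For (3) I would work in an analytic neighbourhood of $C'$ and exploit the fact that, since $\mathcal F'$ is terminal at each point of $C'$ (hence in particular smooth there in the sense relevant to \cite{bm16}) and $C'$ is not $\mathcal F'$-invariant, \cite[Lemma I.2.1]{bm16} supplies, at each $q \in C'$, an analytic neighbourhood $U_q$ and a smooth morphism $\pi_q\colon U_q \to V_q$ whose fibres are the leaves of $\mathcal F'|_{U_q}$. Then $\Gamma_q := \pi_q^{-1}(\pi_q(C' \cap U_q))$ is a smooth analytic $\mathcal F'$-invariant surface in $U_q$ containing $C' \cap U_q$. Since each $\Gamma_q$ is intrinsically the leaf-saturation of $C' \cap U_q$, the patches agree on overlaps and glue to a single analytic $\mathcal F'$-invariant surface $\Gamma$ containing $C'$.

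The principal obstacle is the construction of $p$ in the second step: producing a model in which $\mathcal F'$ is terminal along $C'$ while keeping all extracted discrepancies non-positive. A naive blow-up procedure can easily introduce $p$-exceptional divisors of positive discrepancy and violate condition (1), so one has to use the structure theory of rank one foliation singularities of McQuillan and Panazzolo to ensure that only divisors with $a(F,\mathcal F) \leq 0$ are extracted, while simultaneously achieving terminality at every point of $C'$.
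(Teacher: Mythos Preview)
Your outline follows the paper's strategy, but two steps do not go through as written.

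\textbf{Reduction to the Cartier case.} Passing to the index-one cover changes the variety; to recover a birational morphism over the original $X$ you must descend, and your sketch omits this. The paper does not ``assume'' $K_{\mathcal F}$ is Cartier: it builds a global finite Galois cover $\sigma\colon Y\to X$ with group $G$, quasi-\'etale off a general ample divisor (hence \'etale at the generic point of $C$), so that $\sigma^*K_{\mathcal F}$ is Cartier near the preimages of the finitely many bad points $P_1,\dots,P_k\in C$. One then performs a $G$-\emph{equivariant} sequence of blow-ups $q\colon Y'\to Y$ in invariant centres and sets $X'\coloneqq Y'/G$. Equivariance is what makes $X'\to X$ a genuine birational morphism, and the inequality $E\ge 0$ in (1) descends via \cite[Lemma 3.4]{Druel21}.

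\textbf{The key input, and the surface $\Gamma$.} The ``principal obstacle'' you flag is resolved by \cite[Proposition I.2.4]{bm16}: it furnishes the sequence of blow-ups in $\mathcal G$-invariant points with $K_{q^{-1}\mathcal G}+E'=q^*K_{\mathcal G}$ and $E'\ge 0$, arranged so that the strict transform of $\widetilde C$ is \emph{disjoint from} $\sing^+q^{-1}\mathcal G$ near the preimages of the bad points, and it simultaneously produces an invariant surface germ $S$ containing $q_*^{-1}\widetilde C$. The surface $\Gamma$ is then simply $r(S)$, where $r\colon Y'\to X'$ is the quotient. Your alternative construction of $\Gamma$ by gluing local leaf-saturations via \cite[Lemma I.2.1]{bm16} has a gap: terminality of $\mathcal F'$ at $q\in C'$ does not imply $q\notin\sing^+\mathcal F'$, which is what that lemma requires. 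After quotienting by $G$ the variety $X'$ may be singular along $C'$, so there is no reason to expect a local smooth submersion there even when $\mathcal F'$ is terminal; hence your local patches $\Gamma_q$ need not exist at every point and there is nothing to glue.
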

\begin{proof} 
Since $C$ is not $\cal F$-invariant and
$C$ is not contained in $\sing^+\, \cal F$, by Lemma \ref{l_terminal}
 we have that 
$\cal F$ is terminal at the generic point of $C$.
Let $P_1, ..., P_k\in C$ be all the closed points at which $\cal F$ is not terminal. 
Let $H$ be a sufficiently general ample divisor on $X$  such that $P_1,\dots,P_k$ are not contained in its support. 
We may assume that 
$\mathcal O_X(mK_{\mathcal F})\vert_{X\setminus H} \cong \mathcal O_{X\setminus H}$
where $m>0$ is the Cartier index of $K_{\mathcal F}$.
Thus, we may find a finite Galois morphism 
$\sigma\colon Y \rightarrow X$ with Galois group $G$,
 which is quasi-\'etale outside
 $H$ and such that $\sigma^*K_{\cal F}$ is Cartier around any closed point $Q\in \widetilde C$ 
such that $\sigma(Q)\in  \{P_1,\dots,P_k\}$. 
 Let $\cal G \coloneqq \sigma^{-1}\cal F$ and let $\widetilde C\coloneqq \sigma^{-1}(C)$. 
 Then  
 $K_{\cal G}$ is Cartier around any closed point $Q\in \widetilde C$ 
such that $\sigma(Q)\in  \{P_1,\dots,P_k\}$
 and 
 $\cal G$ is terminal at all closed points $Q\in \widetilde C$
such that $\sigma(Q)\notin \{P_1,\dots,P_k\}$. 

By \cite[Proposition I.2.4]{bm16}, there exists a 
birational morphism $q\colon Y' \rightarrow Y$, obtained as a sequence of $G$-equivariant 
blow ups in $\cal G$-invariant points,  
such that in a neighbourhood of $q^{-1}(\sigma^{-1}(\{P_1, \dots, P_k\}))$ the strict transform $\widetilde C'$ of $\widetilde C$ in $Y'$ is 
disjoint from $\sing^+ q^{-1}\cal G$.
Moreover, 
\cite[Proposition I.2.4]{bm16}
implies that  $K_{q^{-1}\cal G} + E'= q^*K_{\cal G}$ where
$E' \geq 0$ is $q$-exceptional. 

We have that $G$ acts on $Y'$ and $q$  is $G$-invariant. Thus, 
if $X' \coloneqq Y'/G$ then there exists a birational morphism
$p\colon X' \rightarrow X$.  
It  follows by  \cite[Lemma 3.4]{Druel21} 
that $K_{\cal F'} +E = p^*K_{\cal F}$
where $\cal F' = p^{-1}\cal F$ and $E \geq 0$ is $p$-exceptional (see also \cite[Proposition 2.2]{CS20}). Thus, (1) follows. 

Notice  that $p^{-1}$ is an isomorphism outside $P_1,\dots,P_k$ and, in particular, $\cal F'$ is terminal at all points in $C'\setminus p^{-1}(\{P_1,\dots,P_k\})$. 
We now show that 
$\mathcal F '$
is terminal at all points of $C'$. By  \cite[Corollary III.i.4]{McQP09}, we have that exceptional divisor over $Y'$ and whose centre is in $\widetilde C' \cap q^{-1}(\sigma^{-1}(\{P_1, \dots, P_k\}))$,
 is invariant. Thus,  every exceptional divisor over $X'$ and whose centre is in $ C' \cap p^{-1}(\{P_1, \dots, P_k\}))$ is also invariant.  As in the proof of 
 \cite[Lemma 4.3]{Druel17} (see also \cite[Corollary III.i.5]{McQP09}), 
 we have that $\cal F'$ is terminal at any  point in $C'\cap p^{-1}(\{P_1,\dots,P_k\})$. Thus, (2) follows. 


By \cite[Proposition I.2.4]{bm16} there also exists a $q^{-1}\mathcal G$-invariant 
surface germ $S$ containing $q_*^{-1}\widetilde{C}$. We may take $\Gamma = r(S)$
where $r\colon Y' \rightarrow X'$ is the quotient map
and so (3) follows.
%
\end{proof}

\begin{lemma}
\label{lem_nef_on_sing}
Let $X$ be a normal 
variety and let $(\cal F, \Delta)$ be a  rank one foliated pair on $X$ such that 
$K_{\cal F}$ and $\Delta$ are $\mathbb Q$-Cartier and $\Delta\ge 0$.
Let $C \subset\sing^+\, \cal F$ be a curve
and suppose that $(\cal F, \Delta)$ is log canonical
at the generic point of $C$.

Then $(K_{\cal F}+\Delta)\cdot C\geq 0$.
\end{lemma}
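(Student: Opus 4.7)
My approach is to first show that $C$ is $\mathcal{F}$-invariant, then apply the adjunction formula for invariant subschemes (Proposition-Definition \ref{prop_inv_subadj_analytic}) to reduce the intersection $(K_\mathcal{F} + \Delta) \cdot C$ to a degree computation on the normalisation $S$ of $C$, and finally to control the resulting different by constructing an $\mathcal{F}$-invariant analytic surface germ through $C$ and reducing to a surface adjunction argument.

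First I would verify that $C$ is $\mathcal{F}$-invariant. Working analytically in a neighbourhood of a general point of $C$, let $\sigma\colon X' \to X$ be the index one cover of $K_\mathcal{F}$; then $K_{\mathcal{F}'}$ is Cartier, where $\mathcal{F}' := \sigma^{-1}\mathcal{F}$, and $\mathcal{F}'$ is locally generated by a vector field $\partial$. The hypothesis $C \subset \sing^+\, \mathcal{F}$ translates (by the very definition of $\sing^+$, after embedding $X'$ into a smooth ambient) to the vanishing of $\partial$ along $\sigma^{-1}(C)$. Hence $\partial(I_{\sigma^{-1}(C)}) \subset I_{\sigma^{-1}(C)}$, and Lemma \ref{lem_invariant_vector_fields} gives that $\sigma^{-1}(C)$ is $\mathcal{F}'$-invariant; since $\sigma$ is \'etale in codimension one, this descends to the $\mathcal{F}$-invariance of $C$.

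Since $\mathcal{F}$ has rank one and $C$ is $\mathcal{F}$-invariant, Proposition-Definition \ref{prop_inv_subadj_analytic} applies (condition (3) being automatic for rank one foliations). The restricted foliation $\mathcal{F}_S$ on the normalisation $n\colon S \to C$ is trivially $T_S$, so $K_{\mathcal{F}_S} = K_S$, and the adjunction formula yields
\[
n^*(K_\mathcal{F} + \Delta) \sim_{\mathbb{Q}} K_S + {\rm Diff}_S(\mathcal{F}, \Delta).
\]
Taking degrees on the complete curve $S$ gives
\[
(K_\mathcal{F} + \Delta) \cdot C = \deg K_S + \deg {\rm Diff}_S(\mathcal{F}, \Delta).
\]
When $g(S) \geq 1$ this is immediately $\geq 0$ as ${\rm Diff}_S(\mathcal{F}, \Delta) \geq 0$. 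The essential case is $S \cong \mathbb{P}^1$, where one requires $\deg {\rm Diff}_S(\mathcal{F}, \Delta) \geq 2$.

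To handle the case $g(S) = 0$, I would exploit the log canonical hypothesis together with the vanishing of $\partial$ along $C$ to construct an $\mathcal{F}$-invariant analytic surface germ $\Gamma$ containing $C$. By \cite[Fact I.ii.4]{McQP09}, log canonicity at the generic point of $C$ ensures that the semi-simple part of the linearisation of $\partial$ is non-zero; an invariant-manifold argument applied along a non-zero eigenvalue direction then produces such a germ. Applying Proposition-Definition \ref{prop_inv_subadj_analytic} to $\Gamma$ yields a restricted rank-one foliation $\mathcal{F}_\Gamma$ on the surface $\Gamma$, within which $C$ remains an invariant curve; a second application of adjunction to the invariant divisor $C \subset \Gamma$, combined with the classical Brunella-type formula on $\Gamma$ (cf.\ Proposition \ref{prop_explicit_calc}), then delivers the required lower bound on $\deg {\rm Diff}_S(\mathcal{F}, \Delta)$ from the accumulated singularities of $\mathcal{F}_\Gamma$ along $C$. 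The main obstacle is the construction of the invariant surface $\Gamma$ and the careful bookkeeping across the two successive adjunctions; the LC hypothesis is essential here, as without it the invariant surface may fail to exist, mirroring the failure of adjunction for invariant divisors discussed in Section \ref{s_failure}.
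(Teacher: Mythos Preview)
The paper's proof is much more direct than your outline. For $\Delta = 0$ it simply cites \cite[\S 4.1]{bm16} and \cite[Fact II.d.3]{McQ05}, which already establish $K_{\mathcal F}\cdot C \geq 0$ for curves contained in $\sing^+\,\mathcal F$. For general $\Delta$, the paper reduces to the previous case by showing $C \not\subset {\rm Supp}\,\Delta$ (so that $\Delta\cdot C \geq 0$): after passing to the index-one cover, $C$ is $\mathcal F$-invariant, and \cite[Lemma 1.1.3]{bm16} shows that the blow-up along $C$ extracts an $\mathcal F$-invariant exceptional divisor $E$ with $a(E,\mathcal F) \leq 0 = -\epsilon(E)$; log canonicity of $(\mathcal F,\Delta)$ at the generic point of $C$ then forces $C$ out of ${\rm Supp}\,\Delta$.

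Your approach has a genuine gap at the application of Proposition-Definition \ref{prop_inv_subadj_analytic}. Hypothesis (2) there requires that $D$ not be contained in $\sing\,\mathcal F$, and you cannot verify this: your own first step shows that on the index-one cover the generating vector field $\partial$ vanishes along $\sigma^{-1}(C)$, which is exactly the statement that $\sigma^{-1}(C)\subset \sing\,\mathcal F'$, whence $C\subset \sing\,\mathcal F$ (cf.\ Lemma \ref{l_sing} and \cite[Corollary 5.14]{Druel21}; when $X$ is smooth one has $\sing\,\mathcal F = \sing^+\,\mathcal F$ outright). Concretely, the restricted Pfaff field $\Omega^1_C \to \mathcal O_C(K_{\mathcal F})$ is then identically zero, so no formula of the shape $K_{\mathcal F}|_S \sim K_S + {\rm Diff}_S(\mathcal F,\Delta)$ with an effective different is available. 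The remainder of your sketch, producing an invariant surface germ from an eigendirection of the semi-simple part and running Brunella-type adjunction on it, is essentially an attempt to reprove the cited results of \cite{bm16} and \cite{McQ05}; those arguments go through local index computations rather than an effective-different bound, and the pointwise separatrices you describe do not obviously patch to a single analytic surface containing all of $C$.
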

\begin{proof}
First suppose that $\Delta = 0$.
When $\cal F$ is Gorenstien this is proven in \cite[\S 4.1]{bm16},
the general case where $\cal F$ is only $\bb Q$-Gorenstein is proven
in \cite[Fact II.d.3]{McQ05}. 

Assume now that $\Delta\neq 0$. 
We claim that $C$ is not contained in ${\rm Supp}\, \Delta$. 
Supposing the claim we have that $\Delta\cdot C \geq 0$
and by the previous case it follows that $(K_{\cal F}+\Delta)\cdot C \geq 0$.

We now prove the  claim.  We may replace $X$
by a neighbourhood of the generic point of $C$ and so we may assume that the index 
one cover associated to $K_{\mathcal F}$ exists. 
By \cite[Corollary III.i.5]{McQP09} and by the definition of $\sing^+\, \mathcal F$, we may also freely replace $X$ by this index 
one cover, and so we may freely assume that $K_{\mathcal F}$ is Cartier and so $\mathcal F$ 
is generated by a vector field $\partial$.
Since $C \subset \sing^+\, \mathcal F$, it follows that $C$ is $\mathcal F$-invariant. Thus,  \cite[Lemma 1.1.3]{bm16}  implies that 
 the blow up $b\colon X' \rightarrow X$ along $C$ extracts 
a divisor $E$ of discrepancy $\leq 0$. Since $E$ is also $b^{-1}\cal F$-invariant 
it follows that 
the discrepancy of $E$ is
in fact $\leq -\epsilon(E)$.  Since $(\mathcal F, \Delta)$
is log canonical at the generic point of $C$, 
we may then conclude that $C$ is not contained
in ${\rm Supp}\, \Delta$ and our claim follows.
\end{proof}

\begin{theorem}
\label{thm_cone}
Let $X$ be a normal projective
 variety, let $\cal F$ be a rank one foliation on $X$
and let $\Delta \geq 0$ be a $\mathbb Q$-divisor on $X$ such that $K_{\cal F}$ and $\Delta$ are $\mathbb Q$-Cartier.

Then there are $\mathcal F$-invariant rational curves $C_1,C_2,\dots$ on $X$ not contained in ${\sing^+ }\, \mathcal F$
 such that 
\[
0<-(K_{\cal F}+\Delta)\cdot C_i\le 2\dim X\]
and
$$\overline{\rm NE}(X)=\overline{\rm NE}(X)_{K_{\cal F}+\Delta\ge 0}+Z_{-\infty}+
\sum_i \mathbb R_+[C_i]$$
where $Z_{-\infty}\subset \overline{\rm NE}(X)$ is a closed subcone contained in the span of the images of
$\overline{\rm NE}(T) \rightarrow \overline{\rm NE}(X)$ of non-log canonical centres $T \subset X$
 of $(\cal F, \Delta)$.
\end{theorem}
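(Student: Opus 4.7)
The plan is to run the Kawamata--Shokurov abstract cone machinery, using the adjunction results of this section to reduce the geometric input to a bend-and-break argument on the $\mathcal F$-invariant surface germ produced by Lemma \ref{lem_produce_germ}. Concretely, it suffices to show that every $(K_{\mathcal F}+\Delta)$-negative extremal ray $R$ of $\overline{\mathrm{NE}}(X)$ not contained in $Z_{-\infty}$ is generated by an $\mathcal F$-invariant rational curve $C$ with $0<-(K_{\mathcal F}+\Delta)\cdot C\le 2\dim X$. Any class in $R$ represented by a curve lying in a non-log canonical centre of $(\mathcal F,\Delta)$ is by definition already in $Z_{-\infty}$, so I may choose a representative $C$ of $R$ with $(\mathcal F,\Delta)$ log canonical at the generic point of $C$. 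If $C\subset \sing^+\mathcal F$, Lemma \ref{lem_nef_on_sing} yields $(K_{\mathcal F}+\Delta)\cdot C\ge 0$, contradicting the choice of $R$, so we may also assume $C\not\subset \sing^+\mathcal F$.

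If $C$ is itself $\mathcal F$-invariant, then by Lemma \ref{rmk_1} no component of $\Delta$ is invariant, hence $C\not\subset{\rm Supp}\,\Delta$ and $\Delta\cdot C\ge 0$, so $K_{\mathcal F}\cdot C<0$. Since $\mathcal F$ has rank one and $C\not\subset\sing\mathcal F$ (by Lemma \ref{l_sing}), I apply the invariant subadjunction of Proposition-Definition \ref{prop_inv_subadj_analytic} to the normalisation $n\colon \tilde C\to C$: the restricted foliation on $\tilde C$ is simply $T_{\tilde C}$, so $n^*K_{\mathcal F}\sim K_{\tilde C}+\mathrm{Diff}_{\tilde C}(\mathcal F)$ with effective different. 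Then $K_{\mathcal F}\cdot C\ge 2g(\tilde C)-2$, which combined with $K_{\mathcal F}\cdot C<0$ forces $g(\tilde C)=0$; so $C$ is rational and $-(K_{\mathcal F}+\Delta)\cdot C\le -K_{\mathcal F}\cdot C\le 2\le 2\dim X$.

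If $C$ is not $\mathcal F$-invariant, I invoke Lemma \ref{lem_produce_germ} to obtain a birational morphism $p\colon X'\to X$ with $K_{\mathcal F'}+E=p^*K_{\mathcal F}$ for some effective $p$-exceptional $E$, with $\mathcal F'=p^{-1}\mathcal F$ terminal along the strict transform $C'$, and with an $\mathcal F'$-invariant analytic surface germ $\Gamma\supset C'$. Let $n\colon S\to \Gamma$ be the normalisation. Setting $\Delta'\coloneqq E+p_*^{-1}\Delta$ one has $p^*(K_{\mathcal F}+\Delta)\equiv K_{\mathcal F'}+\Delta'$ modulo further $p$-exceptional effective terms, and $(\mathcal F',\Delta')$ remains log canonical at the generic point of $C'$. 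Applying Proposition-Definition \ref{prop_inv_subadj_analytic} to restrict $(\mathcal F',\Delta')$ to the invariant surface germ $\Gamma$, and Lemma \ref{lem_adj_smooth}\,(2) to propagate log canonicity, I obtain a log canonical rank one foliated pair $(\mathcal F_S,\mathrm{Diff}_S(\mathcal F',\Delta'))$ on $S$ through which the preimage $\tilde C$ of $C'$ passes, with $(K_{\mathcal F_S}+\mathrm{Diff}_S)\cdot \tilde C<0$. At this stage the surface-case cone theorem for log canonical rank one foliated pairs (\cite{bm16}, \cite{McQ05}, \cite{Spicer20}) produces an $\mathcal F_S$-invariant rational curve of bounded length through $\tilde C$, and pushing forward along $p\circ n$ gives the desired $\mathcal F$-invariant rational curve in $X$ representing $R$.

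The hardest step is the last one: since $\Gamma$ and hence $S$ are only analytic surface germs, the surface cone theorem cannot be invoked off the shelf. Following \cite{bm16} this is handled by a compactification argument, choosing a projective foliated surface extending $(S,\mathcal F_S,\mathrm{Diff}_S)$ while preserving log canonicity along $\tilde C$, and then applying a family version of Mori's bend-and-break adapted to the foliated setting; this is what yields the bound $2\dim X$, which reflects deformation inside the ambient $X$ rather than the a priori smaller surface-case bound. The adjunction and discrepancy control developed earlier in this section, together with Lemma \ref{lem_produce_germ} and Lemma \ref{lem_adj_smooth}, are the essential tools that make all of these reductions compatible with log canonicity.
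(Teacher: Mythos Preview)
Your proposal has the right skeleton but two genuine gaps that the paper's proof handles differently.

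\textbf{Representing the ray by a curve.} You begin by choosing an irreducible curve $C$ with $[C]\in R$ and $(\mathcal F,\Delta)$ log canonical at the generic point of $C$. This is not automatic: an extremal ray of $\overline{\mathrm{NE}}(X)$ is a priori only a limit of curve classes, and nothing guarantees a single irreducible curve represents it, let alone one avoiding the non-lc locus. The paper does not assume this. Instead it takes an \emph{exposed} extremal ray, chooses a nef supporting class $H_R\sim_{\mathbb R}K_{\mathcal F}+\Delta+H$, and uses Nakamaye's theorem on the null locus $\mathrm{Null}\,H_R$ to locate a component $W$ whose cone of curves already contains $R$. This is what produces the curve and simultaneously gives the crucial bigness dichotomy ($H_R\vert_S$ not big) needed later. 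Your invariant-curve argument, while pleasant, presupposes this step.

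\textbf{The non-invariant case.} You try to produce the rational curve on the analytic surface germ $S$ via a ``compactification argument'' and an appeal to a surface cone theorem; you acknowledge this is the hard step and leave it as a sketch. The paper avoids this entirely: once $W$ is not $\mathcal F$-invariant (and not in $\sing^+\mathcal F$), the paper shows this case \emph{cannot occur}. Using that $H_R$ is big, one finds an effective $G\sim_{\mathbb Q}H_R-\epsilon H$ whose support contains $W$, so a general curve $C\subset W$ has $G\cdot C<0$ and hence $C^2<0$ on the invariant surface germ $\Gamma$ produced by Lemma~\ref{lem_produce_germ}. Adjunction on $\Gamma$ then gives $(K_{\mathcal F_Y}+\Theta+C)\cdot C<0$, contradicting $(K_{\mathcal F_Y}+\Theta+C)\cdot C\ge 0$ from \cite[Proposition 3.4]{Spicer20}. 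No compactification or deformation on an analytic germ is needed.

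In short: the null locus machinery is not optional scaffolding but the engine that both supplies the curve and makes the non-invariant case collapse to a contradiction. The bend-and-break input (\cite[Corollary 2.28]{Spicer20}) is applied only in the invariant case, on the normalisation of $W$, where $H_R$ fails to be big.
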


\begin{proof}
Let $R \subset \overline{NE}(X)$ be a $(K_{\cal F}+\Delta)$-negative exposed extremal ray (e.g. see \cite[Definition 6.1]{Spicer20})
and let $H_R$ be a supporting hyperplane to $R$. After possibly rescaling $H_R$ we may assume that there exists an ample $\mathbb R$-divisor  $H$  such that $H_R\sim_{\mathbb R}K_{\cal F}+\Delta +H$.  
Let $W$ be a component of the null locus ${\rm Null} \, H_R$ of $H_R$ 
(e.g. see \cite[Definition 1.3]{birkar17}) and let $n\colon S\to W$ be its normalisation. 
Thus,  $H_R|_S$ is not big and by Nakamaye's Theorem \cite[Theorem 1.4]{birkar17}, 
it follows that $W$ is a component of the stable base locus of $H_R-A$ for any 
sufficiently small ample $\mathbb R$-divisor $A$ on $X$. 
We want to show that,  after possibly replacing $W$ by another irreducible component,  $R$ is contained in the image of 
$\overline{\rm NE}(W) \rightarrow \overline{\rm NE}(X)$.
Indeed, first note that we can take $A$ so that $H_R-A$ is a $\mathbb Q$-divisor. 
Since $H_R\cdot \xi=0$ for all $\xi$ such that $[\xi]\in R$, we may find a sequence of classes $[\xi_i]\in {\rm NE}(X)$ such that
$(H_R-A)\cdot \xi_i<0$ and whose limit is a non zero element $[\xi_0]\in R$. Thus, after possibly taking a subsequence and after possibly replacing $\xi_i$ by an irreducible curve, we may assume that each $\xi_i$ is contained in the same connected component of the stable base locus of $H_R-A$.  Thus, our claim follows. 

By  \cite[Definition/Summary I.ii.6]{McQP09}, the union of all the non-log canonical centres of $(\cal F,\Delta)$ is closed in $X$. 
If $(\cal F,\Delta)$ is not log canonical at the generic point of $W$ then $R\subset Z_{-\infty}$ and we are done.
Thus, we may assume that  $(\cal F,\Delta)$ is log canonical at the generic point of $W$.

\medskip

We now distinguish three cases. Suppose first that $W$ is contained in $\sing^+\, \mathcal F$. 
In this case, Lemma \ref{lem_nef_on_sing} implies that every curve in $W$ which is 
$(K_{\mathcal F}+\Delta)$-negative 
is contained in a non-log canonical centre of $(\mathcal F, \Delta)$. 
Thus, there exists a subvariety $T\subset W$ which is a non-log canonical centre of $(\cal F,\Delta)$ and such
that $R$ is contained in the image of $\overline{\rm NE}(T) \rightarrow \overline{\rm NE}(X)$ and  we may conclude. 

\medskip 

Now suppose that $W$ is not contained in $\sing^+\, \mathcal F$ and is $\mathcal F$-invariant. In particular,  \cite[Lemma I.1.3]{bm16} implies that  $W$ is a log canonical centre for $\cal F$. Thus, since $(\cal F,\Delta)$ is log canonical at the generic point of $W$, it follows that $W$ is not contained in the support of $\Delta$.  
Let $\cal F_{S}$ be the restricted foliation  on $S$, whose existence is guaranteed 
by Proposition-Definition \ref{prop_inv_subadj_analytic} and let $\Delta_{S}\coloneqq {\rm Diff}_S(\cal F,\Delta)$. 
Since $H_R|_S$ is not big, we may apply 
\cite[Corollary 2.28]{Spicer20}
to conclude that $S$ is covered by $(K_{\cal F_S}+\Delta_S)$-negative rational curves $C$
tangent to $\cal F$ which span $R$ 
and such that 
\[0<-(K_{\cal F_{S}}+\Delta_{S})\cdot C \le 2\dim S,
\]
(e.g. see the proof of \cite[Theorem 6.3]{Spicer20} for more details). Thus, we may easily conclude.

\medskip

Finally suppose that $W$ is not contained in ${\sing}^+\, \mathcal F$ and is not $\mathcal F$-invariant. 
We show that this case does not in fact occur by showing that it leads to a contradiction. 
In this case, $W$ is a proper subvariety of $X$ and $H_R$ is big. 
Let $0<\epsilon\ll 1$ be a rational  number such that $H_R-\epsilon H$ is big and ${\rm Null}\, H_R$ coincides with  the stable base locus of $H_R-\epsilon H$. Let $G\sim_{\mathbb Q} H_R-\epsilon H$ be an effective $\mathbb Q$-divisor. Since $H_R|_S$ is not big and $H$ is ample, there exists a curve $C\subset S$ passing through the general point of $S$ and such that  $G\cdot C<0$. Moreover, we have  
\[
(K_{\mathcal F}+ \Delta)\cdot C = (H_R-H)\cdot C=\left( G - (1-\epsilon)H\right)\cdot C<0. 
\]
 
Note that, from now on, we no longer require   $K_{\cal F}$ and $\Delta$ to be $\mathbb Q$-Cartier, but just that $K_{\cal F}+\Delta$ is $\mathbb Q$-Cartier. Thus, we may replace $X$ by a model guaranteed to exist by 
Lemma \ref{lem_produce_germ}, and so we may find a germ of a $\cal F$-invariant surface $\Gamma$ containing $C$. 

Since $W$ is not $\cal F$-invariant and $\Gamma$ intersect the general point of $W$, it follows that $\Gamma$ is not contained in $W$.  In particular, since $W$ is a component of the stable base locus of $H_R-\epsilon H$, after possibly replacing $G$ by an effective $\mathbb Q$-divisor $G'$ which is $\mathbb Q$-linearly equivalent to  $G$, we may assume that $\Gamma$ is not contained in the support of $G$. 
Let $\nu\colon Y\to \Gamma$ be the normalisation.
By Proposition-Definition \ref{prop_inv_subadj_analytic}, if $\cal F_Y$ is the foliation induced on $Y$ then we may write 
\[
(K_{\mathcal F}+  \Delta)\vert_Y \sim_{\mathbb Q}  K_{\mathcal F_Y}+\Delta_Y
\]
where $\Delta_Y\coloneqq {\rm Diff}_Y(\cal F,\Delta)$.
By Lemma \ref{lem_adj_smooth}, we have that 
$(\mathcal F_Y,  \Delta_Y)$ is log canonical at the generic point of $C$ and, in particular, $m_C\Delta_Y\le 1$. Let $t\ge 0$ such that $\Delta_Y+tC=\Theta+C$ where $\Theta\ge 0$ is a $\mathbb Q$-divisor whose support does not contain $C$. Let $\mu>0$ be a rational number such that $m_C(\mu G|_Y)=1$. Then, considering $C$ as a curve in $Y$, we have $C^2\le C\cdot \mu G|_Y<0$ and so 
\[
(K_{{\mathcal F}_Y}+\Theta+C)\cdot C=(K_{\cal F}+\Delta)\cdot C+tC^2<0.
\]
On the other hand, by   \cite[Proposition 3.4]{Spicer20} (note that the proposition is stated to hold for projective varieties, but it works equally well in our context) and since the restricted foliation on $C$ is the foliation by points, we have that $(K_{{\mathcal F}_Y}+\Theta+C)\cdot C\ge 0$, a contradiction. 
Thus, our result follows. 
\end{proof}

Using the same methods as in the last case of the proof above, we have the following: 
\begin{corollary}
	\label{cor_null}
Let $X$ be a normal projective variety, let $\cal F$ be a rank one foliation on $X$ and let $\Delta\ge 0$ be a $\mathbb Q$-Cartier divisor such that $
K_{\cal F}$ and $\Delta$ are $\mathbb Q$-Cartier. Assume that 
$K_{\cal F}+\Delta$ is nef and
$(\cal F,\Delta)$ is log canonical away from a finite set of  closed points. 

Then ${\rm Null} \, (K_{\cal F}+\Delta)$ is a union of $\cal F$-invariant subvarieties, and subvarieties contained in $\sing^+\, \mathcal F$. 
\end{corollary}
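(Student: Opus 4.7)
The plan is to mimic the final case in the proof of Theorem~\ref{thm_cone}. Write $L \coloneqq K_{\mathcal F}+\Delta$ and let $W$ be an irreducible component of ${\rm Null}(L)$; the goal is to show $W$ is either $\mathcal F$-invariant or contained in $\sing^+\, \mathcal F$. If $L$ is not big then ${\rm Null}(L) = X$, which is trivially $\mathcal F$-invariant; closed points are similarly trivial. So we may assume $L$ is nef and big and that $\dim W \geq 1$, in which case the hypothesis forces $(\mathcal F, \Delta)$ to be log canonical at the generic point of $W$. Suppose for contradiction that $W$ is neither $\mathcal F$-invariant nor contained in $\sing^+\, \mathcal F$.

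Fix an ample $A$. By Nakamaye's theorem, ${\rm Null}(L)$ coincides with ${\bf B}(L - \epsilon A)$ for $0 < \epsilon \ll 1$, so $W$ lies in the support of every general effective $G \sim_{\mathbb Q} L - \epsilon A$. Let $n\colon S \to W$ be the normalisation. Since $L|_S$ is nef but not big, by BDPP one finds a movable curve $C \subset S$, through a general point of $W$, with $L \cdot C$ arbitrarily small relative to $A \cdot C$; this forces $G \cdot C < 0$. For a general such $C$, it is neither $\mathcal F$-invariant nor contained in $\sing^+\, \mathcal F$. Applying Lemma~\ref{lem_produce_germ}, pass to a birational model $p\colon X' \to X$ satisfying $K_{\mathcal F'}+E = p^*K_{\mathcal F}$ with $E \geq 0$ exceptional, on which the strict transform $C' \subset X'$ is terminal for $\mathcal F' = p^{-1}\mathcal F$ and lies in an $\mathcal F'$-invariant surface germ $\Gamma$; by a general choice of $G$ we may also arrange $\Gamma \not\subset {\rm Supp}(G)$.

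Let $\nu\colon Y \to \Gamma$ be the normalisation and $\tilde C \subset Y$ the preimage of $C'$, and let $\Delta' = p_*^{-1}\Delta$. Proposition-Definition~\ref{prop_inv_subadj_analytic} produces a restricted foliation $\mathcal F_Y$ and a different $\Delta_Y$ with $(K_{\mathcal F'}+\Delta')|_Y \sim_{\mathbb Q} K_{\mathcal F_Y}+\Delta_Y$, while Lemma~\ref{lem_adj_smooth}(2) ensures $(\mathcal F_Y, \Delta_Y)$ is log canonical at the generic point of $\tilde C$. For general $C$, $\tilde C$ is not in the support of $\Delta_Y$, so set $\Theta \coloneqq \Delta_Y$; with $\mu \coloneqq 1/m_{\tilde C}(p^*G|_Y) > 0$, the same calculation as in the final case of Theorem~\ref{thm_cone} yields
\[
(K_{\mathcal F_Y} + \Theta + \tilde C) \cdot \tilde C \leq L \cdot C + \mu\, G \cdot C = (1+\mu)\, L \cdot C - \mu\, \epsilon\, A \cdot C.
\]
Choosing $C$ with $L \cdot C/(A \cdot C)$ sufficiently small makes the right-hand side strictly negative, contradicting the non-negativity of this intersection forced by \cite[Proposition~3.4]{Spicer20}, since $\tilde C$ is non-$\mathcal F_Y$-invariant with restricted foliation equal to the foliation by points.

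The main obstacle, compared with Theorem~\ref{thm_cone}, is that $L$ is only nef, so $L \cdot C \geq 0$ for every curve $C$ and one cannot directly transport the theorem's inequality $L \cdot C < 0$. The contradiction must instead be extracted quantitatively: one uses the non-bigness of $L|_S$ via BDPP to make $L \cdot C$ arbitrarily small while retaining $G \cdot C < 0$, and balances this against the multiplicity $\mu$ (which depends on $G$ and on the chosen family of curves) in the key inequality. Coordinating these choices is the principal technical point of the argument.
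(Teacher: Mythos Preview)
Your strategy matches the paper's, which only says ``the same methods as in the last case'' of Theorem~\ref{thm_cone} apply; you have correctly identified that the passage from $H_R = K_{\mathcal F}+\Delta+H$ (with $H$ ample) to $L=K_{\mathcal F}+\Delta$ merely nef removes the inequality $(K_{\mathcal F}+\Delta)\cdot C<0$, so the contradiction must be recovered quantitatively. There are, however, three points to repair.

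\emph{(i)} On $X'$ you must take $\Delta'$ so that $K_{\mathcal F'}+\Delta' = p^*(K_{\mathcal F}+\Delta)$ rather than $\Delta'=p_*^{-1}\Delta$; otherwise $(K_{\mathcal F_Y}+\Delta_Y)\cdot\tilde C$ differs from $L\cdot C$ by exceptional contributions and your displayed inequality does not hold. This is exactly what the theorem's proof does when it ``replaces $X$'' by the model from Lemma~\ref{lem_produce_germ}.

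\emph{(ii)} The assertion that $\tilde C$ is not contained in ${\rm Supp}\,\Delta_Y$ for general $C$ is unjustified: nothing rules out $W\subset{\rm Supp}\,\Delta$, in which case every such $C$ lies in ${\rm Supp}\,\Delta$ and $\tilde C$ may well appear in $\Delta_Y$. Following the theorem you should set $t=1-m_{\tilde C}\Delta_Y\in[0,1]$ and obtain $(K_{\mathcal F_Y}+\Theta+\tilde C)\cdot\tilde C = L\cdot C + t\,\tilde C^2$; the argument then needs $t>0$, while Lemma~\ref{lem_adj_smooth}(2) only gives $t\ge 0$. You must supply a reason why $m_{\tilde C}\Delta_Y<1$ for a suitably chosen $C$, since if $t=0$ your inequality reduces to $L\cdot C\ge 0$ and yields no contradiction.

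\emph{(iii)} For the balancing step you need $\mu=1/m_{\tilde C}(p^*G|_Y)$ (and $t$) bounded below uniformly as $C$ ranges over curves with $(L\cdot C)/(A\cdot C)\to 0$. For $\mu$ this follows once one observes that, for general $C$, the leaf through a general point of $C$ is transverse to $W$, so $\Gamma$ meets $W$ transversally along $C$ and $m_{\tilde C}(p^*G|_Y)=m_W(G)$ is independent of $C$. You flag this as ``the principal technical point'' but do not resolve it; without the bound the final inequality is not established.
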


In \cite[Theorem 1.2]{CouPer06} a dynamical characterisation of ample line bundles on smooth 
surfaces was provided.
As a consequence of the above theorem we are able to extend this to higher dimensions:

\begin{corollary}
\label{cor_ampleness}
Let $X$ be a normal projective variety and let $L$ be a $\mathbb Q$-Cartier divisor. 
Suppose that
\begin{enumerate}
\item $L^{\dim X} \neq 0$; 

\item for some $q >0$ there exists a rank one foliation $\mathcal F$ such that $K_{\cal F}$ is $\mathbb Q$-Cartier and  
$K_{\mathcal F} \equiv qL$; and 

\item $\dim \sing^+ \, \mathcal F\le 0$ and $\mathcal F$ admits no
invariant positive dimensional subvarieties.
\end{enumerate}

Then $L$ is ample.
\end{corollary}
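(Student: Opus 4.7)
The plan is to work throughout with $K_{\mathcal F}$ in place of $L$, since $K_{\mathcal F}\equiv qL$ with $q>0$ makes the two classes share nefness, bigness, and ampleness. The argument has two stages: first establish that $K_{\mathcal F}$ is nef via Theorem \ref{thm_cone}, and then upgrade to ampleness using Corollary \ref{cor_null} together with Nakamaye's theorem. The conceptual engine in both stages is Lemma \ref{l_terminal}, which forces every pathological locus for $\mathcal F$ to be either $\mathcal F$-invariant or contained in $\sing^+\mathcal F$.

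For nefness, I apply Theorem \ref{thm_cone} to $(\mathcal F,0)$ to obtain
$$\overline{\rm NE}(X)=\overline{\rm NE}(X)_{K_{\mathcal F}\ge 0}+Z_{-\infty}+\sum_i \mathbb R_+[C_i],$$
with the $C_i$ being $\mathcal F$-invariant rational curves and $Z_{-\infty}$ supported on images of non-log-canonical centres of $(\mathcal F,0)$. Assumption (3) rules out the $C_i$ outright. For $Z_{-\infty}$, Lemma \ref{l_terminal} gives that if a subvariety $T\subset X$ is neither $\mathcal F$-invariant nor contained in $\sing^+\mathcal F$, then $\mathcal F$ is terminal at the generic point of $T$; in particular, every non-log-canonical centre of $(\mathcal F,0)$ must be $\mathcal F$-invariant or contained in $\sing^+\mathcal F$, and by (3) every such subvariety is zero-dimensional. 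Hence all non-lc centres are points, $Z_{-\infty}=0$, and $\overline{\rm NE}(X)=\overline{\rm NE}(X)_{K_{\mathcal F}\ge 0}$, so $K_{\mathcal F}$ is nef.

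Now $K_{\mathcal F}$ is nef and $L^{\dim X}\neq 0$, so $K_{\mathcal F}^{\dim X}>0$ and $K_{\mathcal F}$ is also big. I apply Corollary \ref{cor_null} with $\Delta=0$; its hypothesis that $(\mathcal F,0)$ be log canonical off a finite set of closed points was already verified above. The conclusion is that ${\rm Null}\,(K_{\mathcal F})$ is a union of $\mathcal F$-invariant subvarieties and subvarieties contained in $\sing^+\mathcal F$. By (3) every such subvariety is a point, while by definition ${\rm Null}\,(K_{\mathcal F})$ is a union of positive-dimensional subvarieties; hence ${\rm Null}\,(K_{\mathcal F})=\emptyset$. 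Since $K_{\mathcal F}$ is nef and big, Nakamaye's theorem identifies the null locus with the augmented base locus $\mathbf B_+(K_{\mathcal F})$, so $\mathbf B_+(K_{\mathcal F})=\emptyset$, meaning $K_{\mathcal F}$ is ample. Consequently $L$ is ample.

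The argument is a fairly direct assembly of results already developed in the paper; the only real obstacle is bookkeeping, namely checking that the foliated log-canonicity hypothesis of Corollary \ref{cor_null} and the extraneous terms $Z_{-\infty}$ and $\sum\mathbb R_+[C_i]$ in Theorem \ref{thm_cone} both vanish, and both of these reduce to the single observation—via Lemma \ref{l_terminal} and assumption (3)—that the non-terminal locus of $\mathcal F$ is finite.
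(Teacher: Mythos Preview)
Your proof is correct and follows essentially the same approach as the paper: use Theorem~\ref{thm_cone} together with assumption~(3) and Lemma~\ref{l_terminal} to obtain nefness, then bigness from~(1), then Corollary~\ref{cor_null} to force ${\rm Null}\,(K_{\mathcal F})=\emptyset$, and conclude ampleness. The only cosmetic difference is that the paper invokes the Nakai--Moishezon criterion at the last step rather than Nakamaye's theorem, but the two are equivalent here.
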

\begin{proof}
By (3) and by Theorem \ref{thm_cone}, it follows that $L$ is nef and so by (1) we have that $L^{\dim X}>0$, and hence $L$ is big.
	By Lemma \ref{l_terminal}, $\cal F$ is log canonical outside $ \sing^+ \, \mathcal F$ and by Corollary \ref{cor_null} it follows that ${\rm Null}~ K_{\mathcal F} = \emptyset$ and so by 
	the Nakai-Moishezon criterion for ampleness we see that $K_{\mathcal F}$ is ample.
%
\end{proof}

\section{Family of leaves of an algebraically integrable foliation}

Let $X$ be a normal projective variety and let 
$\mathcal F$ be an algebraically integrable foliation
on $X$.  The following construction follows from \cite[Lemma 3.2]{AD13}:

\begin{construction}
\label{c_fam} There exists a closed subvariety $Z \subset {\rm Chow}(X)$
whose general point parametrises the closure of a leaf of $X$.
Let $p_X\colon X\times Z \rightarrow X$ and $p_Z \colon X \times Z \rightarrow Z$
be the two projections.

If we let $U \subset X \times Z$ to be the universal cycle and $n\colon \widehat{X} \rightarrow U$ be the normalisation, then  we have
\begin{enumerate}
\item a birational morphism $\beta\colon \widehat{X} \rightarrow X$ where $\beta = p_X\vert_{U} \circ n$;

\item 
an equidimensional  contraction
 $f\colon \widehat{X} \rightarrow \widehat{Z}$ where $\widehat{Z}$ is the normalisation of $Z$; and 

\item a foliation $\widehat{\mathcal F} \coloneqq \beta^{-1}\mathcal F$, which is induced by 
$f$.
\end{enumerate}
\end{construction}

\begin{lemma}
\label{lem_neg_disc}
Set up as above. Suppose
that $K_{\mathcal F}$ is $\mathbb Q$-Cartier.  

Then
we may write $K_{\widehat{\mathcal F}}+F \sim_{\mathbb Q} \beta^*K_{\mathcal F}$ where $F \geq 0$
is $\beta$-exceptional.
Moreover, if $E$ is a $\beta$-exceptional divisor which is not $\widehat{\mathcal F}$-invariant
then $E$ is contained in ${\rm Supp}\, F$.
\end{lemma}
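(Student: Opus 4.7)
My plan has three steps: first exhibit a $\beta$-exceptional $\mathbb Q$-divisor $F$ satisfying the claimed linear equivalence, then argue $F \geq 0$ by reducing to Lemma \ref{lem_uniqueness}(4), and finally deduce the support statement from the description of $\widehat{\mathcal F}$ as induced by $f\colon \widehat X \to \widehat Z$ in Construction \ref{c_fam}.

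For the first step, since $\widehat{\mathcal F} = \beta^{-1}\mathcal F$ agrees with $\beta^{*}\mathcal F$ over the dense open set $V \subset X$ on which $\beta$ is an isomorphism, $K_{\widehat{\mathcal F}}$ and $\beta^{*}K_{\mathcal F}$ coincide on $\beta^{-1}(V)$, and I set $F \coloneqq \beta^{*}K_{\mathcal F} - K_{\widehat{\mathcal F}}$, which is then a $\beta$-exceptional $\mathbb Q$-divisor. To show $F \geq 0$, I would construct an integrable distribution $\widehat{\mathcal F}^{\circ}$ on $\widehat X$ whose Pfaff field targets $\mathcal O_{\widehat X}(\beta^{*}K_{\mathcal F})$ and whose associated foliation (in the sense of Lemma \ref{lem_uniqueness}(4)) is $\widehat{\mathcal F}$. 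Starting from the Pfaff field $\phi\colon \Omega_{X}^{r} \to \mathcal O_{X}(K_{\mathcal F})$ of $\mathcal F$, the pullback $\beta^{*}\phi\colon \beta^{*}\Omega_{X}^{r} \to \mathcal O_{\widehat X}(\beta^{*}K_{\mathcal F})$ together with the natural morphism $\alpha\colon \beta^{*}\Omega_{X}^{r} \to \Omega_{\widehat X}^{r}$ (which is an isomorphism at the generic point of $\widehat X$) should factor to a morphism $\hat\phi\colon \Omega_{\widehat X}^{r} \to \mathcal O_{\widehat X}(\beta^{*}K_{\mathcal F})$; since the cokernel of $\alpha$ is torsion and supported on $\beta$-exceptional divisors while the target is a torsion-free line sheaf, the extension is forced to kill this torsion and is therefore unique. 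Then Lemma \ref{lem_uniqueness}(4) provides a canonical effective divisor $B$ with $K_{\widehat{\mathcal F}} + B \sim K_{\widehat{\mathcal F}^{\circ}} \sim \beta^{*}K_{\mathcal F}$, yielding $F = B \geq 0$.

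For the final claim, since $\widehat{\mathcal F}$ is induced by the equidimensional morphism $f$, every $\widehat{\mathcal F}$-invariant prime divisor is a component of $f^{-1}(D)$ for some prime divisor $D \subset \widehat Z$. A non-invariant $\beta$-exceptional divisor $E$ must therefore dominate $\widehat Z$ under $f$, so at the generic point of $E$ the tangent sheaf $T_{\widehat{\mathcal F}}$ contains a vector transverse to $E$. Combined with the fact that $\beta$ contracts $E$ to a subvariety of codimension at least two in $X$, this should force the lifted Pfaff field $\hat\phi$ to vanish along $E$ to positive order, translating into a strictly positive coefficient of $E$ in the saturation divisor $B = F$. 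The main obstacle is verifying rigorously that $\hat\phi$ lifts with the correct target $\mathcal O_{\widehat X}(\beta^{*}K_{\mathcal F})$ (possibly after absorbing effective $\beta$-exceptional corrections into $F$) and that the vanishing order along a non-invariant exceptional $E$ is strictly positive; this step requires a careful local coordinate analysis at generic points of $\beta$-exceptional divisors, distinguishing invariant from non-invariant components, possibly invoking Lemma \ref{lem_sat_pfaff} applied to $\psi \circ \alpha$ and $\beta^{*}\phi$ where $\psi$ is the saturated Pfaff field of $\widehat{\mathcal F}$.
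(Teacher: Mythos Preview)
The paper's own proof is simply a citation to \cite[Section 3.6]{Druel21} and \cite[Proposition 4.17]{Druel21}, so there is no in-house argument to compare with. Your proposal attempts an actual proof, but the central step has a genuine gap.

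The problem is in your second step, the construction of $\hat\phi\colon \Omega^r_{\widehat X}\to \mathcal O_{\widehat X}(\beta^*K_{\mathcal F})$ factoring $\beta^*\phi$ through $\alpha$. The fact that $\operatorname{coker}\alpha$ is torsion and the target is torsion-free gives you \emph{uniqueness} of such a factorisation if it exists, but not existence: applying $\operatorname{Hom}(-,L)$ to $0\to\operatorname{im}\alpha\to\Omega^r_{\widehat X}\to T\to 0$ shows the obstruction lives in $\operatorname{Ext}^1(T,L)$, which need not vanish. Concretely, take $X$ smooth of dimension two, $\mathcal F$ smooth at a point $p$ with a local generator $\partial_y$, and let $\beta$ be the blow-up of $p$. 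In the chart $x=u,\,y=uv$ one finds $\beta^*\phi(\beta^*dy)=1$ while $\alpha(\beta^*dy)=v\,du+u\,dv$; any factorisation would force $\hat\phi(dv)=1/u$, which is not regular. Thus your construction fails for an arbitrary birational $\beta$, and since your argument never invokes any special feature of Construction~\ref{c_fam}, it cannot be repaired as written. (Note that for general birational $\beta$ the inequality $F\ge 0$ is simply false, so something specific to the family of leaves is unavoidable.)

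What Druel's argument actually uses is the factorisation $\beta = p_X|_U\circ n$ through the universal cycle $U\subset X\times Z$. The foliation $p_X^{-1}\mathcal F$ on $X\times Z$ has canonical class $p_X^*K_{\mathcal F}$, and $U$ is $p_X^{-1}\mathcal F$-invariant, so one restricts the Pfaff field to $U$ and then lifts to the normalisation $\widehat X$ via Proposition~\ref{lem_tensor_lift}; this produces a Pfaff field $\Omega^r_{\widehat X}\to \mathcal O_{\widehat X}(\beta^*K_{\mathcal F})$ directly, without any factorisation through $\alpha$. The ``moreover'' clause is then handled by a separate argument exploiting that a non-invariant exceptional $E$ dominates $\widehat Z$ while $\beta(E)$ has codimension at least two in $X$; your sketch of this last point is in the right spirit but would need the correctly constructed $\hat\phi$ as input.
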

\begin{proof}
This is 
\cite[Section 3.6]{Druel21} 
and \cite[Proposition 4.17]{Druel21}.
\end{proof}

\begin{lemma}
\label{lem_comparison_ram}
Let  $f\colon X \rightarrow Z$ be an equidimesional 
contraction between normal varieties
and let $\mathcal F$ and $\mathcal G$ be foliations on $X$ and  $Z$ respectively such that
$\mathcal F = f^{-1}\mathcal G$.  Let $\mathcal H$ be the foliation induced by $X \rightarrow Z$.

Then 
\[
K_{\mathcal H} = (K_{\mathcal F}-f^*K_{\mathcal G})-R(f)_{{\rm n-inv}}
\]
where $R(f)_{{\rm n-inv}}$ denotes the part of the ramification divisor $R(f)$ of $f$ which is not $\cal F$-invariant. 
\end{lemma}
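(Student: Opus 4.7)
The plan is to establish the claimed identity as an equality of $\mathbb{Q}$-divisor classes by comparing coefficients at the generic point of every prime divisor of $X$. Using the formula $K_{\mathcal{H}} \sim K_{X/Z} - R(f)$ for the foliation $\mathcal{H}$ induced by the equidimensional contraction $f$ (recalled earlier in the paper), the lemma is equivalent to the identity
\[
K_{\mathcal{F}} \;\sim\; K_{X/Z} + f^*K_{\mathcal{G}} - R(f)_{\mathrm{inv}},
\]
which is what I would actually prove. First, over the open subset $U \subset X$ on which $f$ is smooth and $\mathcal{G}$ is a subbundle, the pullback description $T_{\mathcal{F}} = (df)^{-1}(f^*T_{\mathcal{G}})$ yields a short exact sequence $0 \to T_{\mathcal{H}} \to T_{\mathcal{F}} \to f^*T_{\mathcal{G}} \to 0$, and taking determinants gives the identity on $U$, with no contribution from $R(f)$.

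Next I would analyse prime divisors $D \subset X$ where the $R(f)$ correction can intervene. Such $D$ must lie in the ramification locus of $f$, so $f(D) = D_Z$ is a prime divisor of $Z$ and the ramification index $e \geq 2$ contributes $(e-1)D$ to $R(f)$. Since $T_{X/Z} \subseteq T_{\mathcal{F}}$, the vertical directions are automatically tangent to $\mathcal{F}$, and a direct check shows that $D$ is $\mathcal{F}$-invariant if and only if $D_Z$ is $\mathcal{G}$-invariant. Consequently $R(f)_{\mathrm{inv}}$ picks up precisely $(e-1)D$ when $D_Z$ is $\mathcal{G}$-invariant and nothing otherwise, matching the coefficient we need to produce on the right-hand side.

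The remaining and delicate part of the argument is the local computation at the generic point of a ramified divisor $D$, which is where I expect the main obstacle. I would pick analytic coordinates with $D_Z = \{t = 0\}$ on $Z$ and $f^*t = u\,s^e$ on $X$, where $s$ is a local parameter for $D$ and $u$ is a unit, and then compute the saturation of the image of $T_{\mathcal{F}}$ in $f^*T_Z$ and compare it with $f^*T_{\mathcal{G}}$. When $D_Z$ is $\mathcal{G}$-invariant, every generator of $T_{\mathcal{G}}$ has $\partial_t$-coefficient in the ideal $(t)$, which pulls back to $(s^e)$ and forces a saturation correction of exactly $(e-1)D$ in the determinant computation, matching the contribution of $R(f)_{\mathrm{inv}}$. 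When $D_Z$ is not $\mathcal{G}$-invariant, some generator of $T_{\mathcal{G}}$ has $\partial_t$-coefficient a unit and $f^*T_{\mathcal{G}}$ is already saturated along $D$, yielding no correction. In both cases the computed coefficient agrees with the one predicted by the displayed identity, which then gives the lemma after rearrangement via $K_{\mathcal{H}} \sim K_{X/Z} - R(f)$.
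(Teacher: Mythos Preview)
Your proposal is correct and follows the same reduction as the paper: check the identity in codimension one, where everything may be assumed smooth. The paper's proof simply observes this and then invokes Formula~(2.1) of \cite[\S 2.9]{Druel17}, which is exactly the identity $K_{\mathcal F}\sim K_{X/Z}+f^*K_{\mathcal G}-R(f)_{\mathrm{inv}}$ you set out to verify; your local computation at ramified primes (distinguishing the cases $D_Z$ $\mathcal G$-invariant or not) is an explicit rederivation of that formula rather than a different argument.
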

\begin{proof}
The desired equality may be checked away from a subset of codimension at least two and so, without loss of generality,
we may assume that $X, Z, \mathcal H, \mathcal F$ and  $\mathcal G$
are smooth.
We may then apply Formula (2.1) in \cite[\S 2.9]{Druel17} to conclude.
\end{proof}

%

\begin{theorem}
\label{thm_alg_almost_hol}
Let $X$ be a $\mathbb Q$-factorial klt projective variety and let $\mathcal F$ be a foliation on $X$.
Let  $\mathcal H$ be the algebraic part of $\mathcal F$ and let  $\beta\colon \widehat{X} \rightarrow X$ be the morphism guaranteed 
to exist by Construction \ref{c_fam}.

Then $a(E,\mathcal H)\ge a(E,\mathcal F)$ for any $\beta$-exceptional prime divisor $E$ which is not $\beta^{-1}\cal H$-invariant. 
In particular, if $\cal F$ admits canonical singularities, then $\mathcal H$ is induced by an almost holomorphic map.
\end{theorem}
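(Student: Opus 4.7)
The plan is to prove the main discrepancy inequality first; the ``in particular'' statement then follows directly.

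For the main inequality, I would apply Lemma \ref{lem_comparison_ram} on $\wh X$ to the foliations $\wh{\cal F}=f^{-1}\cal G$ and $\wh{\cal H}$ induced by $f$, obtaining
\[
K_{\wh{\cal F}}-K_{\wh{\cal H}}=f^*K_{\cal G}+R(f)_{\rm n-inv}.
\]
Every component of the right-hand side is vertical for $f$ and hence $\wh{\cal H}$-invariant. Let $E$ be a $\beta$-exceptional prime divisor which is not $\wh{\cal H}$-invariant. By equidimensionality of $f$, any $\wh{\cal H}$-invariant divisor is a union of fibre components of $f$, so $E$ must be horizontal and $f(E)=\wh Z$. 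In particular, $E$ is not contained in the support of the right-hand side above, so the coefficient of $E$ there vanishes. Combining with the pullback formulae $K_{\wh{\cal F}}=\beta^*K_{\cal F}+\sum a(E',\cal F)E'$ and $K_{\wh{\cal H}}=\beta^*K_{\cal H}+\sum a(E',\cal H)E'$, this yields
\[
a(E,\cal H)-a(E,\cal F)=m_E\beta^*(K_{\cal F}-K_{\cal H}).
\]

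The remaining step is to show that the right-hand side is non-negative. Pushing down the formula above via $\beta_*$ shows that $K_{\cal F}-K_{\cal H}$ is supported on $\cal H$-invariant divisors of $X$. A general leaf $L=\beta(L_z)$ of $\cal H$, obtained as the image of a general fibre $L_z=f^{-1}(z)$, is disjoint from each such $\cal H$-invariant divisor, since $L$ corresponds to a general point of $\wh Z$ while the $\cal H$-invariant divisors in question come from proper subvarieties of $\wh Z$. On the other hand, since $E$ dominates $\wh Z$ via $f$, the intersection $L_z\cap E$ is a non-empty divisor in $L_z$ for general $z$, so $\beta(E)\cap L\ne\emptyset$. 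This forces $\beta(E)$ not to be contained in any component of the support of $K_{\cal F}-K_{\cal H}$, and hence $m_E\beta^*(K_{\cal F}-K_{\cal H})=0$, completing the proof of the inequality (which in fact holds as an equality).

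For the ``in particular'' statement, assume $\cal F$ has canonical singularities, so that $a(E,\cal F)\ge 0$ for every $\beta$-exceptional divisor $E$. If some $\beta$-exceptional $E$ were not $\wh{\cal H}$-invariant, the inequality just proven would force $a(E,\cal H)\ge 0$, contradicting Lemma \ref{lem_neg_disc}, which gives $a(E,\cal H)<0$ for such $E$. Hence every $\beta$-exceptional divisor must be $\wh{\cal H}$-invariant, and by equidimensionality of $f$ none dominates $\wh Z$; this is precisely the condition that the rational map $X\dashrightarrow \wh Z$ defining $\cal H$ is almost holomorphic. The main technical difficulty lies in the verification that $\beta(E)$ is not contained in the support of $K_{\cal F}-K_{\cal H}$, which requires carefully combining the $\cal H$-invariance of this support with the positioning of $E$ with respect to the family of leaves underlying the construction of $\beta$.
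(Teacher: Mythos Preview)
Your reduction to the identity
\[
a(E,\mathcal H)-a(E,\mathcal F)=m_E\beta^*(K_{\mathcal F}-K_{\mathcal H})
\]
is correct and is exactly the setup the paper uses. The gap is in the step where you claim this quantity vanishes. Your assertion that a general leaf $L=\beta(L_z)$ is disjoint from every $\mathcal H$-invariant divisor $P$ is false: although the strict transform $\widehat P$ is vertical for $f$ and hence misses a general fibre $L_z$, the full preimage $\beta^{-1}(P)$ typically contains horizontal $\beta$-exceptional components, so $\beta(L_z)$ meets $P$. (Already for the pencil of lines through a point $p\in\mathbb P^2$, every leaf passes through $p$, and $p$ lies on every $\mathcal H$-invariant divisor.) In fact the stronger conclusion you draw, that $\beta(E)\not\subset\operatorname{Supp}(K_{\mathcal F}-K_{\mathcal H})$ and hence that equality holds, is false in general: take $X=\mathbb P^4$, let $\mathcal H$ be induced by projection from a line $\ell$ to $\mathbb P^2$, and let $\mathcal G$ be a generic degree $d\ge 3$ foliation on $\mathbb P^2$ (purely transcendental by Jouanolou). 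Then $\widehat X=\mathrm{Bl}_\ell\mathbb P^4$, the unique exceptional divisor $E$ is horizontal, and a direct computation gives $a(E,\mathcal H)=-1$ while $a(E,\mathcal F)=1-d$, so the difference is $d-2>0$; moreover $\beta(E)=\ell$ is contained in the $\mathcal H$-invariant hyperplane supporting $K_{\mathcal F}-K_{\mathcal H}$.

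What is missing is a genuinely global input: the paper uses that $K_{\widehat{\mathcal G}}$ is pseudo-effective because $\widehat{\mathcal G}$ is purely transcendental (Campana--P\u aun), so that $\widehat f^*K_{\widehat{\mathcal G}}+R(\widehat f)_{\rm n-inv}$ is pseudo-effective on $\widehat X$. Combining this with the negativity lemma forces any $\beta$-exceptional $\Sigma$ with $m_\Sigma(G-F)>0$ to lie in the stable base locus of this divisor (plus a small ample), which is vertical for $f$; hence such $\Sigma$ are $\widehat{\mathcal H}$-invariant. Your component-by-component approach cannot succeed without this positivity, since $K_{\mathcal F}-K_{\mathcal H}$ need not be effective and its support can genuinely contain $\beta(E)$. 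Once the main inequality is established, your argument for the ``in particular'' clause is correct and essentially identical to the paper's.
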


\begin{proof}
Using the same notation as in Construction \ref{c_fam}, note that $\cal H$ is induced by the rational map  $f \circ \beta^{-1} \colon X\dashrightarrow \widehat{Z}$. 
Let $\widehat{\cal F}\coloneqq \beta^{-1}{\cal F}$ and let $\widehat{\cal H}\coloneqq \beta^{-1}{\cal H}$. 
By definition, 
there exists a purely
transcendental foliation $\widehat{\mathcal G}$ on $\widehat{Z}$ such that 
$\widehat{\mathcal F} = \widehat{f}^{-1}\widehat{\mathcal G}$.

We may write
 \[K_{\widehat{\mathcal F}} +F= \beta^*K_{\mathcal F}\]
and
\[K_{\widehat{\mathcal H}} +  G= \beta^*K_{\mathcal H} \]
where $F, G$ are $\beta$-exceptional and by Lemma \ref{lem_neg_disc}, we have that $G\ge 0$.

By Lemma \ref{lem_comparison_ram} we have 
\[
K_{\widehat{\mathcal H}}+{\widehat f}^*K_{\widehat{\mathcal G}}+R(\widehat f)_{{\rm n-inv}} = K_{\widehat {\cal F}},
\] where
$R(\widehat f)_{{\rm n-inv}}$ denotes the part of $R(\widehat f)$ which is not $\widehat{ \cal F}$-invariant. 
Thus, 
\[
  G-F \sim_{\mathbb Q,\beta} {\widehat f}^*K_{\widehat{\mathcal G}}+ R(\widehat f)_{{\rm n-inv}}. 
\]
Notice that $K_{\widehat{\mathcal G}}$
is pseudo-effective since $\widehat{\mathcal G}$ is purely transcendental (see \cite[Corollary 4.13]{CP19}). Let $A$ be an ample divisor on $\widehat{X}$ and let $\Sigma$ be a $\beta$-exceptional prime divisor.  Since $G-F$ is $\beta$-exceptional and ${\widehat f}^*K_{\widehat{\mathcal G}}+ R(\widehat f)_{{\rm n-inv}}$ is pseudo-effective, the negativity lemma (cf. \cite[Lemma 3.39]{KM98}) implies that if $m_\Sigma(G-F)>0$  then $\Sigma$ is contained in the stable base locus of ${\widehat f}^*K_{\widehat{\mathcal G}}+ R(\widehat f)_{{\rm n-inv}}+tA$ for any sufficiently small rational number $t>0$.  In particular, $\Sigma$ is $\widehat{\cal H}$-invariant. 


We deduce from this that for any $\beta$-exceptional prime divisor $E$ which is not $\widehat{\cal H}$-invariant we have 
$a(E, \mathcal H) \ge a(E, \mathcal F)$.

We now prove our second claim. Assume by contradiction that $\cal F$ admits canonical singularities 
but  $f$ is not almost holomorphic. Then  there exists a $\beta$-exceptional 
divisor $E$ which dominates $\overline{Z}$. 
By Lemma \ref{lem_neg_disc}, we have that $E$ is contained in the support of $G$. Thus, $a(E,\mathcal F)\le a(E,\mathcal H)<0$, a contradiction. 
\end{proof}

\begin{corollary}
\label{cor_main}
Let $X$ be a smooth projective variety and let $\mathcal F$ be a foliation on $X$ with  canonical 
singularities. 
Suppose that $-K_{\mathcal F}$ is nef and is not numerically trivial.

Then the algebraic part of $\mathcal F$ is induced by an equidimensional fibration.
\end{corollary}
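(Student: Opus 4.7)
The plan is to combine Theorem \ref{thm_alg_almost_hol} with the structural analysis of foliations with nef anticanonical class from \cite{Ou21}. Since $X$ is smooth (hence $\mathbb Q$-factorial and klt) and $\mathcal F$ has canonical singularities, Theorem \ref{thm_alg_almost_hol} applies directly and yields an almost holomorphic map $\pi\colon X\dashrightarrow Y$ inducing the algebraic part $\mathcal H$ of $\mathcal F$. There is therefore a dense open $U\subset X$ on which $\pi$ restricts to a proper morphism whose fibres are the closures of general leaves of $\mathcal H$; it remains to show $\pi$ extends to an everywhere-defined, equidimensional morphism.

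To carry this out I would pass to Construction \ref{c_fam}, obtaining a model $\beta\colon \widehat X\to X$ together with an equidimensional contraction $\widehat f\colon \widehat X\to \widehat Z$ with $\widehat{\mathcal H}=\beta^{-1}\mathcal H$. Because $\pi$ is already almost holomorphic, $\beta$ is an isomorphism over a dense open subset of $X$; moreover, by the proof of Theorem \ref{thm_alg_almost_hol}, the canonicity of $\mathcal F$ forces every $\beta$-exceptional prime divisor to be $\widehat{\mathcal H}$-invariant. It is thus enough to prove that no $\beta$-exceptional divisor can appear, equivalently that $\pi$ extends to a morphism whose fibres all have the expected dimension.

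This is precisely where the assumption that $-K_\mathcal{F}$ is nef and not numerically trivial enters, through the analysis of \cite{Ou21}. Roughly, the nefness of $-K_\mathcal{F}$ produces enough positivity on the general leaf of $\mathcal H$ — in the form of free rational curves tangent to $\mathcal F$ and Miyaoka-type generic semipositivity — that if some fibre of $\pi$ were to jump in dimension (equivalently, a $\beta$-exceptional divisor were centred on its image), one could deform rational curves on $\widehat X$ across the exceptional locus to produce a contradiction with the pseudo-effective thresholds on $\widehat Z$. The non-triviality hypothesis $-K_\mathcal{F}\not\equiv 0$ rules out the degenerate case in which no positivity is available to exploit. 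The main obstacle is identifying the precise positivity statement from \cite{Ou21} applicable in our setting and checking that the almost holomorphic map produced by Theorem \ref{thm_alg_almost_hol} satisfies its hypotheses; once this is done, the remainder of the argument is a formal combination of Theorem \ref{thm_alg_almost_hol} with the graph construction on $\widehat X$ and the discrepancy comparison of Lemma \ref{lem_neg_disc}.
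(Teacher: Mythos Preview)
Your outline has the right overall shape---apply Theorem \ref{thm_alg_almost_hol} to get an almost holomorphic map, then feed into \cite{Ou21}---but it is missing the key bridge that makes the application of \cite{Ou21} possible. The result you want from \cite{Ou21} (Corollary 1.4 there) concerns an \emph{algebraically integrable} foliation whose anticanonical class is nef; it applies to $\mathcal H$, not to $\mathcal F$. You never explain why $-K_{\mathcal H}$ is nef: your hypothesis only gives nefness of $-K_{\mathcal F}$, and there is no a priori inequality relating $K_{\mathcal F}$ and $K_{\mathcal H}$ in the direction you need. The paper closes this gap by invoking the argument of \cite[Claim 4.3]{MR3935068} to show that in fact $K_{\mathcal F}\equiv K_{\mathcal H}$ numerically; once this is established, $-K_{\mathcal H}$ is nef and \cite[Corollary 1.4]{Ou21} applies directly to yield the equidimensional fibration.

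Two further remarks. First, you should begin by observing that $\mathcal H$ is non-trivial: since $-K_{\mathcal F}$ is nef and not numerically trivial, $K_{\mathcal F}$ is not pseudo-effective, so by \cite[Corollary 4.13]{CP19} the foliation $\mathcal F$ is not purely transcendental. Second, once the numerical equivalence $K_{\mathcal F}\equiv K_{\mathcal H}$ is in hand, the detour through Construction \ref{c_fam}, the discrepancy comparison, and the informal deformation-of-rational-curves argument you sketch are all unnecessary; \cite{Ou21} does that work for you. Your sentence ``The main obstacle is identifying the precise positivity statement from \cite{Ou21}\dots'' is exactly right, and the missing identification is that the input to \cite{Ou21} must be $-K_{\mathcal H}$ nef, which requires the equivalence above.
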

\begin{proof}
By \cite[Corollary 4.13]{CP19}, the algebraic part $\mathcal H$ of $\mathcal F$ is non-trivial, and by 
Theorem \ref{thm_alg_almost_hol},  $\mathcal H$ is induced by an almost holomorphic map $f\colon X\dashrightarrow Z$.
Following the proof of \cite[Claim 4.3]{MR3935068} we see that $K_{\mathcal F} \equiv K_{\mathcal H}$ and, in particular $-K_{\cal H}$ is nef.
The result then follows by \cite[Corollary 1.4]{Ou21}.
\end{proof}


\bibliography{math.bib}
\bibliographystyle{alpha}

\end{document}